\documentclass[a4paper,12pt]{article}
\usepackage{amsmath}
\usepackage{amssymb}
\usepackage{amsthm}
\usepackage{latexsym}
\usepackage{amsmath}
\usepackage{amsthm}
\usepackage[dvipdfmx]{graphicx}
\usepackage{txfonts}
\usepackage{bm}
\usepackage{color}
\usepackage{epic,eepic}

\usepackage{geometry}
\geometry{left=27mm,right=27mm,top=27mm,bottom=27mm}

\numberwithin{equation}{section}

\newtheorem{theorem}{Theorem}[section]
\newtheorem{proposition}[theorem]{Proposition}
\newtheorem{corollary}[theorem]{Corollary}
\newtheorem{lemma}[theorem]{Lemma}
\newtheorem{remark}{Remark}[section]
\newtheorem{example}{Example}[section]

\newcommand{\todaye}{\the\year/\the\month/\the\day}
\newcommand{\finbox}{\hspace*{\fill}$\rule{0.2cm}{0.2cm}$}

\newcommand{\RR}{{\mathbb{R}}}
\newcommand{\ZZ}{{\mathbb{Z}}}

\newcommand{\veczero}{{\bf 0}}
\newcommand{\dom}{{\rm dom\,}}

\newcommand{\suppp}{{\rm supp}^{+}}
\newcommand{\suppm}{{\rm supp}^{-}}
\newcommand{\unitvec}[1]{\bm{1}_{#1}}

\newcommand{\argmin}{\arg \min}

\title{Scaling, Proximity, and Optimization of 
\\ Integrally Convex Functions%
\footnote{The extended abstract of this paper is included in the Proceedings
of the 27th International Symposium on Algorithms and Computation
(ISAAC), Sydney, December 12--14, 2016.
Leibniz International Proceedings in Informatics (LIPIcs),
{\bf 64} (2016), 57:1--57:12,
Dagstuhl Publishing.
}
}

\author{
Satoko Moriguchi%
\thanks{Tokyo Metropolitan University, satoko5@tmu.ac.jp},
Kazuo Murota%
\thanks{Tokyo Metropolitan University, murota@tmu.ac.jp}, 
Akihisa Tamura%
\thanks{Keio University, aki-tamura@math.keio.ac.jp},
Fabio Tardella%
\thanks{Sapienza University of Rome, fabio.tardella@uniroma1.it}
}

\date{March 29, 2017 (Revised December 12, 2017)}
\begin{document}

\maketitle

\begin{abstract}
In discrete convex analysis, the scaling and proximity properties for the class 
of L$^{\natural}$-convex functions were established 
more than a decade ago and have been used to design efficient minimization algorithms. 
For the larger class of integrally convex functions of $n$ variables, 
we show here that the scaling property only holds when 
$n \leq 2$, while a proximity theorem
can be established for any $n$, but only with a superexponential bound.
This is, however, sufficient to extend the classical
logarithmic complexity result for minimizing a discrete convex function
of one variable to the case of integrally convex functions
of any fixed number of variables.
\end{abstract}

\section{Introduction}
\label{SCintro}

The proximity-scaling approach is a fundamental technique
in designing efficient algorithms for discrete or combinatorial optimization.
For a function
$f: \mathbb{Z}^{n} \to \mathbb{R} \cup \{ +\infty  \}$
in integer variables and a positive integer $\alpha$, called 
a scaling unit, the $\alpha$-scaling of $f$ means the function 
$f^{\alpha}$
defined by $f^{\alpha}(x) = f(\alpha x) $ $(x \in \mathbb{Z}^{n})$.
A proximity theorem is a result guaranteeing that a (local) minimum 
of the scaled function $f^{\alpha}$
is close to a minimizer 
of the original function $f$. 
The scaled function $f^{\alpha}$ is simpler
and hence easier to minimize,
whereas the quality of the obtained minimizer of $f^{\alpha}$ 
as an approximation to the minimizer of $f$
is guaranteed by a proximity theorem.
The proximity-scaling approach consists in applying this idea
for a decreasing sequence of $\alpha$, often by halving the scale unit $\alpha$.
A generic form of a proximity-scaling algorithm may be described as follows,
where $K_{\infty}\ (> 0)$ denotes the $\ell_{\infty}$-size of 
the effective domain
$\dom f = \{ x \in \ZZ^{n} \mid f(x) < +\infty \}$ and $B(n,\alpha)$ 
denotes the proximity bound in $\ell_{\infty}$-distance for $f^{\alpha}$.

\begin{tabbing}     
\= {\bf Proximity-scaling algorithm}%
\\
\> \quad  S0: 
   \= Find an initial vector $x$ with $f(x) < +\infty$, and set 
   $\alpha := 2\sp{\lceil \log_{2} K_{\infty} \rceil}$.
\\
\> \quad  S1:
   \>  Find an integer vector $y$ with   $\| \alpha y \|_{\infty} \leq B(n,\alpha)$
    that is a (local) minimizer of  
\\ \> \>
      $\tilde f(y) = f(x + \alpha y)$,
    and set $x:= x+ \alpha y$.  \\
\> \quad  S2: 
    \> If $\alpha = 1$, then stop \
       ($x$ is a minimizer of $f$).             
\\
\> \quad  S3: 
  \> Set  $\alpha:=\alpha/2$, and go to S1.  
\end{tabbing}
The algorithm consists of $O(\log_{2} K_{\infty})$ scaling phases.
This approach has been particularly successful 
for resource allocation problems \cite{Hoc07,HS90,IK88,KSI13}
and for convex network flow problems
(under the name of ``capacity scaling'') \cite{AMO93,IMM05submflow,IS02}.
Different types of proximity theorems have also been investigated: 
proximity between integral and real optimal solutions 
\cite{HS90,Tami93,Tami09}, among others.
For other types of algorithms of nonlinear integer optimization,
see, e.g., \cite{HKLW10}. 

In discrete convex analysis \cite{Mdca98,Mdcasiam,Mbonn09,Mdcaeco16},
a variety of discrete convex functions are considered.
A {\em separable convex} function is a function
$f: \ZZ^{n} \to \RR \cup \{ +\infty \}$
that can be represented as
$f(x) = \varphi_{1}(x_{1}) + \cdots + \varphi_{n}(x_{n})$,
where $x=(x_{1}, \ldots,x_{n})$,
with univariate discrete convex functions
$\varphi_{i}: \ZZ \to \RR \cup \{ +\infty \}$
satisfying 
$\varphi_{i}(t-1) + \varphi_{i}(t+1) \geq 2 \varphi_{i}(t)$
for all $t \in \ZZ$.

A function
$f: \ZZ^{n} \to \RR \cup \{ +\infty \}$
is called {\em integrally convex}
if its local convex extension 
$\tilde{f}: \RR^{n} \to \RR \cup \{ +\infty \}$  
is (globally) convex in the ordinary sense, where
$\tilde{f}$
is defined as the collection of convex extensions of $f$ in each 
unit hypercube 
$\{ x \in \RR\sp{n} \mid a_{i} \leq x_{i} \leq a_{i} + 1 \ (i=1,\ldots, n) \}$ 
with $a \in \ZZ^{n}$;
see Section~\ref{SCintcnvfn} for precise statements.

A function
$f: \ZZ^{n} \to \RR \cup \{ +\infty \}$
is called
{\em L$^{\natural}$-convex}
if it satisfies 
one of the equivalent conditions in 
Theorem~\ref{THlnatcond} below.
For $x,y \in \ZZ^{n}$, 
$x \vee y$ and $x \wedge y$ denote
the vectors of componentwise maximum and minimum of $x$ and $y$, respectively.
Discrete midpoint convexity of $f$ for $x,y \in \ZZ^{n}$ means 
\begin{equation} \label{lnatfmidconv}
 f(x) + f(y) \geq
    f \left(\left\lceil \frac{x+y}{2} \right\rceil\right) 
  + f \left(\left\lfloor \frac{x+y}{2} \right\rfloor\right) ,
\end{equation}
where $\lceil \cdot \rceil$ and $\lfloor \cdot \rfloor$ denote
the integer vectors obtained by
componentwise rounding-up and rounding-down to the nearest integers,
respectively.
We use the notation $\bm{1}=(1,1,\ldots, 1)$
and 
$\unitvec{i}$ for the $i$-th unit vector 
$(0,\ldots,0, \overset{\overset{i}{\vee}}1,0,\ldots,0)$,
with the convention $\unitvec{0}=\bm{0}$.

\begin{theorem}
[\protect{\cite{FT90, FM00,Mdcasiam}}]  \label{THlnatcond}
For $f: \ZZ^{n} \to \RR \cup \{ +\infty \}$
the following conditions, {\rm (a)} to {\rm (d)}, are equivalent:%
\footnote{
$\ZZ$-valued functions are treated in \cite[Theorem~3]{FM00},
but the proof is valid for $\RR$-valued functions.
} 

{\rm (a)}
$f$ is integrally convex and submodular: 
\begin{equation} \label{lnatfsubm}
 f(x) + f(y) \geq f(x \vee y) + f(x \wedge y) 
 \qquad  (x, y \in \ZZ^{n}). 
\end{equation}

{\rm (b)}
$f$ satisfies 
discrete midpoint convexity 
{\rm (\ref{lnatfmidconv})} for all $ x, y \in \ZZ^{n}$.

{\rm (c)}
$f$ satisfies 
discrete midpoint convexity 
{\rm (\ref{lnatfmidconv})} for all $ x, y \in \ZZ^{n}$
with $\| x-y \|_{\infty} \leq 2$, and 
the effective domain has the property:
$x, y \in \dom f   \Rightarrow   
   \left\lceil  (x+y)/2 \right\rceil ,
   \left\lfloor (x+y)/2 \right\rfloor \in \dom f $.

{\rm (d)}
$f$ satisfies translation-submodularity:
\begin{equation} \label{lnatftrsubm}
  f(x) + f(y) \geq f((x - \mu {\bf 1}) \vee y) 
                 + f(x \wedge (y + \mu {\bf 1}))
\qquad  (x, y \in \ZZ^{n}, \ 0 \leq \mu \in \ZZ).
\end{equation}
\vspace{-1.7\baselineskip}
\\
\finbox
\end{theorem}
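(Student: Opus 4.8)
The plan is to take discrete midpoint convexity, condition (b), as the hub and to prove the three equivalences (b)$\Leftrightarrow$(c), (b)$\Leftrightarrow$(a), (b)$\Leftrightarrow$(d) separately. The single computational fact underpinning everything is that, for $x,y\in\ZZ^{n}$ with $\|x-y\|_{\infty}\le 1$, the two rounded midpoints coincide with the lattice operations, $\lceil (x+y)/2\rceil = x\vee y$ and $\lfloor (x+y)/2\rfloor = x\wedge y$; one checks this coordinatewise by distinguishing $x_{i}-y_{i}\in\{-1,0,1\}$. Consequently, for such close pairs the midpoint inequality \eqref{lnatfmidconv} and the submodularity inequality \eqref{lnatfsubm} are literally the same inequality, which is what lets us pass between the ``midpoint'' form and the ``lattice'' form. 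All the remaining difficulty then lies in globalizing from close pairs to arbitrary pairs.

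The implication (b)$\Rightarrow$(c) is immediate: \eqref{lnatfmidconv} restricted to $\|x-y\|_{\infty}\le 2$ is a special case, and the domain condition holds because, for $x,y\in\dom f$, the left-hand side of \eqref{lnatfmidconv} is finite, so neither rounded midpoint can leave $\dom f$ (an $\RR$-valued left-hand side cannot dominate a right-hand side that is $+\infty$). The real content is (c)$\Rightarrow$(b), which I would prove by induction on $\|x-y\|_{\infty}$. The base case $\|x-y\|_{\infty}\le 2$ is assumed; for the inductive step one inserts intermediate lattice points between $x$ and $y$—kept inside $\dom f$ precisely by the domain condition—so as to express the desired inequality as a sum of a local instance of \eqref{lnatfmidconv} and instances of strictly smaller $\ell_{\infty}$-diameter covered by the induction hypothesis. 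Arranging these intermediate points and orienting the telescoped inequalities so that they collapse exactly to \eqref{lnatfmidconv}, while carrying along the finiteness bookkeeping forced by the $+\infty$ values, is the main obstacle of the entire theorem.

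For (b)$\Leftrightarrow$(a) I would argue as follows. Submodularity \eqref{lnatfsubm} for close pairs is free from the coordinatewise identity above, and its extension to all pairs is an analogous globalization induction; conversely, submodularity is the $\mu=0$ heart of the midpoint inequality. Integral convexity is handled through its local characterization (Favati--Tardella): it is equivalent to a convexity test on pairs with $\|x-y\|_{\infty}=2$, and for such a pair the true midpoint $(x+y)/2$ is half-integral with $\lceil (x+y)/2\rceil$ and $\lfloor (x+y)/2\rfloor$ as its two nearest integer points, whose average is exactly $(x+y)/2$. Thus the $\|x-y\|_{\infty}\le 2$ part of \eqref{lnatfmidconv} yields the integral-convexity test directly, and together with submodularity this is precisely the content of (a); in the reverse direction, integral convexity plus submodularity reproduces the local midpoint inequality, which the globalization of the previous paragraph upgrades to all pairs.

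Finally, for (b)$\Leftrightarrow$(d) the clarifying device is the lift $g(x_{0},x_{1},\dots,x_{n})=f(x_{1}-x_{0},\dots,x_{n}-x_{0})$ on $\ZZ^{n+1}$. A direct coordinatewise computation shows that translation-submodularity \eqref{lnatftrsubm} of $f$, ranging over $0\le\mu\in\ZZ$, is exactly ordinary submodularity \eqref{lnatfsubm} of $g$, using the invariance $g(z+\vecone)=g(z)$ to reduce to the case where one argument has a zero in the new coordinate. Since $g$ is $\vecone$-invariant, restricting its midpoint inequality to the slice $x_{0}=0$ returns precisely \eqref{lnatfmidconv} for $f$; so (b)$\Leftrightarrow$(d) runs parallel to (b)$\Leftrightarrow$(a), again bottoming out in the one globalization step (equivalently, in the statement that an $\vecone$-invariant submodular $g$ is discretely midpoint convex). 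I therefore expect that globalization—the induction on $\ell_{\infty}$-diameter in the second paragraph—to be the one genuinely hard ingredient, with every other step being the coordinatewise identity of the first paragraph, a substitution, or the Favati--Tardella local test for integral convexity.
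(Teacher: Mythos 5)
First, a point of reference: the paper does not prove Theorem~\ref{THlnatcond} at all --- it is quoted as a known result from \cite{FT90}, \cite{FM00}, \cite{Mdcasiam} (the footnote points to \cite[Theorem~3]{FM00}), so your proposal has to stand on its own. As it stands, it does not. Your architecture is the standard one from the literature: the coincidence $\lceil (x+y)/2\rceil = x\vee y$, $\lfloor (x+y)/2\rfloor = x\wedge y$ for $\|x-y\|_{\infty}\leq 1$ is correct, the direction (b)$\Rightarrow$(c) with the finiteness argument is correct, and the lift $g(x_{0},x)=f(x-x_{0}\bm{1})$ does convert translation-submodularity \eqref{lnatftrsubm} of $f$ into submodularity of $g$. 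But all three of your equivalences are funneled into the single ``globalization'' induction, and that induction is exactly the step you never carry out --- you yourself call it ``the main obstacle of the entire theorem.'' Naming the obstacle is not overcoming it, and the telescoping you hint at does not close as stated: writing $u=\lceil (x+y)/2\rceil$, $v=\lfloor (x+y)/2\rfloor$ and applying the induction hypothesis to the closer pairs $(x,u)$ and $(v,y)$ (or $(x,v)$ and $(u,y)$) produces new rounded midpoints that do not recombine into $u$ and $v$, so the inequalities do not sum to \eqref{lnatfmidconv}. This is precisely where \cite{FM00} and \cite[Section~7.2]{Mdcasiam} do real work. The same missing core reappears in your treatment of (d): the claim that a $\bm{1}$-invariant submodular function is discretely midpoint convex is the theorem itself restated in L-convex form, not a lemma you may invoke; and the ``extension of submodularity to all pairs'' needed for (b)$\Rightarrow$(a) is a third instance of it.

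Second, one step fails as written even granting the induction. To obtain integral convexity in (b)$\Rightarrow$(a) you invoke the Favati--Tardella local criterion, i.e., Theorem~\ref{THfavtarProp33}, but that theorem carries the hypothesis that $\dom f$ is an integrally convex \emph{set}. From (b) you only know that $\dom f$ is closed under the rounded-midpoint operations (your finiteness argument); that such a set is integrally convex is the set version of (b)$\Rightarrow$(a) --- the statement that an ${\rm L}^{\natural}$-convex set is integrally convex --- and must be established before Theorem~\ref{THfavtarProp33} can be applied. Until the (c)$\Rightarrow$(b) induction and this domain lemma are actually proved, the proposal is a correct roadmap of the known proof strategy, but not a proof.
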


A simple example to illustrate the difference between integrally convex and
L$^{\natural}$-convex functions can be provided in the case of quadratic
functions. Indeed, for an $n \times n$ symmetric matrix $Q$ and a vector
$p \in \RR^n$, the function $f(x) = x\sp{\top} Q x + p\sp{\top} x$ is integrally
convex whenever $Q$ is diagonally dominant with nonnegative diagonal elements, i.e.,
$q_{ii} \geq \sum_{j \not= i} |q_{ij}|$ for $i=1,\ldots,n$ \cite{FT90}.
On the other hand, $f$ is L$^{\natural}$-convex if and only if it is
diagonally dominant with nonnegative diagonal elements and $q_{ij} \leq 0$
for all $i \neq j$ \cite[Section 7.3]{Mdcasiam}.

A function
$f: \ZZ^{n} \to \RR \cup \{ +\infty \}$
is called
{\em M\/$^{\natural}$-convex}
if it satisfies an exchange property:
For any $x, y \in \dom f$ and any $i \in \suppp(x-y)$,
there exists $j \in \suppm(x-y) \cup \{ 0 \}$ such that  
\begin{equation} \label{mnatfnexc}
 f(x)+f(y) \geq 
  f(x - \unitvec{i} + \unitvec{j}) 
             + f(y + \unitvec{i} - \unitvec{j}) ,
\end{equation}
where, for $z \in \ZZ^{n}$, 
$\suppp(z) = \{ i \mid z_{i} > 0 \}$ and
$\suppm(z) = \{ j \mid z_{j} < 0 \}$.
It is known (and easy to see) that a function is separable convex if and only if it is both
${\rm L}^{\natural}$-convex and ${\rm M}^{\natural}$-convex.

Integrally convex functions 
constitute a common framework for discrete convex functions,
including separable convex,
${\rm L}^{\natural}$-convex and
${\rm M}^{\natural}$-convex functions  
as well as 
${\rm L}^{\natural}_{2}$-convex and 
${\rm M}^{\natural}_{2}$-convex functions \cite{Mdcasiam},
and BS-convex and UJ-convex functions \cite{Fuj14bisubmdc}.
The concept of integral convexity is
used in formulating discrete fixed point theorems
\cite{Iim10,IMT05,Yan09fixpt}, 
and designing solution algorithms for discrete systems of nonlinear equations
\cite{LTY11nle,Yan08comp}.
In game theory 
the integral concavity of payoff functions 
guarantees the existence of a pure strategy equilibrium 
in finite symmetric games \cite{IW14}.

The scaling operation preserves ${\rm L}^{\natural}$-convexity,
that is,
if $f$ is ${\rm L}^{\natural}$-convex, then 
$f^{\alpha}$ is ${\rm L}^{\natural}$-convex.
${\rm M}^{\natural}$-convexity is subtle in this respect:
for an ${\rm M}^{\natural}$-convex function $f$, 
$f^{\alpha}$ remains ${\rm M}^{\natural}$-convex if $n \leq 2$, 
while this is not always the case if $n\geq 3$.

\begin{example} \rm \label{EXmnatdim3}
Here is an example to show that
${\rm M}^{\natural}$-convexity
is not preserved under scaling.
Let $f$ be the indicator function of the set 
$S =  \{ c_{1} (1,0,-1) +c_{2} (1,0,0) + c_{3} (0,1,-1) + c_{4} (0,1,0)
\mid c_{i} \in \{ 0,1 \} \ (i=1,2,3,4)  \} \subseteq \ZZ^{3}$.
Then $f$ is an ${\rm M}^{\natural}$-convex function,
but $f^{2}$ (=$f^{\alpha}$ with $\alpha=2$),
being the indicator function of $ \{ (0,0,0), (1,1,-1) \}$,
is not ${\rm M}^{\natural}$-convex.
This example is a reformulation of \cite[Note 6.18]{Mdcasiam}
for {\rm M}-convex functions to ${\rm M}^{\natural}$-convex functions.
\finbox
\end{example}

It is rather surprising that nothing is known about scaling for
integrally convex functions.
Example~\ref{EXmnatdim3} does not demonstrate the lack of scaling property 
of integrally convex functions, 
since $f^{2}$ above is integrally convex, though not ${\rm M}^{\natural}$-convex.

As for proximity theorems, the following facts are known for separable convex,
 ${\rm L}^{\natural}$-convex and ${\rm M}^{\natural}$-convex functions.
In the following three theorems we assume that 
$f: \mathbb{Z}^{n} \to \mathbb{R} \cup \{ +\infty  \}$, 
$\alpha$ is a positive integer, and $x^{\alpha} \in \dom f$.
It is noteworthy that the proximity bound is
independent of $n$ for separable convex functions, and 
coincides with  $n (\alpha -1)$, which is linear in $n$,
for ${\rm L}^{\natural}$-convex and ${\rm M}^{\natural}$-convex functions.

\begin{theorem}   \label{THsepfnproximity}
Suppose that $f$ is a separable convex function.
If $f(x^{\alpha}) \leq f(x^{\alpha} +  \alpha d)$ for all 
$d \in \{ \unitvec{i}, -\unitvec{i}  \  (1 \leq i \leq n) \} $,
then there exists a minimizer 
$x^{*}$ of $f$ with $\| x^{\alpha} - x^{*}  \|_{\infty} \leq \alpha -1$.
\end{theorem}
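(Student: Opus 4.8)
The plan is to exploit separability to reduce the $n$-dimensional statement to $n$ independent one-dimensional proximity claims, and then to settle the one-dimensional claim by using the monotonicity of the forward difference of a univariate discrete convex function. First I would observe that the hypothesis decomposes coordinatewise. Writing $f(x) = \varphi_{1}(x_{1}) + \cdots + \varphi_{n}(x_{n})$ and setting $s_{i} = x^{\alpha}_{i}$, the inequalities $f(x^{\alpha}) \leq f(x^{\alpha} \pm \alpha \unitvec{i})$ reduce, all other coordinates being held fixed, to $\varphi_{i}(s_{i}) \leq \varphi_{i}(s_{i} + \alpha)$ and $\varphi_{i}(s_{i}) \leq \varphi_{i}(s_{i} - \alpha)$ for every $i$. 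Hence it suffices to prove the following univariate fact: if $\varphi : \ZZ \to \RR \cup \{+\infty\}$ is discrete convex and $\varphi(s) \leq \varphi(s+\alpha)$ and $\varphi(s) \leq \varphi(s-\alpha)$ hold at a point $s \in \dom \varphi$, then $\varphi$ attains its minimum at some $t^{*}$ with $|s - t^{*}| \leq \alpha - 1$. Choosing such a $t^{*}_{i}$ for each $i$ and setting $x^{*} = (t^{*}_{1}, \ldots, t^{*}_{n})$ then yields, by separability, a global minimizer of $f$ with $\| x^{\alpha} - x^{*} \|_{\infty} \leq \alpha - 1$.

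For the univariate fact I would introduce the forward difference $g(t) = \varphi(t+1) - \varphi(t)$, which is nondecreasing in $t$ by discrete convexity. From $\varphi(s+\alpha) - \varphi(s) = \sum_{t=s}^{s+\alpha-1} g(t) \geq 0$, together with the fact that $g(s+\alpha-1)$ is the largest of these $\alpha$ terms, I obtain $g(s+\alpha-1) \geq 0$; symmetrically, $\varphi(s) - \varphi(s-\alpha) = \sum_{t=s-\alpha}^{s-1} g(t) \leq 0$ gives $g(s-\alpha) \leq 0$. Since $t$ is a minimizer of $\varphi$ exactly when $g(t-1) \leq 0 \leq g(t)$, the set of minimizers is an integer interval $[a,b]$, with $a$ the least $t$ satisfying $g(t) \geq 0$ and $b$ the greatest $t$ satisfying $g(t-1) \leq 0$. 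Then $g(s+\alpha-1) \geq 0$ forces $a \leq s + \alpha - 1$, while $g(s-\alpha) \leq 0$ forces $b \geq s - \alpha + 1$; consequently $[a,b]$ meets the window $[s-\alpha+1,\, s+\alpha-1]$, and any integer $t^{*}$ in the intersection is the desired minimizer.

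The main point to watch is the treatment of $+\infty$ values at the boundary of $\dom \varphi$: I would note that discrete convexity makes $\dom \varphi$ an interval containing $s$, so $g$ is well defined and nondecreasing in the extended reals, and the sign conditions $g(s-\alpha) \leq 0 \leq g(s+\alpha-1)$ already certify that a finite minimizer exists, which is exactly the minimizer asserted in the conclusion. Beyond handling these boundary cases cleanly, everything reduces to elementary bookkeeping, so I expect no real conceptual obstacle.
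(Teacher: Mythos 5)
Your proof is correct and follows essentially the same route as the paper: the paper's own proof also reduces to the univariate case via the coordinatewise characterization of minimizers of a separable function, and simply declares the $n=1$ case obvious. Your forward-difference argument (with $g(s-\alpha)\leq 0\leq g(s+\alpha-1)$ locating a minimizer in the window, and the noted care about $+\infty$ at the boundary of $\dom\varphi$) is a valid filling-in of exactly that ``obvious'' univariate step.
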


\begin{proof}
The statement is obviously true if $n=1$.
Then the statement for general $n$ follows easily from the fact that
$x^{*}$ is a minimizer of 
$f(x) = \varphi_{1}(x_{1}) + \cdots + \varphi_{n}(x_{n})$
if and only if, for each $i$,  $x^{*}_{i}$ is a minimizer of $\varphi_{i}$.
\end{proof}

\begin{theorem} [\protect{\cite{IS02}}; {\cite[Theorem 7.18]{Mdcasiam}}]  \label{THlfnproximity} 
Suppose that $f$ is an ${\rm L}^{\natural}$-convex function.
If $f(x^{\alpha}) \leq f(x^{\alpha} +  \alpha d)$ for all 
$d \in  \{ 0, 1 \}^{n} \cup \{ 0, -1 \}^{n}$,
then there exists a minimizer 
$x^{*}$ of $f$ with $\| x^{\alpha} - x^{*}  \|_{\infty} \leq n (\alpha -1)$.
\finbox
\end{theorem}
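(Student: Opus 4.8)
The plan is to read the hypothesis as an optimality condition for the scaled function and to reduce the proximity claim to a contradiction argument driven by submodularity. First I would note that, since shifting preserves ${\rm L}^{\natural}$-convexity and the scaling operation does too (as recalled just before Example~\ref{EXmnatdim3}), the function $g(z) := f(x^{\alpha} + \alpha z)$ is ${\rm L}^{\natural}$-convex. The hypothesis $f(x^{\alpha}) \leq f(x^{\alpha} + \alpha d)$ for all $d \in \{0,1\}^{n} \cup \{0,-1\}^{n}$ says exactly that $g(\veczero) \leq g(d)$ in these directions, i.e.\ $\veczero$ is a local minimizer of $g$ in the ${\rm L}^{\natural}$-sense. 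For ${\rm L}^{\natural}$-convex functions such local minimality is equivalent to global minimality, a local-to-global principle that is a direct consequence of discrete midpoint convexity (Theorem~\ref{THlnatcond}(b)): if some $w$ had $g(w) < g(\veczero)$, then applying (\ref{lnatfmidconv}) to $\veczero$ and a closest such $w$ would produce a strictly better point in $\{0,1\}^{n} \cup \{0,-1\}^{n}$. Hence $x^{\alpha}$ minimizes $f$ over the entire coset $x^{\alpha} + \alpha \ZZ^{n}$, that is, $f(x^{\alpha}) \leq f(x^{\alpha} + \alpha z)$ for all $z \in \ZZ^{n}$.

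Second, I would select among all minimizers of $f$ one, $x^{*}$, that is closest to $x^{\alpha}$ in $\| \cdot \|_{1}$, and argue by contradiction, assuming $\| x^{*} - x^{\alpha} \|_{\infty} \geq n(\alpha - 1) + 1$. Writing $v = x^{*} - x^{\alpha}$, I would split the coordinates into the positive support $P = \{ i : v_{i} > 0 \}$ and the negative support $N = \{ i : v_{i} < 0 \}$, to which the two halves $\{0,1\}^{n}$ and $\{0,-1\}^{n}$ of the hypothesis apply respectively. The aim is to use submodularity (\ref{lnatfsubm}), or equivalently translation-submodularity (\ref{lnatftrsubm}) with $\mu = \alpha$, applied to $x^{*}$ and a suitable coset point such as $x^{\alpha} + \alpha d$ with $d$ the characteristic vector of $P$: forming componentwise maxima and minima yields two new points whose values are controlled by $f(x^{*})$ and by the coset value $f(x^{\alpha})$, and from these one tries to extract either a coset point with value below $f(x^{\alpha})$ (contradicting the coset-minimality just established) or a minimizer strictly closer to $x^{\alpha}$ in $\| \cdot \|_{1}$ (contradicting the choice of $x^{*}$). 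The mechanism is transparent when $n = 1$: convexity of $g$ together with $f(x^{\alpha}) \leq f(x^{\alpha} \pm \alpha)$ forces every minimizer of $f$ into the interval $(x^{\alpha} - \alpha, \, x^{\alpha} + \alpha)$, giving the bound $\alpha - 1$.

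The step I expect to be the main obstacle is converting this exchange into the \emph{tight} linear bound $n(\alpha - 1)$. The difficulty is that $x^{\alpha}$ is only a coset minimizer, not a global one, so a single application of (\ref{lnatfsubm}) between $x^{*}$ and one coset point $p$ merely reproduces the inequalities $f(x^{*}) \leq f(x^{*} \vee p)$ and $f(x^{*}) \leq f(x^{*} \wedge p)$ and makes no progress on its own. To make genuine headway one must combine submodularity with the convexity of $g$ along each scaling direction — the relation $f(x^{\alpha}) \leq f(x^{\alpha} + \alpha d)$ makes $t \mapsto f(x^{\alpha} + t \alpha d)$ nondecreasing for integers $t \geq 0$ — and carry out the exchanges over all coordinates of $P$ and $N$ simultaneously, so that each coordinate can absorb a residual of at most $\alpha - 1$ before a strictly improving coset point or a strictly closer minimizer must appear. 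Organizing this bookkeeping so that the $\ell_{\infty}$-displacement is bounded by exactly $n(\alpha - 1)$, rather than by a larger quantity, is where the combinatorial structure of ${\rm L}^{\natural}$-convexity must be exploited in full; once the contradiction is reached, the assumed inequality fails and the desired bound $\| x^{\alpha} - x^{*} \|_{\infty} \leq n(\alpha - 1)$ follows.
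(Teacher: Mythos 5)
Your first paragraph is correct and matches the standard opening of the proof in the sources the paper cites (the paper itself states Theorem~\ref{THlfnproximity} without proof, referring to \cite{IS02} and \cite[Theorem~7.18]{Mdcasiam}): since translation and $\alpha$-scaling preserve ${\rm L}^{\natural}$-convexity, $g(z)=f(x^{\alpha}+\alpha z)$ is ${\rm L}^{\natural}$-convex, and the local-to-global principle for ${\rm L}^{\natural}$-convex functions (local optimality over $\{0,1\}^{n}\cup\{0,-1\}^{n}$ implies global optimality) gives $f(x^{\alpha})\leq f(x^{\alpha}+\alpha z)$ for all $z\in\ZZ^{n}$. This part is sound.

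The rest, however, is a plan rather than a proof, and the gap is exactly where the theorem lives. As you yourself observe, a single application of submodularity (\ref{lnatfsubm}) between a global minimizer $x^{*}$ and a coset point $p=x^{\alpha}+\alpha d$ yields $f(p)\geq f(x^{*}\vee p)+f(x^{*}\wedge p)-f(x^{*})$, which bounds the values at $x^{*}\vee p$ and $x^{*}\wedge p$ but produces neither a coset point below $f(x^{\alpha})$ nor a minimizer strictly closer to $x^{\alpha}$; so the contradiction you aim for never materializes from the tools you invoke. The missing ingredient — and the precise source of the factor $n(\alpha-1)$ — is a pigeonhole argument on the nested level sets of $v=x^{*}-x^{\alpha}$. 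Suppose some coordinate of $v$ exceeds $n(\alpha-1)$ (the negative side is symmetric under $x\mapsto -x$, which preserves ${\rm L}^{\natural}$-convexity). The sets $A_{k}=\{i \mid v_{i}\geq k\}$, $k=1,\ldots,\max_{i}v_{i}$, form a chain, hence take at most $n$ distinct values; since $\max_{i}v_{i}\geq n(\alpha-1)+1$, some nonempty set $A$ satisfies $A_{k}=A$ for at least $\alpha$ consecutive values of $k$. It is this repetition that allows translation-submodularity (\ref{lnatftrsubm}) (or discrete midpoint convexity) to be applied along the direction $\unitvec{A}$ for a full $\alpha$ steps, producing from $x^{*}$ a point $x^{*}-\unitvec{A}$ that is again a minimizer (using the coset minimality of $x^{\alpha}$ to control the correction terms), contradicting the choice of $x^{*}$ as a closest minimizer. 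Without this level-set/pigeonhole device, or an equivalent one, your bookkeeping cannot terminate at $n(\alpha-1)$: nothing in your setup even explains why the bound should be the product of $n$ (the number of distinct sets in a chain) and $\alpha-1$ (the slack each block can absorb before a full $\alpha$-step becomes available). So the proposal as written establishes coset minimality but not the proximity bound, which is the actual content of the theorem.
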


\begin{theorem}[\protect{\cite{MMS02}}; \protect{\cite[Theorem 6.37]{Mdcasiam}}]\label{THmfnproximity}
Suppose that $f$ is an ${\rm M}^{\natural}$-convex function.
If $f(x^{\alpha}) \leq f(x^{\alpha} + \alpha d)$ for all 
$d \in \{ \unitvec{i}, -\unitvec{i}  \  (1 \leq i \leq n),
 \  \unitvec{i} - \unitvec{j} \  (i \not= j) \} $,
then there exists a minimizer 
$x^{*}$ of $f$ with $\| x^{\alpha} - x^{*}  \|_{\infty} \leq n (\alpha -1)$.
\finbox
\end{theorem}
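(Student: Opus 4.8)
The statement is the ${\rm M}^{\natural}$-convex proximity theorem; although it is quoted here from \cite{MMS02, Mdcasiam}, let me sketch how I would reconstruct its proof. Writing $y = x^{\alpha}$, the plan is to argue by contradiction: assuming every minimizer of $f$ lies farther than $n(\alpha-1)$ from $y$, I would exhibit one of the scaled directions $d \in \{ \unitvec{i}, -\unitvec{i}, \unitvec{i}-\unitvec{j} \}$ along which $f$ strictly decreases over a step of length $\alpha$, contradicting the hypothesis $f(y) \le f(y + \alpha d)$. Concretely, choose a minimizer $x^{*}$ of $f$ that minimizes $\| x^{*} - y \|_{1}$, set $z = x^{*} - y$, and suppose $\| z \|_{\infty} > n(\alpha-1)$. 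By the symmetry of the direction set (reflecting coordinates) I may assume the maximum is attained at a positive coordinate, so there is $i_{0} \in \suppp(z)$ with $z_{i_{0}} \ge n(\alpha-1)+1$.

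The engine of the argument is the exchange property (\ref{mnatfnexc}) combined with the $\ell_{1}$-minimality of $x^{*}$. Applying (\ref{mnatfnexc}) to the pair $(x^{*}, y)$ and the index $i_{0} \in \suppp(x^{*}-y)$ yields some $j_{1} \in \suppm(x^{*}-y) \cup \{0\}$ with $f(x^{*} - \unitvec{i_{0}} + \unitvec{j_{1}}) + f(y + \unitvec{i_{0}} - \unitvec{j_{1}}) \le f(x^{*}) + f(y)$. Since $x^{*} - \unitvec{i_{0}} + \unitvec{j_{1}}$ is strictly closer to $y$ in $\ell_{1}$-distance, it cannot be a minimizer, so $f(x^{*} - \unitvec{i_{0}} + \unitvec{j_{1}}) > f(x^{*})$ and therefore $f(y + \unitvec{i_{0}} - \unitvec{j_{1}}) < f(y)$. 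Holding $x^{*}$ fixed and repeating the step with $y$ replaced by $y + \unitvec{i_{0}} - \unitvec{j_{1}}$, I build a path $y = y_{0}, y_{1}, \dots$ in which each move adds $\unitvec{i_{0}} - \unitvec{j_{k}}$ with $j_{k} \in \suppm(x^{*}-y_{k-1}) \cup \{0\}$, strictly decreases $f$, and raises the $i_{0}$-th coordinate by one. Because that coordinate only increases and stays $\le x^{*}_{i_{0}}$, the index $i_{0}$ remains in the positive support and no new index enters $\suppm(x^{*}-y_{k})$; hence the path runs for exactly $z_{i_{0}}$ steps, with every $j_{k} \in \suppm(z) \cup \{0\}$.

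This is where the factor $n$ enters. The compensating indices $j_{k}$ range over $\suppm(z) \cup \{0\}$, a set of at most $n$ values (as $i_{0} \notin \suppm(z)$), so the moves used lie in a set of at most $n$ vectors of the form $\unitvec{i_{0}} - \unitvec{j}$, with the convention $\unitvec{i_{0}} - \unitvec{0} = \unitvec{i_{0}}$. Since the path has $z_{i_{0}} \ge n(\alpha-1)+1$ moves, by pigeonhole a single direction $d^{*} = \unitvec{i_{0}} - \unitvec{j^{*}}$ recurs at least $\lceil (n(\alpha-1)+1)/n \rceil = \alpha$ times. Finally, because $f$ is integrally convex (being ${\rm M}^{\natural}$-convex), its local convex extension is convex and agrees with $f$ on $\ZZ^{n}$; restricting it to the line $\{ y + t\,d^{*} \}$ shows that $t \mapsto f(y + t\,d^{*})$ is convex on $\ZZ$. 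If the $\alpha$ occurrences of $d^{*}$ can be taken as the first $\alpha$ moves of the path, then $f(y + t\,d^{*})$ strictly decreases for $t = 0,1,\dots,\alpha$, so $f(y + \alpha d^{*}) < f(y)$, contradicting $f(y) \le f(y + \alpha d^{*})$ and proving the bound.

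The hard part is precisely the last clause: the path supplies the direction $d^{*}$ at least $\alpha$ times, but these occurrences are interleaved with other moves and issue from distinct, non-collinear base points, whereas the convexity of $t \mapsto f(y+t\,d^{*})$ is only useful if the descents occur consecutively from $y$ itself (equivalently, if the one-dimensional minimizer in direction $d^{*}$ lies at distance at least $\alpha$). Bridging this gap requires an exchangeability argument specific to ${\rm M}^{\natural}$-convex functions, showing that the elementary moves of a shortest monotone lattice path from $y$ toward $x^{*}$ may be permuted while preserving the non-increase of $f$, so that all $d^{*}$-moves can be brought to the front; alternatively, one can aim for a direct convexity estimate on the restricted minimizer. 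The remaining loose ends are routine: the symmetric case where $\| z \|_{\infty}$ is attained at a negative coordinate, and the degenerate subcase $j^{*}=0$, in which $d^{*} = \unitvec{i_{0}}$ is a pure coordinate direction.
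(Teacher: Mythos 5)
The paper itself does not prove Theorem~\ref{THmfnproximity} --- it is quoted from \cite{MMS02} and \cite[Theorem~6.37]{Mdcasiam} with no argument given --- so your attempt must stand on its own, and it does not: you have correctly identified, but not closed, the gap at the heart of the matter. The first half of your sketch is sound. With $x^{*}$ a minimizer of $f$ nearest to $y = x^{\alpha}$ in $\ell_{1}$-distance, applying (\ref{mnatfnexc}) to $(x^{*}, y_{k-1})$ yields $j_{k} \in \suppm(x^{*}-y_{k-1}) \cup \{0\} \subseteq \suppm(z) \cup \{0\}$; the point $x^{*} - \unitvec{i_{0}} + \unitvec{j_{k}}$ is strictly $\ell_{1}$-closer to $y$, hence not a minimizer, so indeed $f(y_{k}) < f(y_{k-1})$; the path has $z_{i_{0}} \geq n(\alpha-1)+1$ steps; and since $i_{0} \notin \suppm(z)$, the moves come from at most $n$ distinct directions, so the pigeonhole count giving a direction $d^{*} = \unitvec{i_{0}} - \unitvec{j^{*}}$ used at least $\alpha$ times is right. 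But the hypothesis you must contradict concerns the single line $\{ y + t d^{*} \mid t \in \ZZ \}$, whereas your $\alpha$ descents in direction $d^{*}$ are issued from $\alpha$ distinct, non-collinear base points $y_{k_{l}-1}$. Discrete convexity of $t \mapsto f(y + t d^{*})$ cannot bridge this by itself: what is needed is an inequality of the type
$f(y + \alpha d^{*}) - f(y) \leq \sum_{l=1}^{\alpha} \bigl[ f(y_{k_{l}}) - f(y_{k_{l}-1}) \bigr]$,
i.e., that increments of $f$ in the fixed direction $d^{*}$ taken at the shifted points dominate the increments along the line, or equivalently the path-reordering ("exchangeability") lemma you invoke without statement or proof. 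This aggregation step is precisely the substantive content of the proofs in the cited references, and its delicacy is underscored by the paper's own Examples~\ref{EXproxNG422} and~\ref{EXproxtigntM2}: already for sums of two M-convex functions (which are integrally convex and have ample exchange structure in each summand) every linear-in-$n$ bound fails, so the missing step must exploit ${\rm M}^{\natural}$-convexity of $f$ itself in an essential way after the pigeonhole. As written, your proposal is an honest reduction of the theorem to an unproved lemma, not a proof.

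A secondary slip: ``by the symmetry of the direction set (reflecting coordinates)'' suggests flipping the sign of an individual coordinate, but ${\rm M}^{\natural}$-convexity, unlike integral convexity (Proposition~\ref{PRintcnvinvar}~(3)), is \emph{not} preserved under componentwise sign inversion. The reduction to a positive maximal coordinate must instead use the simultaneous negation $x \mapsto -x$ of all coordinates, which does preserve ${\rm M}^{\natural}$-convexity (apply (\ref{mnatfnexc}) with the roles of the two points interchanged) as well as the $\alpha$-local minimality hypothesis; fortunately this is all you need, since $\| z \|_{\infty}$ is attained either at a positive or at a negative coordinate of $z$.
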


Based on the above results and their variants, efficient algorithms for minimizing
${\rm L}^{\natural}$-convex and ${\rm M}^{\natural}$-convex functions
have been successfully designed with the proximity-scaling approach
 (\cite{MMS02,MST11Mrelax,MT09Lrelax,Mdcasiam,Shimin04,Tam05scale}).
Proximity theorems are also available for 
${\rm L}^{\natural}_{2}$-convex and 
${\rm M}^{\natural}_{2}$-convex functions \cite{MT04prox}
and L-convex functions on graphs \cite{Hir15Lext,Hir16Lgraph}.
However, no proximity theorem has yet been proved for integrally convex functions.

\bigskip

The new findings of this paper are

\begin{itemize}
\item  
A ``box-barrier property'' (Theorem~\ref{THintcnvbox}), which allows us to 
restrict the search for a global minimum of an integrally convex function;

\item  
Stability of integral convexity under scaling when $n = 2$
(Theorem~\ref{THintcnvscdim2}),
and an example to demonstrate its failure 
when $n \geq 3$ (Example~\ref{EXscalingNG422});

\item  
A proximity theorem with a superexponential bound
 $\displaystyle  [(n+1)!/ 2^{n-1}](\alpha -1)$
for all $n$ (Theorem~\ref{THproxintcnv}),
and the impossibility of finding a proximity bound 
of the form $B(n)(\alpha -1)$ 
where $B(n)$ is linear or smaller than quadratic
(Examples~\ref{EXproxNG422} and \ref{EXproxtigntM2}).
\end{itemize}

\smallskip

As a consequence of our proximity and scaling results, we derive that:

\begin{itemize}
\item
When $n$ is fixed,
an integrally convex function  
can be minimized in $O(\log_{2} K_{\infty})$ time
by standard proximity-scaling algorithms, 
where 
$K_{\infty} = \max\{ \| x -y \|_{\infty} \mid x, y \in \dom f \}$ 
denotes the $\ell_{\infty}$-size of $\dom f$.
\end{itemize}

This paper is organized as follows.
In Section~\ref{SCintcnvfn} the concept of integrally convex functions is reviewed 
with some new observations and, in Section~\ref{SCscalingIC},
their scaling property is clarified.
After a preliminary discussion in Section~\ref{SCpredisproxIC},
a proximity theorem for integrally convex functions 
is established in Section~\ref{SCproxthmICgeneral}.
Algorithmic implications of the proximity-scaling results are discussed
in Section~\ref{SCalgimpl}
and concluding remarks are made in Section~\ref{SCconrem}.

\section{Integrally Convex Sets and Functions}
\label{SCintcnvfn}

For $x \in \RR^{n}$
the integer neighborhood of $x$ is defined as 
\[
N(x) = \{ z \in \mathbb{Z}^{n} \mid | x_{i} - z_{i} | < 1 \ (i=1,\ldots,n)  \}.
\]
For a function
$f: \mathbb{Z}^{n} \to \mathbb{R} \cup \{ +\infty  \}$
the local convex extension 
$\tilde{f}: \RR^{n} \to \RR \cup \{ +\infty \}$
of $f$ is defined 
as the union of all convex envelopes of $f$ on $N(x)$ as follows:
\begin{equation} \label{fnconvclosureloc2}
 \tilde f(x) = 
  \min\{ \sum_{y \in N(x)} \lambda_{y} f(y) \mid
      \sum_{y \in N(x)} \lambda_{y} y = x,  
  (\lambda_{y})  \in \Lambda(x) \}
\quad (x \in \RR^{n}) ,
\end{equation} 
where $\Lambda(x)$ denotes the set of coefficients for convex combinations indexed by $N(x)$:
\[ 
  \Lambda(x) = \{ (\lambda_{y} \mid y \in N(x) ) \mid 
      \sum_{y \in N(x)} \lambda_{y} = 1, 
      \lambda_{y} \geq 0 \ (\forall y \in N(x))  \} .
\] 
If $\tilde f$ is convex on $\RR^{n}$,
then $f$ is said to be {\em integrally convex} \cite{FT90}.
A set $S \subseteq \ZZ^{n}$ is said to be 
integrally convex if
the convex hull $\overline{S}$ of $S$ coincides with the union of the 
convex hulls of $S \cap N(x)$ over $x \in \RR^{n}$,
i.e., if, for any $x \in \RR^{n}$, 
$x \in \overline{S} $ implies $x \in  \overline{S \cap N(x)}$.
A set $S \subseteq \ZZ^{n}$ is integrally convex if and only if
its indicator function is an integrally convex function.
The effective domain and the set of minimizers of an integrally convex function
are both integrally convex \cite[Proposition 3.28]{Mdcasiam};
in particular, the effective domain and the set of minimizers of 
an L$^{\natural}$- or M$^{\natural}$-convex function are integrally convex.

For $n=2$, integrally convex sets are illustrated in Fig.~\ref{FGintconvset}
and their structure is described in the next proposition.
\begin{figure}\begin{center}
\includegraphics[width=0.85\textwidth,clip]{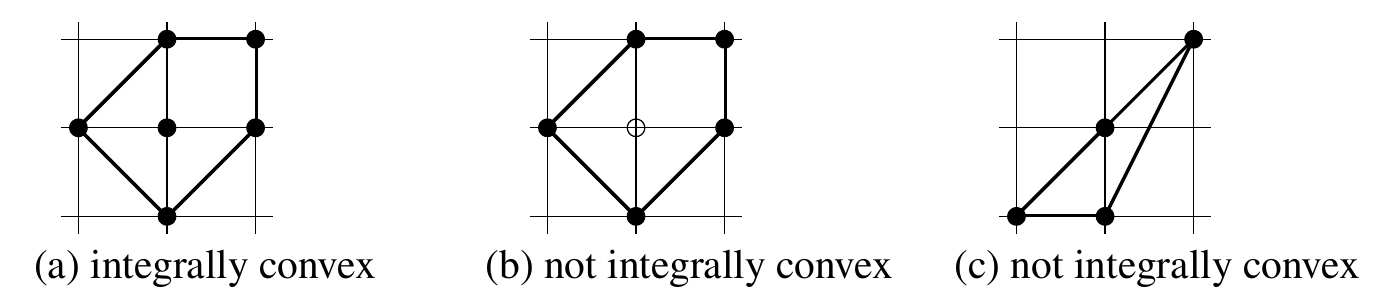}
\caption{Concept of integrally convex sets}
\label{FGintconvset}
\end{center}\end{figure}

\begin{proposition}  \label{PRintcnvsetdim2}
A set $S \subseteq \mathbb{Z}^{2}$ is an integrally convex set
if and only if it can be represented as
$S = \{ (x_{1},x_{2}) \in \mathbb{Z}^{2} \mid
p_{i} x_{1} +  q_{i} x_{2} \leq r_{i} \ (i=1,\ldots,m) \}$
for some $p_{i}, q_{i} \in \{ -1,0,+1 \}$ and $r_{i} \in \ZZ$ $(i=1,\ldots,m)$.
\end{proposition}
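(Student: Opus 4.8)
The plan is to pass from $S$ to its convex hull $\overline S=\mathrm{conv}\,S$ and prove the reformulation: $S\subseteq\ZZ^2$ is integrally convex if and only if $S=\overline S\cap\ZZ^2$ and every edge of $\overline S$ has direction in $D=\{(1,0),(0,1),(1,1),(1,-1)\}$ (equivalently, every edge-supporting line has a normal proportional to one of $(\pm1,0),(0,\pm1),(\pm1,\pm1)$). Granting this, the proposition follows. For the ``only if'' direction one writes $\overline S$ as the intersection of the half-planes bounded by its edges, each of the form $p x_1+q x_2\le r$ with $(p,q)\in\{-1,0,+1\}^2$ and, since every edge carries lattice points, $r\in\ZZ$; intersecting with $\ZZ^2$ and using $S=\overline S\cap\ZZ^2$ yields the representation. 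For the ``if'' direction, given such a representation with real polyhedron $P$, one has $\overline S\subseteq P$, hence $S\subseteq\overline S\cap\ZZ^2\subseteq P\cap\ZZ^2=S$, so $S=\overline S\cap\ZZ^2$, and only the edge-direction condition remains. Two elementary observations drive everything: an \emph{allowed} line $\{p x_1+q x_2=r\}$ (with $(p,q)$ as above, $r\in\ZZ$) meets a unit cell $\prod_i[a_i,a_i+1]$ only along one of its facets or diagonals, i.e. from corner to corner; and a line of any other (\emph{bad}) rational direction entering the interior of a unit cell contains at most one of its four corners.

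For necessity I would show by contradiction that the edges of $\overline S$ are $D$-directed. First, $S=\overline S\cap\ZZ^2$ is immediate from the definition (for integer $z$ one has $N(z)=\{z\}$, so $z\in\overline{S\cap N(z)}$ forces $z\in S$), and the extreme points of $\overline S$ lie in $S\subseteq\ZZ^2$; since only the four directions of $D$ can occur, $\overline S$ is polyhedral. Suppose some edge $e$ had a bad direction, with supporting line $\ell=\{\langle a,\cdot\rangle=b\}$ and $\overline S\subseteq\{\langle a,\cdot\rangle\le b\}$. As $e$ is contained in no grid line, it has a relative-interior point $x_0$ lying in the interior of some cell $C$, so that $N(x_0)$ is the set of four corners of $C$. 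Integral convexity gives $x_0\in\mathrm{conv}(S\cap N(x_0))$; but every corner of $C$ in $S$ satisfies $\langle a,\cdot\rangle\le b$ while $\langle a,x_0\rangle=b$, so any convex combination equal to $x_0$ is supported on the corners lying on $\ell$. By the second observation $\ell$ contains at most one corner of $C$, which cannot equal the interior point $x_0$ — a contradiction. Reading off the inequalities then finishes this direction.

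For sufficiency the only missing piece is the edge-direction condition for $\overline S$ when $S=P\cap\ZZ^2$; assume it for now. Inside any cell $C$ the $D$-directed edges of $\overline S$ run corner to corner (first observation), and since the vertices of $\overline S$ are lattice points they never sit at the centre of a cell; hence at most one diagonal chord appears in $C$ and $\overline S\cap C=\mathrm{conv}(\{\text{corners of }C\}\cap S)$. For $x\in\overline S$ this gives $x\in\mathrm{conv}(S\cap N(x))$ (the cases where $x$ has an integer coordinate, so $N(x)$ is an edge or vertex of $C$, are handled the same way via the one-dimensional slice of $\overline S$), which is exactly integral convexity.

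I expect the main obstacle to be precisely this edge-direction condition for $\overline S=\mathrm{conv}(P\cap\ZZ^2)$, because $P$ may have half-integral non-lattice vertices that the hull ``rounds off,'' so one cannot simply quote the facet directions of $P$. My plan here is a sharp local estimate. Suppose $\overline S$ had a bad edge with lattice endpoint $u$ and outer normal $a$; applying a symmetry of the square I may assume its primitive direction is $(d_1,d_2)$ with $0<d_1<d_2$. Then the corner $c^+=u+(0,1)$ has $\langle a,c^+\rangle>b:=\max_{z\in S}\langle a,z\rangle$, so $c^+\notin P$ and some inequality $p_j x_1+q_j x_2\le r_j$ of $P$ excludes it. Taking the point $x_0$ of the edge in the cell $[u_1,u_1+1]\times[u_2,u_2+1]$ at height $u_2+\tfrac12$, one computes $\|c^+-x_0\|_1=(d_1+d_2)/(2d_2)<1$. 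On the other hand, any admissible inequality (coefficients in $\{-1,0,+1\}$, integer right-hand side) that separates the integer point $c^+$ from a point $x_0\in P$ must satisfy $p_j(c^+)_1+q_j(c^+)_2-\bigl(p_j x_{0,1}+q_j x_{0,2}\bigr)\ge1$; but the maximum of this difference over admissible $(p_j,q_j)$ equals $\|c^+-x_0\|_1<1$, a contradiction. The delicate point is exactly this choice of cell and corner: the normalization $0<d_1<d_2$ is what forces the excluded corner directly above $u$ to lie within $\ell_1$-distance $<1$ of the edge, which is what defeats every admissible cut.
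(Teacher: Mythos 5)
Your proposal is correct in substance but takes a genuinely different route from the paper's. The paper argues cell by cell: for the ``only if'' direction it uses the defining identity $\overline{S}\cap I(a_{1},a_{2})=\overline{S\cap I(a_{1},a_{2})}$ to describe each local hull by at most four admissible inequalities and then passes to a subfamily valid for all of $\overline{S}$; for the ``if'' direction it simply appeals to the ``simple shape'' of the polygon cut out by the inequalities, with at most eight edges in the directions $(1,0),(0,1),(1,1),(1,-1)$. You instead prove a global reformulation---$S=\overline{S}\cap\ZZ^{2}$ together with $D$-directedness of the edges of $\overline{S}$---and derive both implications from it. Your necessity argument (a bad-direction supporting line through a point interior to a cell contains at most one corner of that cell, contradicting $x_{0}\in\mathrm{conv}(S\cap N(x_{0}))$) is clean. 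More importantly, your sufficiency argument supplies exactly what the paper waves away: since the admissible polyhedron $P$ may have non-lattice (half-integral) vertices, one cannot read the edges of $\overline{S}=\mathrm{conv}(P\cap\ZZ^{2})$ off the facets of $P$ (this is the same phenomenon behind Remark~\ref{RMicsetdim2ineq}), and your quantitative estimate---a cut-off corner adjacent to a lattice point of a bad edge lies within $\ell_{1}$-distance $<1$ of a point of $P$, while any admissible inequality separating an integer point from a point of $P$ forces an $\ell_{1}$-gap of at least $1$---closes this rigorously. At that spot your argument is more careful than the paper's own.

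Two details still need tightening, though neither is fatal. First, normalizing the edge direction to $(d_{1},d_{2})$ with $0<d_{1}<d_{2}$ does not by itself put the excluded corner at $u+(0,1)$: the outer normal may be either of $\pm(d_{2},-d_{1})$, and with the ``downward'' orientation the corner $u+(0,1)$ lies weakly inside, not outside. Either invoke the half-turn $-I$ (which is a symmetry of the square and flips the outer normal), re-choosing $u$ as a lattice point from which the edge proceeds in the direction $+(d_{1},d_{2})$ (this choice also needs a word when the edge is a ray or a full line), or argue the second orientation directly with $c=u+(1,1)$ and $x_{0}=u+\frac{1}{d_{2}}(d_{1},d_{2})$, which gives $\|c-x_{0}\|_{1}=(d_{2}-d_{1})/d_{2}<1$ and the same contradiction. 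Second, both directions tacitly treat $\overline{S}$ as a polyhedron so that ``edges'' make sense; for infinite $S$ this requires justification. For necessity it follows from closedness of $\overline{S}$ (a locally finite union of lattice polytopes, by integral convexity) together with the fact that only four edge directions can occur, so there are at most eight edges; for sufficiency either cite that the convex hull of the lattice points of a rational polyhedron is again a polyhedron, or, more simply, reduce to finite $S$ by intersecting with a large integer box---box constraints are themselves admissible, and the local condition $x\in\mathrm{conv}(S\cap N(x))$ is unaffected. Finally, when $\overline{S}$ is lower-dimensional (a $D$-directed segment), the representation needs the end-cutting half-planes as well; their normals can be taken to be the segment direction itself, which is again admissible with integer right-hand side.
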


\begin{proof}
Consider the convex hull $\overline{S}$ of $S$,
and denote the (shifted) unit square
$\{ (x_{1}, x_{2}) \in \RR\sp{2} \mid a_{i} \leq x_{i} \leq  a_{i}+1 \ (i=1,2) \}$
by $I(a_{1}, a_{2})$,
where $(a_{1}, a_{2}) \in \ZZ\sp{2}$.
Let $S$ be an integrally convex set.
It follows from the definition that
$\overline{S} \cap I(a_{1}, a_{2}) = \overline{S \cap I(a_{1}, a_{2})}$
for each $(a_{1}, a_{2}) \in \ZZ\sp{2}$.
Obviously,
$\overline{S \cap I(a_{1}, a_{2})}$
can be described by (at most four) inequalities
$p'_{j} x_{1} +  q'_{j} x_{2} \leq r'_{j} \ (j=1,\ldots,\ell')$
with $p'_{j}, q'_{j} \in \{ -1,0,+1 \}$ and $r'_{j} \in \ZZ$ $(j=1,\ldots,\ell')$,
where $\ell' = \ell'(a_{1}, a_{2}) \leq 4$.
Since $\overline{S}$ is the union of sets $\overline{S} \cap I(a_{1}, a_{2})$,
$\overline{S}$ can be represented
as $\{ (x_{1},x_{2}) \in \mathbb{R}^{2} \mid
p_{i} x_{1} +  q_{i} x_{2} \leq r_{i} \ (i=1,\ldots,m) \}$
by a subfamily of the inequalities used for all
$\overline{S \cap I(a_{1}, a_{2})}$.
Then we have
$S  = \{ (x_{1},x_{2}) \in \mathbb{Z}^{2} \mid
p_{i} x_{1} +  q_{i} x_{2} \leq r_{i} \ (i=1,\ldots,m) \}$.
Converesly, integral convexity of set $S$ represented in this form for any
$p_{i}, q_{i} \in \{ -1,0,+1 \}$ and $r_{i} \in \ZZ$ is an easy consequence 
of the simple shape of the (possibly unbounded) polygon
$ \{ (x_{1},x_{2}) \in \mathbb{R}^{2} \mid
p_{i} x_{1} +  q_{i} x_{2} \leq r_{i} \ (i=1,\ldots,m) \}$,
which has at most eight edges having directions
parallel to one of the vectors 
$(1,0)$, $(0,1)$, $(1,1)$, $(1,-1)$.
\end{proof}

We note that in the special case where all inequalities
$p_{i} x_{1} +  q_{i} x_{2} \leq r_{i}$  $(i=1,\ldots,m)$
defining $S$ in Proposition~\ref{PRintcnvsetdim2}
satisfy the additional property
$p_{i} q_{i} \leq 0$,
the set $S$ is actually an ${\rm L}^{\natural}$-convex set
\cite[Section 5.5]{Mdcasiam},
which is a special type of sublattice \cite{QT06}.

\begin{remark} \rm \label{RMicsetdim2ineq}
A subtle point in Proposition~\ref{PRintcnvsetdim2} is explained here.
In Proposition~\ref{PRintcnvsetdim2} we do not mean that the system of inequalities for $S$
describes the convex hull $\overline{S}$ of $S$.
That is, it is not claimed that
$\overline{S} = \{ (x_{1},x_{2}) \in \mathbb{R}^{2} \mid
p_{i} x_{1} +  q_{i} x_{2} \leq r_{i} \ (i=1,\ldots,m) \}$
holds.
For instance,
$S = \{ (0,0), (1,0) \}$ is an integrally convex set, which can be represented as
the set of integer points satisfying the four inequalities:
$ -x_{1} +  x_{2} \leq 0$,
$ x_{1} -  x_{2} \leq 1$,
$ x_{1} +  x_{2} \leq 1$, and
$ -x_{1} - x_{2} \leq 0$.
These inequalities, however, do not describe
the convex hull $\overline{S}$,
which is the line segment connecting $(0,0)$ and $(1,0)$.
Nevertheless, it is true in general
(cf. the proof of Proposition~\ref{PRintcnvsetdim2})
that the convex hull of an integrally convex set
can be described by inequalities
of the form of $p'_{i} x_{1} +  q'_{i} x_{2} \leq r'_{i}$
with $p'_{i}, q'_{i} \in \{ -1,0,+1 \}$ and $r'_{i} \in \ZZ$
$(i=1,\ldots,m')$.
For $S = \{ (0,0), (1,0) \}$  we can describe $\overline{S}$
by adding two inequalities
$x_{2} \leq 0$ and $- x_{2} \leq 0$ to the original system of four inequalities.
The present form of Proposition~\ref{PRintcnvsetdim2},
avoiding the convex hull, is convenient
in the proof of Proposition~\ref{PRintcnvsetscdim2}.
\finbox
\end{remark}

\begin{corollary} \label{COintconvset-pairofpoints}
If a set $S \subseteq \ZZ^2$ is integrally convex, then for all points $x,y \in S$,
the set
\begin{align*}
{\rm ICH}(x,y) = &   \{z \in \ZZ^2 \mid  
  \min\{x_i,y_i\} \leq z_i \leq \max\{x_i,y_i\}  \ (i=1,2), \
\\ &
   \min\{x_1-x_2,y_1-y_2\} \leq z_1 - z_2 \leq \max\{x_1-x_2,y_1-y_2\}, \
\\ &
   \min\{x_1+x_2,y_1+y_2\} \leq z_1 + z_2 \leq \max\{x_1+x_2,y_1+y_2\}\  \}
\end{align*}
is contained in S.
\end{corollary}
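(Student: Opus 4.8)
The plan is to reduce the claim to the inequality description of integrally convex planar sets provided by Proposition~\ref{PRintcnvsetdim2}. Since $S$ is integrally convex, I may write
\[
 S = \{ (z_1,z_2) \in \ZZ^2 \mid p_i z_1 + q_i z_2 \leq r_i \ (i=1,\ldots,m) \}
\]
with all coefficients $p_i, q_i \in \{ -1,0,+1 \}$ and $r_i \in \ZZ$. The corollary then amounts to verifying that every $z \in {\rm ICH}(x,y)$ satisfies each of these $m$ inequalities, and hence lies in $S$.

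The key observation is that, precisely because the coefficients lie in $\{ -1,0,+1 \}$, each linear form $p z_1 + q z_2$ with $(p,q) \in \{ -1,0,+1 \}^2$ is, up to sign, one of $z_1$, $z_2$, $z_1 + z_2$, $z_1 - z_2$ (or identically zero). These are exactly the four quantities whose ranges between $x$ and $y$ are prescribed in the definition of ${\rm ICH}(x,y)$. I would therefore first record the elementary fact that for every $(p,q) \in \{ -1,0,+1 \}^2$ and every $z \in {\rm ICH}(x,y)$,
\[
 p z_1 + q z_2 \ \leq \ \max\{ p x_1 + q x_2, \ p y_1 + q y_2 \} .
\]
This is checked by running through the nine sign patterns: $(1,0)$ and $(-1,0)$ give the upper and lower coordinate bounds on $z_1$, the patterns $(0,\pm 1)$ those on $z_2$, the patterns $(1,1)$, $(-1,-1)$ the bounds on the sum $z_1+z_2$, and $(1,-1)$, $(-1,1)$ the bounds on the difference $z_1 - z_2$, while $(0,0)$ is trivial.

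Granting this inequality, the corollary is immediate. Fix $z \in {\rm ICH}(x,y)$ and any defining inequality $p_i z_1 + q_i z_2 \leq r_i$. Since $x, y \in S$, both $p_i x_1 + q_i x_2 \leq r_i$ and $p_i y_1 + q_i y_2 \leq r_i$ hold, so applying the displayed inequality with $(p,q)=(p_i,q_i)$ yields
\[
 p_i z_1 + q_i z_2 \ \leq \ \max\{ p_i x_1 + q_i x_2, \ p_i y_1 + q_i y_2 \} \ \leq \ r_i .
\]
As this holds for every $i$, the point $z$ satisfies the whole system defining $S$, whence $z \in S$ and ${\rm ICH}(x,y) \subseteq S$.

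There is no genuinely hard step: all the real content is carried by Proposition~\ref{PRintcnvsetdim2}, and the remainder is bookkeeping. The only place demanding slight care is the handling of negative coefficients --- recognizing, for instance, that a constraint $-z_1 + z_2 \leq r_i$ controls the lower bound on $z_1 - z_2$ rather than the upper one --- but stating the key inequality uniformly over all nine sign patterns disposes of this cleanly.
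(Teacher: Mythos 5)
Your proposal is correct and follows essentially the same route as the paper's own proof: both invoke Proposition~\ref{PRintcnvsetdim2} to express $S$ by inequalities with coefficients in $\{-1,0,+1\}$, and then observe that each such linear form $p_i z_1 + q_i z_2$ is bounded on ${\rm ICH}(x,y)$ by $\max\{p_i x_1 + q_i x_2,\ p_i y_1 + q_i y_2\} \leq r_i$, since every form with these coefficients is (up to sign) one of $z_1$, $z_2$, $z_1+z_2$, $z_1-z_2$. The only cosmetic difference is that you state the key inequality uniformly over all nine sign patterns, while the paper enumerates the resulting maxima directly.
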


\begin{proof}
Let $S$ be represented as in Proposition \ref{PRintcnvsetdim2} and let $x,y \in S$.
Then we clearly have  
$ \max \{p_{i} x_{1} +  q_{i} x_{2}, p_{i} y_{1} +  q_{i} y_{2} \} \leq r_{i}$ $(i=1,\ldots,m)$.
The claim follows by observing that 
$\max \{p_{i} x_{1} +  q_{i} x_{2}, p_{i} y_{1} +  q_{i} y_{2} \}$
coincides with one of
$\max \{x_{i}, y_{i} \}$, $\max \{-x_{i}, -y_{i} \}$ $(i=1,2)$, \ 
$\max \{x_{1}-x_{2}, y_{1}- y_{2} \}$, \ 
$\max \{x_{1}+x_{2}, y_{1}+ y_{2} \}$, \  
$\max \{-x_{1}+x_{2}, -y_{1}+ y_{2} \}$, $\max \{-x_{1}-x_{2}, -y_{1}-y_{2} \}$, 
according to the values of $p_{i}, q_{i} \in \{ -1,0,+1 \}$.
\end{proof}

Note that ${\rm ICH}(x,y)$ is integrally convex
by Proposition \ref{PRintcnvsetdim2},
and that, by the above corollary, any integrally convex set 
containing $\{x,y \}$ must contain ${\rm ICH}(x,y)$.
Thus ${\rm ICH}(x,y)$ is the smallest integrally convex set containing $\{x,y \}$.

Integral convexity is  preserved under the operations of
origin shift, permutation of components, and componentwise (individual) sign inversion.
For later reference we state these facts as a proposition.

\begin{proposition}  \label{PRintcnvinvar}
Let $f: \mathbb{Z}^{n} \to \mathbb{R} \cup \{ +\infty  \}$
be an integrally convex function.

\noindent
{\rm (1)} 
For any $z \in \ZZ^{n}$,  $f(z + x)$ is integrally convex in $x$.

\noindent
{\rm (2)} 
For any permutation  $\sigma$ of $(1,2,\ldots,n)$,  
 $f(x_{\sigma(1)}, x_{\sigma(2)}, \ldots, x_{\sigma(n)})$ 
is integrally convex in $x$.
\\
{\rm (3)} 
For any $s_{1}, s_{2}, \ldots, s_{n} \in \{ +1, -1 \}$,
 $f(s_{1} x_{1}, s_{2} x_{2}, \ldots, s_{n} x_{n})$ is integrally convex in $x$.
\end{proposition}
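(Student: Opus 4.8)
The plan is to handle the three parts uniformly. Each operation is composition of $f$ with an affine bijection $\Phi\colon \RR^n \to \RR^n$ that maps $\ZZ^n$ onto itself and preserves the $\ell_{\infty}$ neighborhood structure: for (1) take $\Phi(x)=z+x$, for (2) take $\Phi(x)=(x_{\sigma(1)},\ldots,x_{\sigma(n)})$, and for (3) take $\Phi(x)=(s_{1}x_{1},\ldots,s_{n}x_{n})$. In every case there is a fixed permutation $\pi$ of $\{1,\ldots,n\}$ (the identity in (1) and (3), and $\sigma$ in (2)) with $|(\Phi x)_{i}-(\Phi z)_{i}|=|x_{\pi(i)}-z_{\pi(i)}|$ for all $i$. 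Writing $g=f\circ\Phi$ for the transformed function, I would show that $\tilde g=\tilde f\circ\Phi$ and then invoke convexity of $\tilde f$.

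First I would establish the neighborhood identity $N(\Phi(x))=\Phi(N(x))$ for every $x\in\RR^{n}$. The coordinatewise relation above shows $z\in N(x)\iff \Phi(z)\in N(\Phi(x))$, and since $\Phi$ restricts to a bijection of $\ZZ^{n}$ this yields the asserted equality of sets. Next I would transport the defining formula~(\ref{fnconvclosureloc2}) through $\Phi$: substituting $w=\Phi(y)$ and $\mu_{w}=\lambda_{\Phi^{-1}(w)}$, the index set $N(x)$ is replaced by $N(\Phi(x))$, and because an affine map sends convex combinations to convex combinations the barycentric constraint $\sum_{y}\lambda_{y}y=x$ becomes $\sum_{w}\mu_{w}w=\Phi(x)$. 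The objective $\sum_{y}\lambda_{y}f(\Phi(y))$ becomes $\sum_{w}\mu_{w}f(w)$, and the coefficient families $(\lambda_{y})\in\Lambda(x)$ correspond bijectively to $(\mu_{w})\in\Lambda(\Phi(x))$, so the minimization defining $\tilde g(x)$ coincides term by term with the one defining $\tilde f(\Phi(x))$. Hence $\tilde g=\tilde f\circ\Phi$.

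The conclusion is then immediate: $f$ integrally convex means $\tilde f$ is convex on $\RR^{n}$, and precomposition with the affine map $\Phi$ preserves convexity, so $\tilde g$ is convex and $g$ is integrally convex. I expect the only point requiring care to be the neighborhood identity, together with the observation that it depends essentially on $\Phi$ being an $\ell_{\infty}$-isometry of the lattice: a general unimodular transformation would distort the sets $N(x)$ and need not preserve integral convexity, which is precisely why the statement is restricted to integer translations, coordinate permutations, and individual sign flips.
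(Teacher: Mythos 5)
Your proposal is correct and follows essentially the same route as the paper: the paper's proof also rests on the neighborhood identities $N(\Phi(x)) = \Phi(N(x))$ for the three transformations and then appeals to the definition of integral convexity, exactly as you do. You have merely spelled out the details the paper leaves implicit (transporting the minimization formula for $\tilde f$ through $\Phi$ to get $\tilde g = \tilde f \circ \Phi$, and invoking preservation of convexity under affine precomposition), and your closing remark about why general unimodular maps fail is a sound observation consistent with the paper's restriction to translations, permutations, and sign flips.
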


\begin{proof}
The claims (1) to (3) follow easily from the definition of integrally convex 
functions and the obvious relations:
$N(z + x) = 
\{ z + y \mid y \in N(x) \}$,
$ N((x_{\sigma(1)},  \ldots, x_{\sigma(n)})) = 
\{ (y_{\sigma(1)}, \ldots, y_{\sigma(n)}) \mid y \in N(x) \}$,
and $N((s_{1} x_{1},  \ldots, s_{n} x_{n})) =
\{ (s_{1} y_{1},  \ldots, s_{n} y_{n}) \mid y \in N(x) \}$.
\end{proof}

Integral convexity of a function can be characterized by a local condition
under the assumption that the effective domain is an integrally convex set.
The following theorem is proved in \cite{FT90} when the effective domain
is an integer interval (discrete rectangle).
An alternative proof, which is also valid for the general case,
 is given in Appendix \ref{SCintcnvD2proof}.

\begin{theorem}[\protect{\cite[Proposition 3.3]{FT90}}] \label{THfavtarProp33}
Let $f: \mathbb{Z}^{n} \to \mathbb{R} \cup \{ +\infty  \}$
be a function with an integrally convex effective domain.
Then the following properties are equivalent:

{\rm (a)}
$f$ is integrally convex.

{\rm (b)}
For every $x, y \in \dom f$ with $\| x - y \|_{\infty} =2$ 
we have \ 
\begin{equation}  \label{intcnvconddist2}
\tilde{f}\, \bigg(\frac{x + y}{2} \bigg) 
\leq \frac{1}{2} (f(x) + f(y)).
\end{equation}
\vspace{-1.7\baselineskip}
\\
\finbox
\end{theorem}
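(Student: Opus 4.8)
The implication (a) $\Rightarrow$ (b) is immediate. If $f$ is integrally convex then $\tilde f$ is convex, and since $N(z)=\{z\}$ for $z\in\ZZ^{n}$ we have $\tilde f=f$ on $\ZZ^{n}$. Applying the convexity inequality of $\tilde f$ to the midpoint of any $x,y\in\dom f$ yields $\tilde f((x+y)/2)\leq \tfrac12(\tilde f(x)+\tilde f(y))=\tfrac12(f(x)+f(y))$, which is exactly (\ref{intcnvconddist2}); note that no restriction on $\|x-y\|_{\infty}$ is needed for this direction.

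For (b) $\Rightarrow$ (a) the plan is to prove that $\tilde f$ is convex on $\RR^{n}$. Since $\dom f$ is integrally convex, its convex hull $\overline{\dom f}=\dom\tilde f$ is a convex set, and $\tilde f$ is continuous and piecewise linear on it; moreover, on each integral unit cube $\{x\mid a_{i}\leq x_{i}\leq a_{i}+1\ (i=1,\ldots,n)\}$ with $a\in\ZZ^{n}$ the function $\tilde f$ is by construction the lower convex envelope of the $2^{n}$ corner values and is therefore convex there. Consequently the only possible failures of convexity occur where a line crosses a grid hyperplane $\{x_{i}=c\}$ ($c\in\ZZ$), i.e.\ passes from one unit cube into an adjacent one; breakpoints internal to a single cube cause no trouble. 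First I would reduce to generic lines: for a dense set of directions $d$ and almost all base points, the line $\{w+td\}$ meets the grid hyperplanes at distinct parameter values, so every breakpoint is a simple crossing of a single facet shared by two neighbouring cubes $C^{-}$ (with $x_{i}\leq c$) and $C^{+}$ (with $x_{i}\geq c$). By continuity of $\tilde f$, midpoint convexity along this dense family of segments then extends to all pairs of points, yielding global convexity of $\tilde f$.

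It then remains to show that $\tilde f$ has no concave kink across such a facet. The affine pieces of $\tilde f$ on the sub-simplices of $C^{-}$ and $C^{+}$ abutting the crossing point are determined by the values of $f$ at the corners of the two cubes, and the requisite local-convexity (second-difference) inequality across $\{x_{i}=c\}$ is precisely the midpoint inequality (\ref{intcnvconddist2}) for integer pairs $x,y$ with $x_{i}=c-1$, $y_{i}=c+1$ and $|x_{j}-y_{j}|\leq 1$ for $j\neq i$ — that is, straddling pairs with $\|x-y\|_{\infty}=2$, which are covered by hypothesis (b). The hard part will be this last step: carefully identifying, from the triangulations induced by the two local convex envelopes, which straddling distance-$2$ pairs control each facet crossing, and verifying that the inequalities (\ref{intcnvconddist2}) for all of them indeed force convexity across the entire facet (and, in the limiting non-generic directions, across the lower-dimensional faces where several cubes meet). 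This gluing analysis, together with the genericity/density reduction, is where essentially all the work lies; the integral convexity of $\dom f$ is used throughout to ensure that the midpoints and intermediate integer points appearing in the argument again belong to $\dom f$.
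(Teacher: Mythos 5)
Your direction (a) $\Rightarrow$ (b) is fine, and your overall local-to-global skeleton for (b) $\Rightarrow$ (a) (convexity inside each cell, then gluing across the grid) is a reasonable plan. But there is a genuine gap: the entire mathematical content of the implication is the step you label ``the hard part'' and leave unexecuted. You assert that the absence of a concave kink of $\tilde{f}$ across a facet $\{x_{i}=c\}$ is ``precisely'' the midpoint inequality (\ref{intcnvconddist2}) for straddling pairs $x,y$ with $x_{i}=c-1$, $y_{i}=c+1$, $\|x-y\|_{\infty}=2$, but this identification is exactly what must be proved, and it is not routine. The value of $\tilde{f}$ at a point of the facet is a convex envelope over the integer points of the facet, and the affine pieces of $\tilde{f}$ on either side come from the (unknown, possibly complicated) polyhedral subdivisions induced by the two cell-wise envelopes; a line generically crosses the facet at a point that is not a midpoint of any integer straddling pair, so one must deduce the kink inequality at arbitrary facet points from the finitely many midpoint inequalities supplied by (b). Doing this deduction is essentially equivalent in difficulty to the theorem itself; as written, your proof proposal assumes it. The genericity/continuity reduction is the easy part and would be fine once the facet step is in place, so the proposal as it stands is a plan rather than a proof.

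For comparison, the paper's proof (Appendix~\ref{SCintcnvD2proof}) avoids any analysis of induced triangulations or facet normals. Its key device is Lemma~\ref{LMboxD2}: on every box $B=[a,a+2\bm{1}]_{\RR}$ of size two, $\tilde{f}$ is convex on $B\cap\overline{\dom f}$. This is proved by contradiction through a convex-combination exchange argument: if some point had a generating representation $x=\sum_{i}\lambda_{i}y^{i}$ with $\sum_{i}\lambda_{i}f(y^{i})<\tilde{f}(x)$, then whenever two generators straddle a coordinate (values $0$ and $2$), condition (b) applied to that pair lets one replace part of their weight by generators with middle coordinate $1$, strictly decreasing the number of extreme generators; iterating drives all generators into the unit cube $N(x)$, contradicting the definition (\ref{fnconvclosureloc2}) of $\tilde{f}$. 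Then Lemma~\ref{LMlineseg} covers any segment by size-two boxes---every point of $\RR^{n}$ is interior to such a box, which is exactly why boxes of size two rather than unit cubes are used---and invokes a standard local-to-global convexity lemma. If you want to salvage your approach, the lesson is that working with arbitrary convex-combination representations, rather than with the geometry of the glued piecewise-linear pieces, is what makes the straddling-pair hypothesis directly usable.
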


\begin{theorem}[\protect{\cite[Proposition 3.1]{FT90}}; 
  see also \protect{\cite[Theorem 3.21]{Mdcasiam}}] 
  \label{THintcnvlocopt}
Let $f: \mathbb{Z}^{n} \to \mathbb{R} \cup \{ +\infty  \}$
be an integrally convex function and $x^{*} \in \dom f$.
Then $x^{*}$ is a minimizer of $f$
if and only if
$f(x^{*}) \leq f(x^{*} +  d)$ for all 
$d \in  \{ -1, 0, +1 \}^{n}$.
\finbox
\end{theorem}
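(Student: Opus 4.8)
The plan is to pass to the local convex extension $\tilde f$ and to exploit that integral convexity makes $\tilde f$ convex on $\RR^{n}$, so that the local optimality hypothesis for $f$ at the integer point $x^{*}$ translates into local --- and hence, by convexity, global --- optimality of $\tilde f$. The \emph{only if} direction is immediate, since a global minimizer of $f$ trivially satisfies $f(x^{*}) \leq f(x^{*}+d)$ for every $d \in \{-1,0,+1\}^{n}$.

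For the \emph{if} direction I would first record two elementary facts about $\tilde f$ at and near $x^{*}$. Because $x^{*} \in \ZZ^{n}$, the strict inequalities defining $N(x)$ force $N(x^{*}) = \{x^{*}\}$, whence $\tilde f(x^{*}) = f(x^{*})$. Next, for any $y \in \RR^{n}$ with $\| y - x^{*} \|_{\infty} < 1$ and any $z \in N(y)$, the triangle inequality gives $|z_{i}-x^{*}_{i}| < 2$ for each $i$; as $z_{i}$ and $x^{*}_{i}$ are integers this yields $|z_{i}-x^{*}_{i}| \leq 1$, so that $N(y) \subseteq x^{*} + \{-1,0,+1\}^{n}$.

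Combining these with the hypothesis, for every such $y$ and every convex representation $y = \sum_{z \in N(y)} \lambda_{z} z$ with $(\lambda_{z}) \in \Lambda(y)$ one has $\sum_{z} \lambda_{z} f(z) \geq \sum_{z} \lambda_{z} f(x^{*}) = f(x^{*})$, since each $f(z) \geq f(x^{*})$ by assumption. Taking the minimum over such representations gives $\tilde f(y) \geq f(x^{*}) = \tilde f(x^{*})$, so $x^{*}$ is a local minimizer of $\tilde f$ on the open $\ell_{\infty}$-ball of radius $1$. As $f$ is integrally convex, $\tilde f$ is convex on $\RR^{n}$, and for a convex function every local minimizer is a global minimizer; hence $\tilde f(x^{*}) \leq \tilde f(x)$ for all $x \in \RR^{n}$. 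Restricting to $x \in \ZZ^{n}$ and using $\tilde f|_{\ZZ^{n}} = f$ yields $f(x^{*}) \leq f(x)$ for all integer $x$, i.e., $x^{*}$ minimizes $f$.

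I expect the only genuinely delicate point to be the neighborhood containment $N(y) \subseteq x^{*} + \{-1,0,+1\}^{n}$, which relies crucially on the strict inequality in the definition of $N(\cdot)$; the remainder is the standard ``local equals global for convex functions'' argument applied to $\tilde f$.
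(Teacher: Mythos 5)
Your proof is correct, and it runs on the same engine as the paper's argument: the paper obtains this theorem as the special case $p = x^{*} - \bm{1}$, $q = x^{*} + \bm{1}$ of its box-barrier property (Theorem~\ref{THintcnvbox}), whose proof likewise passes to the local convex extension $\tilde f$, uses the containment of integer neighborhoods of points near $x^{*}$ in $x^{*} + \{-1,0,+1\}^{n}$ to get $\tilde f \geq f(x^{*})$ on a barrier, and then invokes convexity of $\tilde f$ along segments --- which is exactly your ``local implies global'' step made explicit. The one point you flag as delicate, $N(y) \subseteq x^{*} + \{-1,0,+1\}^{n}$ for $\| y - x^{*} \|_{\infty} < 1$, is handled correctly via the strictness in the definition of $N(\cdot)$ and integrality.
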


The local characterization of global minima
stated in Theorem~\ref{THintcnvlocopt} above
can be generalized to the following form;
see Fig.~\ref{FGboxbarrier}.

\begin{figure}\begin{center}
\includegraphics[height=40mm]{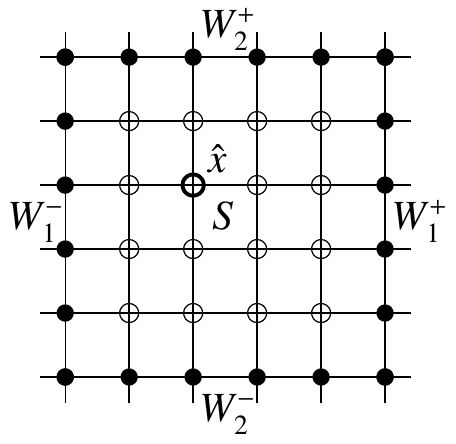}
\caption{Box-barrier property ($\circ \in S$, $\bullet \in W$)}
\label{FGboxbarrier}
\end{center}\end{figure}

\begin{theorem}[Box-barrier property] \label{THintcnvbox}
Let $f: \mathbb{Z}^{n} \to \mathbb{R} \cup \{ +\infty  \}$
be an integrally convex function, 
and let
$p \in (\mathbb{Z} \cup \{ -\infty  \})^{n}$
and
$q \in (\mathbb{Z} \cup \{ +\infty  \})^{n}$,
where $p \leq q$.
Define 
\begin{align*}
S &= \{ x \in \mathbb{Z}^{n} \mid p_{i} < x_{i} < q_{i} \ (i=1,\ldots,n) \},
\\
W_{i}^{+} &= \{ x \in \mathbb{Z}^{n} \mid 
 x_{i} = q_{i}, \  p_{j} \leq x_{j} \leq q_{j} \ (j \not= i) \}
\quad (i=1,\ldots,n),
\\
W_{i}^{-} &= \{ x \in \mathbb{Z}^{n} \mid 
 x_{i} = p_{i}, \  p_{j} \leq x_{j} \leq q_{j} \ (j \not= i) \}
\quad (i=1,\ldots,n),
\end{align*}
and $W = \bigcup_{i=1}^{n} (W_{i}^{+} \cup W_{i}^{-})$.
Let $\hat x \in S \cap \dom f$.
If 
$f(\hat x) \leq f(y)$ for all $y \in W$,
then 
$f(\hat x) \leq f(z)$ for all $z \in \ZZ^{n} \setminus S$.
\end{theorem}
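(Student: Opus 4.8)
The plan is to reduce the desired global inequality to a one-dimensional convexity argument along the segment joining $\hat x$ to an arbitrary target point, exploiting that integral convexity of $f$ means its local convex extension $\tilde f$ is convex on $\RR^{n}$. Fix $z \in \ZZ^{n} \setminus S$; we may assume $z \in \dom f$, since otherwise $f(\hat x) \le f(z) = +\infty$ is trivial. Consider the continuous segment $x(t) = (1-t)\hat x + t z$ for $t \in [0,1]$. Since $\hat x$ lies strictly inside the box $B = \prod_{i}[p_i,q_i]$ (because $p_i < \hat x_i < q_i$ for all $i$) whereas $z$ violates $p_k < z_k < q_k$ for some $k$, the segment must leave the interior of $B$. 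First I would let $t^{\ast} \in (0,1]$ be the smallest parameter at which $w := x(t^{\ast})$ meets the boundary of $B$; its existence and positivity follow from the intermediate value theorem, and by continuity $p_j \le w_j \le q_j$ for every $j$, while $w_i \in \{p_i, q_i\}$ for at least one index $i$. Such a finite crossing value is necessarily a genuine integer (infinite bounds are never attained along the segment), so $w$ lies in the convex hull of one of the walls $W_i^{+}$ or $W_i^{-}$.

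The key step is to prove that the entire integer neighborhood $N(w)$ is contained in the barrier $W$. Assume the crossing occurs at $w_i = q_i$, the case $w_i = p_i$ being symmetric. Any $s \in N(w)$ satisfies $|q_i - s_i| < 1$ with $s_i \in \ZZ$, forcing $s_i = q_i$; and for each $j \neq i$ the bound $p_j \le w_j \le q_j$ gives $p_j \le \lfloor w_j \rfloor \le \lceil w_j \rceil \le q_j$, hence $p_j \le s_j \le q_j$, where an infinite $p_j$ or $q_j$ simply renders the corresponding inequality vacuous. Thus $s \in W_i^{+} \subseteq W$, so $N(w) \subseteq W$. Writing $\tilde f(w)$ as the optimal convex combination $\sum_{s \in N(w)} \lambda_s f(s)$ from its definition \eqref{fnconvclosureloc2} and using the hypothesis $f(s) \ge f(\hat x)$ for every $s \in W$, I obtain $\tilde f(w) \ge f(\hat x)$.

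Finally I would invoke convexity of $\tilde f$. Since $\tilde f$ agrees with $f$ at the integer points $\hat x$ and $z$, convexity applied to $w = (1-t^{\ast})\hat x + t^{\ast} z$ gives $\tilde f(w) \le (1-t^{\ast}) f(\hat x) + t^{\ast} f(z)$. Combining with $\tilde f(w) \ge f(\hat x)$ yields $f(\hat x) \le (1-t^{\ast}) f(\hat x) + t^{\ast} f(z)$, and dividing by $t^{\ast} > 0$ gives $f(\hat x) \le f(z)$, which is the claim.

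The step I expect to be the main obstacle is the containment $N(w) \subseteq W$: one has to control the rounding of the non-crossing coordinates against the possibly infinite bounds and confirm that the crossing coordinate really lands on a finite-valued wall rather than escaping in an unbounded direction. Once this is in place the remainder is a direct one-dimensional computation, and the argument transparently specializes (taking $p = \hat x - \vecone$ and $q = \hat x + \vecone$, so that $S = \{\hat x\}$) to the local optimality criterion of Theorem~\ref{THintcnvlocopt}.
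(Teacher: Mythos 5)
Your proof is correct and follows essentially the same route as the paper's own argument: intersect the segment from $\hat x$ to $z$ with the boundary walls, show that the integer neighborhood of the crossing point $w$ is contained in $W$, bound $\tilde f(w)$ from below by $f(\hat x)$ (via the convex-combination definition of $\tilde f$) and from above by $(1-t^{\ast})f(\hat x)+t^{\ast}f(z)$ (via convexity of $\tilde f$), and divide by $t^{\ast}>0$. The only difference is that you make explicit several details the paper leaves implicit, namely the choice of the first crossing time, the finiteness of the attained bound, and the rounding argument establishing $N(w)\subseteq W$.
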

\begin{proof}
Let
$U = \bigcup_{i=1}^{n}  \{ x \in \mathbb{R}^{n} \mid 
 x_{i} \in  \{ p_{i}, q_{i} \}, 
 \  p_{j} \leq x_{j} \leq q_{j} \ (j \not= i) \}$,
for which we have $U \cap \mathbb{Z}^{n} = W$. 
For a point $z \in \ZZ^{n} \setminus S$, 
the line segment connecting $\hat x$ and $z$ 
intersects $U$
at a point, say, $u \in \RR^{n}$.
Then its integral neighborhood $N(u)$ 
is contained in $W$.
Since the local convex extension $\tilde{f}(u)$ 
is a convex combination of 
the $f(y)$'s  with $y \in N(u)$,
and 
$f(y) \geq f(\hat x)$ for every $y \in W$,
we have 
$\tilde{f}(u) \geq f(\hat x)$.
On the other hand, it follows from integral convexity that 
$\tilde{f}(u) \leq (1 - \lambda) f(\hat x) + \lambda f(z)$
for some $\lambda$ with $0 < \lambda \leq 1$.
Hence 
$f(\hat x) \leq \tilde{f}(u) \leq (1 - \lambda) f(\hat x) + \lambda f(z)$,
and therefore,
$f(\hat x) \leq f(z)$.
\end{proof}

Theorem~\ref{THintcnvlocopt} is
a special case of Theorem~\ref{THintcnvbox}
with $p=\hat x - \bm{1}$ and $q=\hat x + \bm{1}$.
Another special case of Theorem~\ref{THintcnvbox}
with 
$p_{j} = -\infty$ $(j=1,\ldots,n)$ and 
$q_{j} = +\infty$ $(j \not= i)$ for a particular $i$
takes the following form,
which we use in Section~\ref{SCproxproofn2}.

\begin{corollary}[Hyperplane-barrier property] \label{COintcnvcutal}
Let $f: \mathbb{Z}^{n} \to \mathbb{R} \cup \{ +\infty  \}$
be an integrally convex function.
Let 
$\hat x \in \dom f$,
$q \in \ZZ$, and let 
$i$ be an integer with $1 \leq i \leq n$.
If 
$\hat x_{i} < q$
and
$f(\hat x) \leq f(y)$ for all $y \in \ZZ^{n}$ with $y_{i} = q$,
then 
$f(\hat x) \leq f(z)$ for all $z \in \ZZ^{n}$ with $z_{i} \geq q$.
\finbox
\end{corollary}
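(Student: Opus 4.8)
The statement is the special case of the box-barrier property (Theorem~\ref{THintcnvbox}) advertised in the surrounding text, so the plan is simply to instantiate that theorem with the appropriate infinite bounds and check that all its ingredients collapse to the objects appearing in the corollary. Concretely, I would apply Theorem~\ref{THintcnvbox} to the same function $f$ with the lower-bound vector $p$ taken to be $-\infty$ in every coordinate, and with the upper-bound vector (the vector called $q$ in the theorem, not to be confused with the scalar $q$ of the corollary) taken to have its $i$-th entry equal to the given scalar $q$ and all other entries equal to $+\infty$. The hypothesis $p \leq q$ of the theorem then holds trivially.

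The next step is to identify the resulting sets $S$ and $W$. With these bounds, the defining inequalities $p_j < x_j < q_j$ are vacuous for every $j \neq i$ (both bounds infinite) and reduce to $x_i < q$ for $j = i$; hence $S = \{ x \in \mathbb{Z}^n \mid x_i < q \}$, and $\hat x$ lies in $S \cap \dom f$ precisely because $\hat x_i < q$ and $\hat x \in \dom f$. For the barrier set $W$, I would observe that $W_i^-$ and each $W_j^+, W_j^-$ with $j \neq i$ are empty, since each of these requires some coordinate to equal $-\infty$ or $+\infty$, which no integer vector can satisfy; the only surviving component is $W_i^+ = \{ x \in \mathbb{Z}^n \mid x_i = q \}$, because the side constraints $p_j \leq x_j \leq q_j$ $(j \neq i)$ are now vacuous. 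Thus $W = \{ x \in \mathbb{Z}^n \mid x_i = q \}$, and the corollary's hypothesis that $f(\hat x) \leq f(y)$ for all $y$ with $y_i = q$ is exactly the hypothesis $f(\hat x) \leq f(y)$ for all $y \in W$ required by Theorem~\ref{THintcnvbox}.

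Finally, I would read off the conclusion. Theorem~\ref{THintcnvbox} yields $f(\hat x) \leq f(z)$ for all $z \in \mathbb{Z}^n \setminus S$, and since $\mathbb{Z}^n \setminus S = \{ z \in \mathbb{Z}^n \mid z_i \geq q \}$, this is precisely the desired statement. There is no substantial obstacle here beyond bookkeeping: the only point that demands care is the handling of the infinite entries, namely confirming that every $W$-component associated with an infinite bound is empty (so that the barrier genuinely reduces to the single hyperplane $x_i = q$) and that $S$ and its complement take the claimed one-coordinate form. Everything else is a direct transcription of Theorem~\ref{THintcnvbox}.
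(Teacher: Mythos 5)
Your proposal is correct and is exactly the paper's intended argument: the paper itself presents the corollary as the special case of Theorem~\ref{THintcnvbox} with $p_j=-\infty$ for all $j$ and $q_j=+\infty$ for $j\neq i$, which is precisely your instantiation. Your verification that the infinite bounds make all barrier pieces except $W_i^{+}$ empty, and that $S$ and its complement reduce to the half-spaces $\{x_i<q\}$ and $\{z_i\geq q\}$, is the routine bookkeeping the paper leaves implicit.
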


We denote the sets of  nonnegative integers and positive integers
 by $\ZZ_{+}$ and $\ZZ_{++}$,  respectively.
For $\alpha \in \ZZ$ we write $\alpha \ZZ$ for $\{ \alpha x \mid x \in \ZZ \}$.
For vectors $a, b \in \RR\sp{n}$ with $a \leq b$,
$[a,b]_{\RR}$ denotes the interval between $a$ and $b$,
i.e.,
$[a,b]_{\RR} = \{ x \in \RR\sp{n} \mid a \leq x \leq b \}$,
and 
$[a,b]_{\ZZ}$ the integer interval between $a$ and $b$,
i.e.,
$[a,b]_{\ZZ} = \{ x \in \ZZ\sp{n} \mid a \leq x \leq b \}$.

\section{The Scaling Operation for Integrally Convex Functions}
\label{SCscalingIC}

In this section we consider the scaling operation for integrally convex functions.
Recall that, for
$f: \mathbb{Z}^{n} \to \mathbb{R} \cup \{ +\infty  \}$
and $\alpha \in \mathbb{Z}_{++}$,
the $\alpha$-scaling of $f$ is defined to be the 
function $f^{\alpha}: \mathbb{Z}^{n} \to \mathbb{R} \cup \{ +\infty  \}$
given by
$f^{\alpha}(x) = f(\alpha x) $ $(x \in \mathbb{Z}^{n})$.

When $n = 2$, integral convexity is preserved under scaling.
We first deal with integrally convex sets.

\begin{proposition}  \label{PRintcnvsetscdim2}
Let $S \subseteq \mathbb{Z}^{2}$ be an integrally convex set
and $\alpha \in \mathbb{Z}_{++}$.
Then 
$S^{\alpha} = \{ x \in \mathbb{Z}^{2} \mid \alpha x \in S \}$
is an integrally convex set.
\end{proposition}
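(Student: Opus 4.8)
## Proof Proposal

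The plan is to use the characterization of two-dimensional integrally convex sets furnished by Proposition~\ref{PRintcnvsetdim2}, which reduces the problem to tracking how the defining inequalities transform under scaling. Since $S$ is integrally convex, I may write
\[
 S = \{ (x_{1},x_{2}) \in \mathbb{Z}^{2} \mid
 p_{i} x_{1} +  q_{i} x_{2} \leq r_{i} \ (i=1,\ldots,m) \}
\]
with $p_{i}, q_{i} \in \{ -1,0,+1 \}$ and $r_{i} \in \ZZ$. The goal is to produce an analogous description of $S^{\alpha}$, again with coefficients drawn from $\{ -1,0,+1 \}$ and integral right-hand sides, so that Proposition~\ref{PRintcnvsetdim2} applies in the reverse direction to conclude that $S^{\alpha}$ is integrally convex.

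First I would observe that $x \in S^{\alpha}$ means $\alpha x \in S$, which is exactly the system $p_{i} (\alpha x_{1}) + q_{i} (\alpha x_{2}) \leq r_{i}$, i.e. $p_{i} x_{1} + q_{i} x_{2} \leq r_{i}/\alpha$ for all $i$. Because $x$ ranges over integer points and the left-hand side $p_{i} x_{1} + q_{i} x_{2}$ is an integer, the inequality $p_{i} x_{1} + q_{i} x_{2} \leq r_{i}/\alpha$ is equivalent to $p_{i} x_{1} + q_{i} x_{2} \leq \lfloor r_{i}/\alpha \rfloor$. Setting $r_{i}^{\alpha} = \lfloor r_{i}/\alpha \rfloor \in \ZZ$, this yields
\[
 S^{\alpha} = \{ (x_{1},x_{2}) \in \mathbb{Z}^{2} \mid
 p_{i} x_{1} +  q_{i} x_{2} \leq r_{i}^{\alpha} \ (i=1,\ldots,m) \}.
\]
This is precisely a representation of the form required by Proposition~\ref{PRintcnvsetdim2}, with the \emph{same} coefficient vectors $(p_{i},q_{i}) \in \{-1,0,+1\}^{2}$ and new integral bounds $r_{i}^{\alpha}$. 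Invoking the converse direction of that proposition then immediately gives that $S^{\alpha}$ is integrally convex.

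The step I expect to require the most care is the passage from $p_{i} x_{1} + q_{i} x_{2} \leq r_{i}/\alpha$ to the floored integer inequality: this is where the restriction $n=2$ is silently doing its work, since it is the $\{-1,0,+1\}$-coefficient description of Proposition~\ref{PRintcnvsetdim2} — available only in two dimensions — that makes $p_{i}x_{1}+q_{i}x_{2}$ integer-valued and lets the floor operation preserve the exact solution set over $\ZZ^{2}$. The subtlety flagged in Remark~\ref{RMicsetdim2ineq}, that these inequalities need not describe the convex hull, is harmless here: Proposition~\ref{PRintcnvsetdim2} characterizes integral convexity purely through the existence of such an integer-point description, independently of whether it cuts out the convex hull, so I only need to verify that the floored system has exactly $S^{\alpha}$ as its integer solution set, which the integrality of the left-hand side guarantees.
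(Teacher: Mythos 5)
Your proof is correct and follows essentially the same route as the paper's: both represent $S$ via Proposition~\ref{PRintcnvsetdim2}, observe that scaling turns each inequality into $p_{i}x_{1}+q_{i}x_{2}\leq r_{i}/\alpha$, use integrality of the left-hand side to replace the bound by $\lfloor r_{i}/\alpha\rfloor$, and then invoke the converse direction of Proposition~\ref{PRintcnvsetdim2}. Your added remarks on why the floor step is exact and why Remark~\ref{RMicsetdim2ineq} is harmless are accurate but not needed beyond what the paper does.
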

\begin{proof}
By Proposition~\ref{PRintcnvsetdim2} we can assume that $S$ is represented as 
$S = \{ (x_1,x_2) \in \ZZ\sp{2} \mid p_{i} x_{1} +  q_{i} x_{2} \leq r_{i}
\ (i=1,\ldots,m) \}$
for some $p_{i}, q_{i} \in \{ -1,0,+1 \}$ and $r_{i} \in \ZZ$ $(i=1,\ldots,m)$.
Since 
$(y_1, y_2) \in S^{\alpha}$ if and only if
$(\alpha y_1, \alpha y_2) \in S$,
we have
\begin{align*}
S^{\alpha} 
&= \{ (y_1,y_2) \in \ZZ\sp{2} \mid \alpha (p_{i} y_{1} +  q_{i} y_{2}) \leq r_{i}
\ (i=1,\ldots,m) \}
\\ &= 
\{ (y_1,y_2) \in \ZZ\sp{2} \mid p_{i} y_{1} +  q_{i} y_{2} \leq r'_{i}
\ (i=1,\ldots,m) \},
\end{align*}
where
$r'_{i} = \lfloor r_{i} / \alpha \rfloor$  
$(i=1,\ldots,m)$.
By Proposition~\ref{PRintcnvsetdim2}
this implies integral convexity of $S^{\alpha}$. 
\end{proof}

Next we turn to integrally convex functions.

\begin{theorem} \label{THintcnvscdim2}
Let $f: \mathbb{Z}^{2} \to \mathbb{R} \cup \{ +\infty  \}$
 be an integrally convex function and $\alpha \in \mathbb{Z}_{++}$.
Then the scaled function $f^{\alpha}$ is integrally convex.
\end{theorem}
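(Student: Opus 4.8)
The plan is to reduce the claim about integral convexity of $f^{\alpha}$ to the local characterization provided by Theorem~\ref{THfavtarProp33}. That theorem says that, once we know the effective domain $\dom f^{\alpha}$ is integrally convex, it suffices to verify the distance-$2$ midpoint inequality \eqref{intcnvconddist2} for $f^{\alpha}$ at every pair $x,y \in \dom f^{\alpha}$ with $\|x-y\|_\infty = 2$. So the proof splits into two parts: first, checking that $\dom f^{\alpha}$ is integrally convex, and second, verifying the local midpoint condition.

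The first part is immediate. We have $\dom f^{\alpha} = \{ x \in \ZZ^2 \mid \alpha x \in \dom f\} = (\dom f)^{\alpha}$ in the notation of Proposition~\ref{PRintcnvsetscdim2}. Since $f$ is integrally convex, $\dom f$ is an integrally convex set, and by Proposition~\ref{PRintcnvsetscdim2} its scaling $(\dom f)^{\alpha}$ is again integrally convex. Thus the hypothesis of Theorem~\ref{THfavtarProp33} is met for $f^{\alpha}$.

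For the second part I would fix $x, y \in \dom f^{\alpha}$ with $\|x-y\|_\infty = 2$ and set $a = \alpha x$, $b = \alpha y$, so that $a, b \in \dom f$ and I must estimate $\widetilde{f^{\alpha}}((x+y)/2)$ from above by $\tfrac12(f(a)+f(b))$. The midpoint of $x$ and $y$ scales to the midpoint $(a+b)/2$ of $a$ and $b$, and the key observation is that the integer neighborhood of $(x+y)/2$, once multiplied by $\alpha$, lands inside $\dom f$: this is exactly where I expect to use that $\dom f$ is integrally convex in dimension two, via Corollary~\ref{COintconvset-pairofpoints}, to guarantee that the relevant scaled lattice points surrounding $(a+b)/2$ actually lie in $\dom f$. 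Because $\|x-y\|_\infty = 2$, the displacement $x-y$ has each coordinate in $\{-2,0,2\}$, so there are only a few shapes of the pair $(x,y)$ up to the symmetries of Proposition~\ref{PRintcnvinvar} (origin shift, coordinate permutation, and sign inversion), and I would reduce to these canonical cases.

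The heart of the matter, and the step I expect to be the main obstacle, is bounding $\widetilde{f^{\alpha}}((x+y)/2)$: I need to exhibit an explicit convex combination of values $f^{\alpha}(z) = f(\alpha z)$ over $z \in N((x+y)/2)$ whose average of function values is at most $\tfrac12(f(a)+f(b))$, and to produce this I must chain together distance-$2$ midpoint inequalities for the \emph{original} $f$ along a short path of lattice points between $a$ and $b$. In other words, the single midpoint step for $f^\alpha$ at scale $\alpha$ must be synthesized from $O(\alpha)$ local convexity steps for $f$ at scale $1$. Controlling this telescoping — making sure every intermediate point stays in $\dom f$ (here Corollary~\ref{COintconvset-pairofpoints} is essential) and that the inequalities compose to give precisely the factor $\tfrac12$ rather than something weaker — is the genuinely delicate part, and it is exactly the place where the argument is expected to break down for $n \geq 3$, consistent with Example~\ref{EXscalingNG422}. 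Once this chaining is carried out in each canonical case, Theorem~\ref{THfavtarProp33} yields integral convexity of $f^{\alpha}$.
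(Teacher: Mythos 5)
Your overall scaffolding is exactly the paper's: reduce via Proposition~\ref{PRintcnvsetscdim2} (integral convexity of the scaled domain) and Theorem~\ref{THfavtarProp33} to the distance-$2$ midpoint condition for $f^{\alpha}$, then use the symmetries of Proposition~\ref{PRintcnvinvar} to cut down the cases. But there are two problems. First, your case analysis is wrong: $\|x-y\|_{\infty}=2$ only forces each coordinate of $x-y$ into $\{-2,-1,0,1,2\}$ with at least one equal to $\pm 2$, not into $\{-2,0,2\}$ as you claim. Up to the symmetries the canonical displacements are $(2,0)$, $(2,2)$ \emph{and} $(2,1)$; your stated enumeration silently discards $(2,1)$, which is precisely the only nontrivial case (for $(2,0)$ and $(2,2)$ the midpoint is a lattice point and the inequality is just one-dimensional discrete convexity of $t\mapsto f(t,0)$ and $t\mapsto f(t,t)$, which follows directly from integral convexity of $f$).

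Second, and more seriously, the step you yourself identify as ``the heart of the matter'' is never carried out: you say you \emph{must} chain unit-scale midpoint inequalities and that controlling the telescoping is delicate, but you do not exhibit the chain, so the proof stops exactly where the real work begins. The paper resolves this with the basic parallelogram inequality (Proposition~\ref{PRparallelineqdim2}): $f(0,0)+f(a+b,a)\geq f(a,a)+f(b,0)$ for $a,b\in\ZZ_{+}$, applied with $a=b=\alpha$ to settle the $(2,1)$ case, since
$\widetilde{f^{\alpha}}\bigl((x+y)/2\bigr)=\tfrac{1}{2}\bigl(f(\alpha,\alpha)+f(\alpha,0)\bigr)$
when $y-x=(2,1)$. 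Its proof sums the elementary inequality $f_{x}(0,0)+f_{x}(2,1)\geq f_{x}(1,1)+f_{x}(1,0)$ (which is exactly the distance-$2$ condition for the pair $x$, $x+(2,1)$, because $N(x+(1,\tfrac12))=\{x+(1,0),\,x+(1,1)\}$) over the \emph{two-dimensional} grid of base points $x=k(1,1)+l(1,0)$, $0\leq k\leq a-1$, $0\leq l\leq b-1$; the interior terms cancel in a supermodularity-type telescoping, and Corollary~\ref{COintconvset-pairofpoints} guarantees all $ab$ intermediate points lie in $\dom f$ so every term is finite. Note this is an aggregation of $\alpha^{2}$ inequalities over a parallelogram of base points, not the $O(\alpha)$ steps ``along a short path'' you anticipate; a one-dimensional path of local inequalities does not suffice to relate the four corners $(0,0)$, $(2\alpha,\alpha)$, $(\alpha,\alpha)$, $(\alpha,0)$. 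Supplying this lemma (or an equivalent) is what your proposal is missing.
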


\begin{proof}
The effective domain
$\dom f^{\alpha} = ( \dom f \cap (\alpha \ZZ)^{2} )/\alpha$
is an integrally convex set 
by Proposition~\ref{PRintcnvsetscdim2}.
By Theorem~\ref{THfavtarProp33} and Proposition~\ref{PRintcnvinvar},
we only have to check condition (\ref{intcnvconddist2})
for $f^{\alpha}$ with 
$x =(0,0)$ and $y = (2,0)$, $(2, 2)$, $(2, 1)$.
That is, it suffices to show
\begin{align}
& f(0,0) + f(2 \alpha, 0)  \geq  2 f(\alpha,0) , 
\label{prfdim2scal20}
\\
& f(0,0) + f(2 \alpha, 2 \alpha)  \geq  2 f(\alpha,\alpha), 
\label{prfdim2scal22}
\\
& f(0,0) + f(2 \alpha, \alpha)  \geq  f(\alpha,\alpha) + f(\alpha,0). 
\label{prfdim2scal21}
\end{align}
The first two inequalities, (\ref{prfdim2scal20}) and (\ref{prfdim2scal22}),
follow easily from integral convexity of $f$,
whereas (\ref{prfdim2scal21}) is a special case of
the basic parallelogram inequality (\ref{paraineq2dim}) below
with $a = b = \alpha$.
\end{proof}

\begin{proposition} [Basic parallelogram inequality] \label{PRparallelineqdim2}
For an integrally convex function 
$f: \mathbb{Z}^{2} \to \mathbb{R} \cup \{ +\infty  \}$
we have
\begin{equation} \label{paraineq2dim}
 f(0,0) + f(a+b,a) \geq  f(a,a) + f(b,0) 
\qquad (a,b \in \ZZ_{+}). 
\end{equation}
\end{proposition}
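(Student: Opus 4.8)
The plan is to read (\ref{paraineq2dim}) as a supermodularity inequality in transformed coordinates and then to derive it by summing an elementary one-cell inequality over a rectangular grid. Concretely, I would pass to the directions $(1,1)$ and $(1,0)$ by setting $h(s,t) = f(s+t,\,s)$ for $(s,t) \in \ZZ^{2}$. Under this substitution the four points $(0,0)$, $(a,a)$, $(b,0)$, $(a+b,a)$ become the corners $(0,0)$, $(a,0)$, $(0,b)$, $(a,b)$ of an axis-parallel rectangle, and (\ref{paraineq2dim}) is precisely the rectangular supermodularity relation $h(0,0) + h(a,b) \geq h(a,0) + h(0,b)$.

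The heart of the argument is the local (single-cell) inequality
\[
 h(s,t) + h(s+1,t+1) \geq h(s+1,t) + h(s,t+1) \qquad ((s,t) \in \ZZ^{2}),
\]
which in the original coordinates reads $f(p,q) + f(p+2,q+1) \geq f(p+1,q) + f(p+1,q+1)$ with $p=s+t$ and $q=s$. To prove it I would apply Theorem~\ref{THfavtarProp33}(b) to $x=(p,q)$ and $y=(p+2,q+1)$, a pair with $\| x-y \|_{\infty}=2$. Their midpoint is $(p+1,\,q+\tfrac12)$, whose integer neighborhood is exactly $\{(p+1,q),(p+1,q+1)\}$; the only admissible convex combination therefore forces $\tilde f(p+1,q+\tfrac12) = \tfrac12\,[\,f(p+1,q)+f(p+1,q+1)\,]$, so that Theorem~\ref{THfavtarProp33}(b), after multiplication by $2$, becomes the claimed cell inequality.

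A point that needs care is the treatment of $+\infty$ values, because Theorem~\ref{THfavtarProp33}(b) presupposes $x,y \in \dom f$. Inequality (\ref{paraineq2dim}) is trivial unless its left-hand side is finite, i.e.\ unless both $(0,0)$ and $(a+b,a)$ lie in $\dom f$. In that case I would invoke Corollary~\ref{COintconvset-pairofpoints}: a direct verification shows that ${\rm ICH}\big((0,0),(a+b,a)\big)$ contains every lattice point $(s+t,\,s)$ with $0 \leq s \leq a$ and $0 \leq t \leq b$, so the whole rectangle lies in the (integrally convex) effective domain. Hence all $h$-values entering the computation are finite and each cell inequality is a legitimate instance of Theorem~\ref{THfavtarProp33}(b).

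Finally I would telescope. Summing the cell inequality over $0 \leq s \leq a-1$ and $0 \leq t \leq b-1$, the interior terms cancel and the double sum collapses to $h(a,b) - h(a,0) - h(0,b) + h(0,0)$; as every summand is nonnegative, this quantity is nonnegative, which is exactly (\ref{paraineq2dim}). The telescoping itself is routine, so I expect the main obstacle to be the domain bookkeeping: one must ensure that finiteness of the two corner values propagates to the entire rectangle before the cancellation is meaningful, and it is precisely the ${\rm ICH}$ description in Corollary~\ref{COintconvset-pairofpoints} that supplies this.
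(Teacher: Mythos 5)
Your proposal is correct and is essentially the paper's own proof: the change of variables $h(s,t)=f(s+t,s)$ is only a cosmetic relabeling of the paper's summation over $x=k(1,1)+l(1,0)$, and the substance---the single-cell inequality $f(x)+f(x+(2,1))\geq f(x+(1,1))+f(x+(1,0))$ obtained from Theorem~\ref{THfavtarProp33}(b), the domain propagation via Corollary~\ref{COintconvset-pairofpoints}, and the double telescoping sum over the parallelogram---coincides exactly with the paper's argument. Your explicit justification of the cell inequality via the integer neighborhood of the midpoint, and of why ${\rm ICH}((0,0),(a+b,a))$ contains the whole grid, merely spells out details the paper leaves implicit.
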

\begin{proof}
We may assume $a, b \geq 1$ and
$\{ (0,0), (a+b,a) \} \subseteq \dom f$,
since otherwise the inequality (\ref{paraineq2dim}) is trivially true.
Since $\dom f$ is integrally convex, Corollary \ref{COintconvset-pairofpoints} implies that
$k(1,1) +l(1,0) \in \dom f$ for all $(k,l)$ with $0 \leq k \leq a$ and $0 \leq l\leq b$.
We use the notation $f_{x}(z)=f(x+z)$.
For each $x \in \dom f$  we have
\[ 
 f_{x}( 0,0 ) +  f_{x}( 2,1 ) \geq f_{x}( 1,1 ) +  f_{x}( 1,0 )
\] 
by integral convexity of $f$.
By adding these inequalities for $x=k(1,1) +l(1,0)$ 
with $0 \leq k \leq a-1$ and $0 \leq l \leq b-1$,
we obtain (\ref{paraineq2dim}).
Note that all the terms involved in these inequalities are finite,
since $k(1,1) +l(1,0) \in \dom f$ for all $k$ and $l$.
\end{proof}

If $n \geq 3$, $f^{\alpha}$  is not always integrally convex.
This is demonstrated by the following example.

\begin{example} \rm \label{EXscalingNG422}
Consider the integrally convex function
$f: \mathbb{Z}^{3} \to \mathbb{R} \cup \{ +\infty  \}$
defined on $\dom f = [(0,0,0), (4,2,2)]_{\ZZ}$ by
\[
\begin{array}{c|rrrrrl}
  \multicolumn{1}{c}{x_{2}}  &  \multicolumn{5}{c}{f(x_{1},x_{2},0)}  
\\ \cline{2-6}
 2  & 3 & 1 & 1 & 1 & \multicolumn{1}{r|}{3}
\\
 1  & 1 & 0 & 0 & 0 & \multicolumn{1}{r|}{0} 
\\
 0  & 0 & 0 & 0 & 0 & \multicolumn{1}{r|}{3}
\\ \cline{1-6}
  & 0 & 1 & 2 & 3 & 4 & x_{1} 
\end{array}  
\quad
\begin{array}{c|rrrrrl}
  \multicolumn{1}{c}{x_{2}}  &  \multicolumn{5}{c}{f(x_{1},x_{2},1)}  
\\ \cline{2-6}
 2  & 2 & 1 & 0 & 0 & \multicolumn{1}{r|}{0}
\\
 1  & 1 & 0 & 0 & 0 & \multicolumn{1}{r|}{0} 
\\
 0  & 0 & 0 & 0 & 0 & \multicolumn{1}{r|}{0}
\\ \cline{1-6}
  & 0 & 1 & 2 & 3 & 4 & x_{1} 
\end{array}  
\quad
\begin{array}{c|rrrrrl}
  \multicolumn{1}{c}{x_{2}}  &  \multicolumn{5}{c}{f(x_{1},x_{2},2)}  
\\ \cline{2-6}
 2  & 3 & 2 & 1 & 0 & \multicolumn{1}{r|}{0}
\\
 1  & 2 & 1 & 0 & 0 & \multicolumn{1}{r|}{0} 
\\
 0  & 3 & 0 & 0 & 0 & \multicolumn{1}{r|}{3}
\\ \cline{1-6}
  & 0 & 1 & 2 & 3 & 4 & x_{1} 
\end{array}  
\]
For the scaling with $\alpha = 2$, we have a failure of
integral convexity.
Indeed, for $x = (0,0,0)$ and $y = (2, 1, 1)$ we have
\begin{align*}
\widetilde {f^{\alpha}} \, \bigg(\frac{x + y}{2} \bigg)
&= \min \{
\frac{1}{2}f^{\alpha}(1,1,1) + \frac{1}{2}f^{\alpha}(1,0,0), \ 
\frac{1}{2}f^{\alpha}(1,1,0) + \frac{1}{2}f^{\alpha}(1,0,1) \}
\\
 & = \frac{1}{2}  \min \{ f(2,2,2) + f(2,0,0), \  f(2,2,0) + f(2,0,2) \}  
\\
 & = \frac{1}{2}  \min \{ 1+ 0, \   1 + 0  \} = \frac{1}{2}
\\
&  > 0 = \frac{1}{2} ( f(0,0,0) + f(4,2,2) ) 
  = \frac{1}{2} (f^{\alpha}(x) + f^{\alpha}(y)),
\end{align*}
which shows the failure of (\ref{intcnvconddist2}) in Theorem \ref{THfavtarProp33}.
The set
$S = \argmin f = \{ x \mid f(x)=0 \}$
is an integrally convex set, and 
$S^{\alpha} = \{ x \mid \alpha x \in S \}
= \{ (0,0,0), (1,0,0), (1,0,1), (2,1,1) \}$
is not an integrally convex set.
\finbox
\end{example}

In view of the fact that the class of ${\rm L}^{\natural}$-convex functions
is stable under scaling, while this is not true for the superclass
of integrally convex functions,
we are naturally led to the question of 
finding an intermediate class of functions that is stable under scaling.
See Section~\ref{SCconrem} for this issue.

\section{Preliminary Discussion on Proximity Theorems}
\label{SCpredisproxIC}

Let $f: \mathbb{Z}^{n} \to \mathbb{R} \cup \{ +\infty  \}$ 
and $\alpha \in \ZZ_{++}$.
We say that $x^{\alpha} \in \dom f$ is an {\em $\alpha$-local minimizer} of $f$
(or {\em $\alpha$-local minimal} for $f$)
if $f(x^{\alpha}) \leq f(x^{\alpha}+ \alpha d)$
for all $d \in  \{ -1,0, +1 \}^{n}$.
In general terms a proximity theorem states that
for $\alpha \in \ZZ_{++}$ there exists an integer
$B(n,\alpha) \in \ZZ_{+}$ such that
if $x^{\alpha}$ is an $\alpha$-local minimizer of $f$,
then there exists a minimizer $x^{*}$ of $f$ satisfying 
$ \| x^{\alpha} - x^{*}\|_{\infty} \leq B(n,\alpha)$,
where $B(n,\alpha)$ is called the {\em proximity distance}.

Before presenting a proximity theorem
for integrally convex functions in Section~\ref{SCproxthmICgeneral},
we establish in this section 
lower bounds for the proximity distance.
We also present a proximity theorem for $n=2$,
as the proof is fairly simple in this particular case, 
though the proof method does not extend to general $n \geq 3$.

\subsection{Lower bounds for the proximity distance}

The following examples provide us with lower bounds for the proximity distance.
The first three demonstrate the tightness of the bounds 
for separable convex functions,
${\rm L}^{\natural}$-convex and ${\rm M}^{\natural}$-convex functions given in 
Theorems \ref{THsepfnproximity}, \ref{THlfnproximity} and \ref{THmfnproximity}, respectively.

\begin{example}[Separable convex function] \rm \label{EXsepfnproximitytight}
Let $\varphi(t) = \max(-t, (\alpha -1)(t - \alpha) )$ for $t \in \ZZ$
and define 
$f(x) = \varphi(x_{1}) + \cdots + \varphi(x_{n})$,
which is separable convex.
This function has a unique minimizer at $x^{*}=(\alpha -1, \ldots, \alpha -1)$,
whereas $x^{\alpha} = \veczero$ is  $\alpha$-local minimal
and $ \| x^{\alpha} - x^{*}\|_{\infty} = \alpha - 1$.
This shows the tightness of the bound $\alpha - 1$ given in Theorem~\ref{THsepfnproximity}.
\finbox
\end{example}

\begin{example}[${\rm L}^{\natural}$-convex function] \rm \label{EXproxtigntLnat}
Consider $X \subseteq \ZZ^{n}$ defined by
\begin{align*}
X &= \{ x \in \ZZ^{n} \mid 
0 \leq x_{i} - x_{i+1} \leq \alpha -1 \ (i=1,\ldots,n-1), \ 
0 \leq x_{n} \leq \alpha -1  \}
\\
 &= \{ x \in \ZZ^{n} \mid 
   x = \sum_{i=1}^{n} \mu_{i} \unitvec{ \{ 1,2,\ldots, i \} },  \ \ 
   0 \leq \mu_{i} \leq \alpha -1 \ (i=1,\ldots,n)   
 \},
\end{align*}
where
$ \unitvec{ \{ 1,2,\ldots, i \} } 
= ( \overbrace{1,1,\ldots, 1}^{i}, 0,0,\ldots, 0)$. 
The function $f$ defined by $f(x)=-x_{1}$ on $\dom f =X$ is
an ${\rm L}^{\natural}$-convex function
and  has a unique minimizer at 
$x^{*} =  (n (\alpha -1),(n-1)(\alpha -1),\ldots,2(\alpha -1),\alpha -1)$.
On the other hand, $x^{\alpha} = \veczero$ is  $\alpha$-local minimal,
since
$X \cap \{ -\alpha, 0,\alpha \}^{n} = \{ \bm{0} \}$.
We have $ \| x^{\alpha} - x^{*}\|_{\infty} = n (\alpha - 1)$,
which shows the tightness of the bound $n (\alpha - 1)$ given in Theorem~\ref{THlfnproximity}.
This example is a reformulation of \cite[Remark~2.3]{MT02proxRIMS} 
for {\rm L}-convex functions to ${\rm L}^{\natural}$-convex functions.
\finbox
\end{example}

\begin{example}[${\rm M}^{\natural}$-convex function] \rm \label{EXproxtigntMnat}
Consider $X \subseteq \ZZ^{n}$ defined by
\begin{align*}
X &= \{ x \in \ZZ^{n} \mid 
  0 \leq  x_{1} + x_{2} + \cdots + x_{n} \leq \alpha -1 , \  
  -(\alpha -1) \leq x_{i} \leq 0 \ (i=2,\ldots,n) \}
\\
 &= \{ x \in \ZZ^{n} \mid 
 x = (\mu_{1} + \mu_{2} + \cdots + \mu_{n}, -\mu_{2},-\mu_{3}, \ldots, -\mu_{n}), \ \ 
   0 \leq \mu_{i} \leq \alpha -1 \ (i=1,\ldots,n)   
 \} .
\end{align*}
The function $f$ defined by $f(x)=-x_{1}$ on $\dom f =X$ is
an ${\rm M}^{\natural}$-convex function
and has a unique minimizer at 
$x^{*} = (n (\alpha -1),-(\alpha -1), -(\alpha -1),\ldots,-(\alpha -1))$.
On the other hand, $x^{\alpha} = \veczero$ is  $\alpha$-local minimal,
since
$X \cap \{ -\alpha, 0,\alpha \}^{n} = \{ \bm{0} \}$.
We have $ \| x^{\alpha} - x^{*}\|_{\infty} = n (\alpha - 1)$,
which shows the tightness of the bound $n (\alpha - 1)$ given in Theorem~\ref{THmfnproximity}.
This example is a reformulation of \cite[Remark~2.8]{MT02proxRIMS} 
for {\rm M}-convex functions to ${\rm M}^{\natural}$-convex functions.
\finbox
\end{example}

For integrally convex functions with $n \geq 3$, 
the bound $n (\alpha -1)$ is no longer valid.
This is demonstrated by the following examples.

\begin{example}  \rm \label{EXproxNG422}
Consider an integrally convex function
$f: \mathbb{Z}^{3} \to \mathbb{R} \cup \{ +\infty  \}$
defined on
$\dom f = [(0,0,0), (4,2,2)]_{\ZZ}$ by
\addtolength{\arraycolsep}{-1pt}%
\[
\begin{array}{c|rrrrrl}
  \multicolumn{1}{c}{x_{2}}  &  \multicolumn{5}{c}{f(x_{1},x_{2},0)}  
\\ \cline{2-6}
 2  & 5 & 1 & 0 & 0 & \multicolumn{1}{r|}{4}
\\
 1  & 2 &-1 &-2 & 0 & \multicolumn{1}{r|}{3} 
\\
 0  & 0 &-1 & 0 & 1 & \multicolumn{1}{r|}{6}
\\ \cline{1-6}
  & 0 & 1 & 2 & 3 & 4 & x_{1} 
\end{array}  
\quad
\begin{array}{c|rrrrrl}
  \multicolumn{1}{c}{x_{2}}  &  \multicolumn{5}{c}{f(x_{1},x_{2},1)}  
\\ \cline{2-6}
 2  & 4 & 1 &-2 &-3 & \multicolumn{1}{r|}{-1}
\\
 1  & 2 &-1 &-2 &-3 & \multicolumn{1}{r|}{-1} 
\\
 0  & 2 &-1 &-2 & 0 & \multicolumn{1}{r|}{5}
\\ \cline{1-6}
  & 0 & 1 & 2 & 3 & 4 & x_{1} 
\end{array}  
\quad
\begin{array}{c|rrrrrl}
  \multicolumn{1}{c}{x_{2}}  &  \multicolumn{5}{c}{f(x_{1},x_{2},2)}  
\\ \cline{2-6}
 2  & 6 & 3 & 0 &-3 & \multicolumn{1}{r|}{-4}
\\
 1  & 6 & 1 &-2 &-3 & \multicolumn{1}{r|}{1} 
\\
 0  & 6 & 2 & 0 & 3 & \multicolumn{1}{r|}{6}
\\ \cline{1-6}
  & 0 & 1 & 2 & 3 & 4 & x_{1} 
\end{array}  
\]
\addtolength{\arraycolsep}{1pt}%
and let $\alpha = 2$.
For $x^{\alpha}=(0,0,0)$ we have
$f(x^{\alpha})=0$ and
$f(x^{\alpha}) \leq f(x^{\alpha}+ 2 d)$
for $d=(1,0,0), (0,1,0)$, $(0,0,1)$, $(1,1,0), (1,0,1), (0,1,1), (1,1,1)$.
Hence $x^{\alpha}=(0,0,0)$ is  $\alpha$-local minimal.
A unique (global) minimizer of $f$ is located at $x^{*}=(4,2,2)$ with
$f(x^{*})=-4$ and $\| x^{\alpha} - x^{*}  \|_{\infty} = 4$.
The $\ell_{\infty}$-distance between 
$x^{\alpha}$ and $x^{*}$ is strictly larger than $n (\alpha -1) = 3$.
We remark that the scaled function $f^{\alpha}$
is not integrally convex.
\finbox
\end{example}

The following example demonstrates a quadratic lower bound in $n$
for the proximity distance for integrally convex functions.

\begin{example} \rm \label{EXproxtigntM2}
For a positive integer $m \geq 1$, we consider two bipartite graphs 
$G_{1}$ and $G_{2}$ on vertex bipartition
$(\{ 0^{+}, 1^{+},\ldots,m^{+} \}, \{ 0^{-}, 1^{-},\ldots,m^{-} \})$;
see Fig.~\ref{FGm2proxtight}.
The edge sets of $G_{1}$  and $G_{2}$ are defined respectively as
$E_{1} = \{ (0^{+}, 0^{-}) \} \cup \{ (i^{+}, j^{-}) \mid i,j=1,\ldots,m \}$
and
$E_{2} = \{ (0^{+}, j^{-}) \mid j=1,\ldots,m \} 
   \cup \{ (i^{+}, 0^{-}) \mid i=1,\ldots,m \}$.
Let 
$V^{+} = \{ 1^{+},\ldots,m^{+} \}$, 
$V^{-} = \{ 1^{-},\ldots,m^{-} \}$,
and $n = 2m + 2$.
Consider $X_{1}, X_{2} \subseteq \ZZ^{n}$ defined by
\begin{align*}
  X_{1} &= \left\{ \sum_{i=1}^{m}\sum_{j=1}^{m}
      \lambda_{ij}(\unitvec{i^{+}}{-}\unitvec{j^{-}}) +
      \lambda_{0} (\unitvec{0^{+}}{-}\unitvec{0^{-}})\;
      \begin{array}{|l}
       \lambda_{ij} \in [0,\alpha -1]_{\ZZ} \; (i,j=1,\ldots,m) \\
       \lambda_{0} \in [0,m^{2}(\alpha -1)]_{\ZZ}
      \end{array} \right\},
\\
  X_{2} &= \left\{ \sum_{i=1}^{m}\mu_{i}(\unitvec{i^{+}}{-}\unitvec{0^{-}}) +
       \sum_{j=1}^{m}\nu_{j}(\unitvec{0^{+}}{-}\unitvec{j^{-}})\;
      \begin{array}{|l}
       \mu_{i} \in [0,m(\alpha -1)]_{\ZZ} \; (i=1,\ldots,m) \\
       \nu_{j} \in [0,m(\alpha -1)]_{\ZZ} \; (j=1,\ldots,m)
      \end{array} \right\},
\end{align*}
where $X_{1}$ and $X_{2}$ represent the sets of boundaries of flows in $G_{1}$ and $G_{2}$, respectively.
We define functions 
$f_{1},f_{2} : \ZZ^{n} \to \RR \cup \{ +\infty \}$ 
with 
$\dom f_{1} = X_{1}$ and $\dom f_{2} = X_{2}$
by
\[
   f_{1}(x) = \left\{ \begin{array}{ll}
    x(V^{-})  & (x \in X_{1}) , \\
    +\infty     & (x \not\in X_{1}),
   \end{array}\right.
   \quad
   f_{2}(x) = \left\{ \begin{array}{ll}
    x(V^{-})  & (x \in X_{2}) , \\
    +\infty     & (x \not\in X_{2})
   \end{array}\right.
   \qquad (x \in \ZZ^{n}),
\]
where $x(U) = \sum_{u \in U} x_{u}$ for any set $U$ of vertices. 
Both $f_{1}$ and $f_{2}$ are {\rm M}-convex, 
and hence $f = f_{1} + f_{2}$ is an ${\rm M}_{2}$-convex function, 
which is integrally convex (see \cite[Section~8.3.1]{Mdcasiam}).
We have
$\dom f = \dom f_{1} \cap \dom f_{2} = X_{1} \cap X_{2}$
and $f$ is linear on $\dom f$.
As is easily verified, $f$ has a unique minimizer at $x^{*}$ defined by
\[
 x^{*}_{u} = \left\{\begin{array}{rl}
             m(\alpha-1) & (u \in V^{+}), \\
            -m(\alpha-1) & (u \in V^{-}), \\
       m^{2}(\alpha-1) & (u = 0^{+}), \\
      -m^{2}(\alpha-1) & (u = 0^{-}),
    \end{array}\right.
\]
which corresponds to 
$\lambda_{0} = m^{2}(\alpha -1)$, \ 
$\lambda_{ij} = \alpha -1$, \ 
$\mu_{i} = \nu_{j} = m(\alpha -1)$ \ $(i,j=1,\ldots,m)$.
We mention that the function $f$ here is constructed in \cite[Remark~2.19]{MT02proxRIMS}
for a slightly different purpose (i.e., for ${\rm M}_{2}$-proximity theorem).

\begin{figure}\begin{center}
\includegraphics[height=35mm]{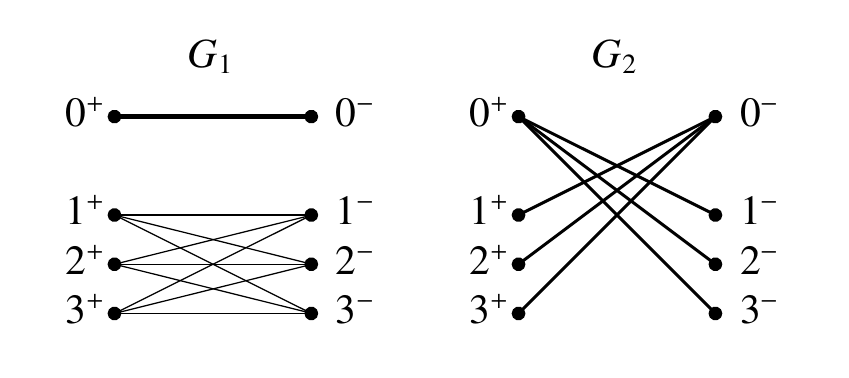}
\caption{Example for $O(n\sp{2})$ lower bound for proximity distance ($m=3$).}
\label{FGm2proxtight}
\end{center}\end{figure}

Let $x^{\alpha} = \veczero$.
Obviously, $\veczero \in \dom f$. Moreover, 
$x^{\alpha} = \veczero$ is  $\alpha$-local minimal,
since 
$\dom f \cap \{ -\alpha, 0,\alpha \}^{n} = \{ \bm{0} \}$,
as shown below.
Since
$\| x^{*}-x^{\alpha}\|_{\infty} = m^{2}(\alpha -1)
= (n-2)^{2}(\alpha -1)/4$, 
we obtain a quadratic lower bound 
$(n-2)^{2}(\alpha -1)/4$
for the proximity distance for integrally convex functions.

The proof of 
$\dom f \cap \{ -\alpha, 0,\alpha \}^{n} = \{ \bm{0} \}$
 goes as follows.
Let $x \in X_{1} \cap X_{2} \cap \{ -\alpha, 0,\alpha \}^{n}$.
We have $x_{0^{+}} \in \{ 0, \alpha \}$ and $x_{0^{-}} \in \{ 0, -\alpha \}$.
We consider four cases to conclude that $x = \bm{0}$.

(i) Case of $x_{0^{+}} = x_{0^{-}} = 0$: 
The structure of $X_{2}$ forces $x = \bm{0}$.

(ii) Case of  $x_{0^{+}} = \alpha$, $x_{0^{-}} = 0$: 
The structure of $X_{2}$ forces
$x_{i^{+}} = 0$ for $i=1, \ldots, m$ and
\[
x_{j^{-}} = 
 \left\{\begin{array}{rl}
            -\alpha & (j = j_{0}) , \\
             0 &   (j \not= j_{0}) \\
    \end{array}\right.
\]
for some $j_{0}$ $(1 \leq j_{0} \leq m$), 
but this is impossible by the structure of $X_{1}$.

(iii) Case of $x_{0^{+}} =  0$, $x_{0^{-}} = -\alpha$: 
The proof is similar to that of (ii) above.

(iv)  Case of  $x_{0^{+}} = \alpha$, $x_{0^{-}} = -\alpha$:
The structure of $X_{2}$ forces 
\[
x_{i^{+}} = 
 \left\{\begin{array}{rl}
             \alpha & (i = i_{0}) , \\
             0 &   (i \not= i_{0}) , \\
    \end{array}\right.
\quad
x_{j^{-}} = 
 \left\{\begin{array}{rl}
            -\alpha & (j = j_{0}), \\
             0 &   (j \not= j_{0}) \\
    \end{array}\right.
\]
for some $i_{0}$ $(1 \leq i_{0} \leq m$) and $j_{0}$ $(1 \leq j_{0} \leq m$),
but this is impossible by the structure of $X_{1}$.
\finbox
\end{example}

We have seen that the proximity theorem with the linear bound $n (\alpha -1)$
does not hold for all integrally convex functions.
Then a natural question arises:
can we establish a proximity theorem at all
by enlarging the proximity bound?
This question is answered in the affirmative 
in Section~\ref{SCproxthmICgeneral}.

\subsection{A proximity theorem for integrally convex functions with $n=2$}

In the case of $n=2$ the proximity bound
$n (\alpha - 1) = 2 (\alpha - 1)$ is valid
for integrally convex functions%
\footnote{
Recall that $n=3$ in Example \ref{EXproxNG422}.
}. 

\begin{theorem}  \label{THproxintcnvdim2alpha}
Let $f: \mathbb{Z}^{2} \to \mathbb{R} \cup \{ +\infty  \}$
be an integrally convex function,
$\alpha \in \mathbb{Z}_{++}$, 
and $x^{\alpha} \in \dom f$.
If $f(x^{\alpha}) \leq f(x^{\alpha}+ \alpha d)$ for all 
$d \in  \{ -1,0, +1 \}^{2}$,
then there exists a minimizer 
$x^{*} \in \ZZ^{2}$ of $f$ with
$  \| x^{\alpha} - x^{*}  \|_{\infty} \leq 2 (\alpha - 1)$.
\end{theorem}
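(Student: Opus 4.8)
The plan is to argue by contradiction after reducing to a normalized position with the invariances of integral convexity, and then to combine the basic parallelogram inequality (Proposition~\ref{PRparallelineqdim2}) with one-dimensional convexity of $f$ along the diagonal and along a coordinate axis. First I would normalize. By origin-shift invariance (Proposition~\ref{PRintcnvinvar}(1)) I may assume $x^{\alpha}=\veczero$. Assuming $\arg\min f\neq\emptyset$ (otherwise there is nothing to prove), I pick a minimizer $x^{*}$ with $\|x^{*}\|_{\infty}$ smallest, breaking ties by smallest $\|x^{*}\|_{1}$, and suppose for contradiction that $s:=\|x^{*}\|_{\infty}\ge 2\alpha-1$. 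Because the index set $\{-1,0,+1\}^{2}$ is invariant under sign changes and permutation of coordinates, both integral convexity and the hypothesis $f(\veczero)\le f(\alpha d)$ $(d\in\{-1,0,+1\}^{2})$ are preserved under the maps of Proposition~\ref{PRintcnvinvar}(2),(3); hence I may assume $x^{*}=(s,t)$ with $0\le t\le s=\|x^{*}\|_{\infty}$, and in particular $f(0,0)\le f(\alpha,\alpha)$ and $f(0,0)\le f(\alpha,0)$ still hold.

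Two ingredients then drive the contradiction. \emph{Parallelogram.} Proposition~\ref{PRparallelineqdim2} with $a=t$, $b=s-t\ (\ge 0)$ yields
\[
 f(0,0)+f(s,t)\ \ge\ f(t,t)+f(s-t,0),
\]
where all four terms are finite because $(t,t)$ and $(s-t,0)$ lie in ${\rm ICH}(\veczero,x^{*})\subseteq\dom f$ by Corollary~\ref{COintconvset-pairofpoints}. \emph{Propagation.} Integral convexity makes $k\mapsto f(k,k)$ convex on $\{0,\dots,t\}$ and $k\mapsto f(k,0)$ convex on $\{0,\dots,s-t\}$ (these segments lie in $\dom f$, again by Corollary~\ref{COintconvset-pairofpoints}); since the difference quotients of a convex function are nondecreasing and the inequalities $f(0,0)\le f(\alpha,\alpha)$, $f(0,0)\le f(\alpha,0)$ give a nonnegative first slope, I get $f(0,0)\le f(t,t)$ whenever $t\ge\alpha$, and $f(0,0)\le f(s-t,0)$ whenever $s-t\ge\alpha$.

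I would finish with a short case analysis on $t$, using $s\ge 2\alpha-1$ to make the cases exhaustive. If $t\le\alpha-1$, then $s-t\ge\alpha$, so propagation gives $f(0,0)\le f(s-t,0)$, and the parallelogram inequality yields $f(t,t)\le f(0,0)+f(s,t)-f(s-t,0)\le f(s,t)$; thus $(t,t)$ is a minimizer with $\|(t,t)\|_{\infty}=t\le\alpha-1<s$. Symmetrically, if $s-t\le\alpha-1$, then $t\ge\alpha$ and the same computation makes $(s-t,0)$ a minimizer with $\|(s-t,0)\|_{\infty}=s-t<s$. Finally, if $t\ge\alpha$ and $s-t\ge\alpha$, both propagated inequalities hold and the parallelogram inequality forces $f(s,t)\ge f(t,t)+f(s-t,0)-f(0,0)\ge f(0,0)$, so $\veczero$ itself is a minimizer. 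In every case I obtain a minimizer strictly $\ell_{\infty}$-closer to $x^{\alpha}$ than $x^{*}$, contradicting the choice of $x^{*}$; hence $s\le 2(\alpha-1)$.

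The main obstacle is choosing the decomposition of $x^{*}$ correctly: it must be aligned with the two directions $(1,1)$ and $(1,0)$ of the parallelogram inequality, so that the propagation lemma transports the single local inequalities $f(0,0)\le f(\alpha,\alpha),f(\alpha,0)$ to the two corners $(t,t)$ and $(s-t,0)$, and the threshold $\alpha$ appearing there must mesh exactly with $s\ge 2\alpha-1$ to cover all values of $t$. This argument is genuinely two-dimensional, resting on Proposition~\ref{PRparallelineqdim2} and the $\{-1,0,+1\}$-structure of Corollary~\ref{COintconvset-pairofpoints}, neither of which is available for $n\ge 3$. (Alternatively, one could first strengthen the hypothesis to $f(\veczero)\le f(\alpha d)$ for all $d\in\ZZ^{2}$ via scaling stability, Theorem~\ref{THintcnvscdim2}, together with Theorem~\ref{THfavtarProp33}, or recast the propagation step through the hyperplane-barrier property of Corollary~\ref{COintcnvcutal}; the line-convexity argument above is the most direct route.)
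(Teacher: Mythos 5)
Your core computation coincides with the paper's: the same use of the basic parallelogram inequality (Proposition~\ref{PRparallelineqdim2}) along the directions $(1,1)$ and $(1,0)$, the same propagation of the two local inequalities $f(\veczero)\le f(\alpha,\alpha)$ and $f(\veczero)\le f(\alpha,0)$ via one-dimensional discrete convexity, and the same case split at the threshold $\alpha$, exhaustive precisely because $s\ge 2\alpha-1$ (the paper runs it with $a=k$, $b=2\alpha-1-k$ on the wall $x_{1}=2\alpha-1$; you run it with $a=t$, $b=s-t$ at a minimizer). The organizational difference is that the paper then invokes the hyperplane-barrier property (Corollary~\ref{COintcnvcutal}) to spread the bound over the whole sector, while you argue by contradiction from a minimizer of smallest $\ell_{\infty}$-norm.

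There is, however, a genuine gap in your organization: you assume $\argmin f\neq\emptyset$ and dismiss the alternative with ``otherwise there is nothing to prove.'' That dismissal is incorrect. The theorem \emph{asserts} the existence of a global minimizer within distance $2(\alpha-1)$; if $\argmin f=\emptyset$, the conclusion is false, not vacuous, so ruling this case out is part of what must be proved (the general Theorem~\ref{THproxintcnv} makes this explicit by stating ``$\argmin f\neq\emptyset$ and \dots''). Since $\dom f$ may be unbounded, attainment of the infimum is not automatic, and your minimal-counterexample scheme cannot even start without it. The paper's barrier-style organization is what delivers existence: the minimum $\mu$ of $f$ over the finite triangle $S$ is shown to be a lower bound for $f$ on the whole sector, hence a global minimum. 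Fortunately, your own inequalities close the gap, because nothing in your three cases uses that $(s,t)$ is a minimizer: for \emph{any} $(s,t)\in\dom f$ with $0\le t\le s$ and $s\ge 2\alpha-1$, they give $f(s,t)\ge f(t,t)$, or $f(s,t)\ge f(s-t,0)$, or $f(s,t)\ge f(\veczero)$, and in each case the comparison point lies in $[\veczero,(\alpha-1)\bm{1}]_{\ZZ}$. Hence the infimum of $f$ over the sector $\{0\le x_{2}\le x_{1}\}$ equals its minimum over the finite triangle $\{0\le x_{2}\le x_{1}\le 2(\alpha-1)\}$, which is attained; applying your symmetry normalization to all eight sectors yields both existence and the bound. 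Recast this way (directly, rather than by contradiction), your proof is correct, and it has the modest advantage of avoiding Corollary~\ref{COintcnvcutal} altogether.
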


\begin{proof}
We may assume $\alpha \geq 2$ and $x^{\alpha} = \bm{0}$ 
by Proposition \ref{PRintcnvinvar} (1).
Define 
\begin{align*}
C &= \{ (x_{1},x_{2}) \in \ZZ^{2} \mid 0 \leq x_{2} \leq x_{1} \},
\\
S &= \{ (x_{1},x_{2}) \in \ZZ^{2} \mid 0 \leq x_{2} \leq x_{1} \leq 2 (\alpha -1) \}.
\end{align*}
Let $\mu$ be the minimum of $f(x_{1},x_{2})$ over $(x_{1},x_{2}) \in S$
and let $(\hat x_{1},\hat x_{2})$ be a point in $S$ with 
$f(\hat x_{1},\hat x_{2})=\mu$.
Then 
\begin{equation} \label{prfT0al}
f(x_{1},x_{2}) \geq \mu 
\qquad
((x_{1},x_{2}) \in S).
\end{equation}
We will show that
\begin{equation} \label{prfT1al}
  f(2 \alpha - 1,k) \geq \mu 
\qquad (0 \leq k \leq 2 \alpha -1) .
\end{equation}
Then, by Corollary~\ref{COintcnvcutal} 
(hyperplane-barrier property), it follows 
that 
$f(z_{1},z_{2}) \geq \mu$ for all $(z_{1},z_{2}) \in C$, 
that is, 
there is no $(z_{1},z_{2}) \in C \setminus S$ with 
$f(z_{1},z_{2}) <  \mu$.
This proves the claim of the theorem,
since $\ZZ^{2}$ can be covered by eight sectors similar to $C$
and Proposition~\ref{PRintcnvinvar} holds.

The basic parallelogram inequality (\ref{paraineq2dim})
with $a = k$ and $b =2 \alpha -1-k$ yields
\begin{equation} \label{prfT2al}
 f(0,0) + f(2 \alpha - 1,k) \geq  f(k,k) + f(2 \alpha -1-k,0) .
\end{equation}

\paragraph{Case 1: $0 \leq k \leq \alpha -1$.}
Since $2 \alpha -1-k \geq \alpha$, 
by convexity of $f(t,0)$ in $t$, we have
\[
\frac{1}{2 \alpha -1-k}[f(2 \alpha -1-k,0) - f(0,0)] 
\geq \frac{1}{\alpha}[f(\alpha,0) - f(0,0)] \geq 0.
\]
On the other hand, $f(k,k) \geq \mu$ by (\ref{prfT0al}).
Then it follows from (\ref{prfT2al}) that
\[
 f(2 \alpha - 1,k) \geq  f(k,k) +[ f(2 \alpha -1-k,0) - f(0,0) ]  \geq  \mu.
\]

\paragraph{Case 2: $\alpha \leq k \leq 2 \alpha -1$.}
Since $k \geq \alpha$, 
by convexity of $f(t,t)$ in $t$, we have
\[
\frac{1}{k}[f(k,k) - f(0,0)] 
\geq \frac{1}{\alpha}[f(\alpha,\alpha) - f(0,0)] \geq 0.
\]
On the other hand, $f(2 \alpha -1-k,0) \geq \mu$ by (\ref{prfT0al}).
Then it follows from (\ref{prfT2al}) that
\[
 f(2 \alpha - 1,k) \geq  f(2 \alpha -1-k,0) + [ f(k,k) - f(0,0) ]  \geq  \mu.
\]

We have thus shown (\ref{prfT1al}), completing the proof of 
Theorem~\ref{THproxintcnvdim2alpha}.
\end{proof}

\section{A Proximity Theorem for Integrally Convex Functions}
\label{SCproxthmICgeneral}

In this section we establish a proximity theorem 
for integrally convex functions in an arbitrary number of variables.

\subsection{Main result}

\begin{theorem}  \label{THproxintcnv}
Let $f: \mathbb{Z}^{n} \to \mathbb{R} \cup \{ +\infty  \}$
be an integrally convex function,
$\alpha \in \mathbb{Z}_{++}$, 
and $x^{\alpha} \in \dom f$.

\noindent
{\rm (1)}
If 
\begin{equation}\label{icproxlocopt0}
f(x^{\alpha}) \leq f(x^{\alpha}+ \alpha d)
\qquad  
(\forall \ d \in  \{ -1,0, +1 \}^{n}),
\end{equation}
then $\argmin f \not= \emptyset$ and
there exists $x^{*} \in \argmin f$ with
\begin{equation}\label{icproximity0}
 \| x^{\alpha} - x^{*}\|_{\infty} \leq \beta_{n} (\alpha - 1) ,
\end{equation}
where $\beta_{n}$ is defined by 
\begin{equation}\label{eqMrec1}
\beta_{1}=1, \quad \beta_{2}=2; \qquad  
 \beta_{n} =  \frac{n+1}{2} \beta_{n-1}  + 1
\quad (n=3,4,\ldots).
\end{equation}

\noindent
{\rm (2)}
The coefficient $\beta_{n}$ of the proximity bound satisfies
\begin{equation}\label{eqMrec2est}
 \beta_{n} \leq \frac{(n+1)!}{2^{n-1}} 
\qquad (n=3,4,\ldots).
\end{equation}
\finbox
\end{theorem}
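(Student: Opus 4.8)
The plan is to prove part (1) by induction on $n$, using the box-barrier property (Theorem~\ref{THintcnvbox}) to confine a global minimizer to a bounded region around $x^{\alpha}$, and then part (2) by a routine estimate of the recurrence~(\ref{eqMrec1}). By Proposition~\ref{PRintcnvinvar}(1) we may assume $x^{\alpha}=\bm{0}$, and we may assume $\alpha\geq 2$ since $\alpha=1$ makes $x^{\alpha}$ itself a minimizer by Theorem~\ref{THintcnvlocopt}. The base cases $n=1$ and $n=2$ are already covered: $n=1$ is elementary (univariate discrete convexity) and $n=2$ is exactly Theorem~\ref{THproxintcnvdim2alpha}, giving $\beta_{1}=1$ and $\beta_{2}=2$.

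For the inductive step, suppose the bound holds for dimension $n-1$ with coefficient $\beta_{n-1}$. First I would set up the box-barrier machinery: let $\mu$ be the minimum of $f$ over the cube $[-\beta_{n}(\alpha-1)\bm{1},\,\beta_{n}(\alpha-1)\bm{1}]_{\ZZ}$ and let $\hat x$ attain it. The goal is to show $\hat x$ is a global minimizer, which by Theorem~\ref{THintcnvbox} reduces to showing $f(\hat x)\leq f(y)$ for every $y$ lying on one of the $2n$ bounding facets $W_{i}^{\pm}$ of a suitable box $S$. The key idea is that each facet is (a translate of) an $(n-1)$-dimensional problem: restricting an integrally convex function to a coordinate hyperplane $\{x_{i}=c\}$ yields an integrally convex function of $n-1$ variables (this follows from the definition of the local convex extension, since the integer neighborhood of a point in the hyperplane stays within that hyperplane when the $i$-th coordinate is an integer). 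On such a facet one has $\alpha$-local minimality inherited from~(\ref{icproxlocopt0}), so the induction hypothesis bounds the distance from the facet's $\alpha$-local minimizer to a facet-minimizer by $\beta_{n-1}(\alpha-1)$. The recurrence coefficient $\tfrac{n+1}{2}$ should emerge from combining this $(n-1)$-dimensional displacement with the extra $\alpha$-step needed to reach the facet and then tallying the contributions across directions; the arithmetic is where the factor $\beta_{n}=\tfrac{n+1}{2}\beta_{n-1}+1$ is forced.

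The hard part will be the geometric bookkeeping in the inductive step: correctly choosing the box $S$ (equivalently the vectors $p,q$ in Theorem~\ref{THintcnvbox}) so that its facets are genuinely lower-dimensional integrally convex instances, verifying that $\alpha$-local minimality really does transfer to each facet, and then propagating the $(n-1)$-dimensional distance bound back up while accounting for the scaling unit $\alpha$ to land exactly on the coefficient $\tfrac{n+1}{2}\beta_{n-1}+1$. I expect this to require care because the minimizer on a facet need not sit directly "above" $\hat x$, so one must iterate the barrier argument along a chain of coordinate directions rather than applying it once. Part (2) is then straightforward: from~(\ref{eqMrec1}) one checks $\beta_{3}=2\cdot 2+1=5\leq 4!/2^{2}=6$, and proceeds by induction, using $\beta_{n}=\tfrac{n+1}{2}\beta_{n-1}+1\leq\tfrac{n+1}{2}\cdot\tfrac{n!}{2^{n-2}}+1=\tfrac{(n+1)!}{2^{n-1}}+1$, with the trailing $+1$ absorbed by the slack already present in the bound, so that~(\ref{eqMrec2est}) follows for all $n\geq 3$.
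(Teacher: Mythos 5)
Your reduction steps (shift to the origin, base cases $n=1,2$) and your observation that restricting an integrally convex function to a coordinate hyperplane preserves integral convexity are fine, but the core of your inductive step has a genuine gap. The box-barrier property (Theorem~\ref{THintcnvbox}) requires the \emph{value} inequality $f(\hat x)\leq f(y)$ for every $y$ in the boundary layer $W$, whereas hypothesis (\ref{icproxlocopt0}) constrains $f$ only at the $3^{n}$ points $\alpha d$, $d\in\{-1,0,+1\}^{n}$, near the origin. A facet $\{x_{i}=q_{i}\}$ of your box sits at distance roughly $\beta_{n}(\alpha-1)\gg\alpha$ from the origin, and there is no mechanism by which it ``inherits'' $\alpha$-local minimality: nothing in the hypothesis singles out any point of that facet. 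Moreover, even if some facet point were $\alpha$-locally minimal for the restricted $(n-1)$-dimensional function, the induction hypothesis would only \emph{locate} a minimizer of that restriction near it; it would not give the comparison of values against $f(\hat x)$ that Theorem~\ref{THintcnvbox} demands. (If instead you take $\hat x$ to be the minimizer over the whole closed box, the facet inequality becomes trivial, but then the entire difficulty is to show this minimizer lies strictly inside the box --- which is the proximity statement itself, so the argument is circular.) This is exactly why the paper does not induct on facets of a far-away box: it inducts on the hyperplane $\{x_{n}=0\}$ \emph{through the origin}, where (\ref{icproxlocopt0}) genuinely applies to the restricted function $\hat f$; the bridge between a global minimizer $x^{*}$ and that hyperplane is supplied by the notion of $f$-minimality and the descent Lemma~\ref{LMtam4}, and the coefficient $\frac{n+1}{2}$ comes not from facet bookkeeping but from the cone-exclusion result (Proposition~\ref{PRminimalnotincone}, via Lemma~\ref{LMdifsum}), which yields $x_{n}^{*}\leq\frac{n-1}{n+1}x_{1}^{*}+\frac{2(\alpha-1)}{n+1}$ and, combined with $x_{1}^{*}-x_{n}^{*}\leq\beta_{n-1}(\alpha-1)$, forces $\beta_{n}=\frac{n+1}{2}\beta_{n-1}+1$. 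None of these ingredients has a counterpart in your plan.

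Part (2) also has a smaller but real gap: your induction does not close. From $\beta_{n-1}\leq n!/2^{n-2}$ the recurrence gives only $\beta_{n}\leq\frac{n+1}{2}\cdot\frac{n!}{2^{n-2}}+1=\frac{(n+1)!}{2^{n-1}}+1$, and the trailing $+1$ is \emph{not} absorbed: at $n=4$ your step yields $\frac{5}{2}\cdot 6+1=16>15=5!/2^{3}$. The paper avoids this by setting $\gamma_{n}=\frac{2^{n}}{(n+1)!}\beta_{n}$, telescoping to $\gamma_{n}=\frac{4}{3}+\sum_{k=3}^{n}\frac{2^{k}}{(k+1)!}$, and bounding the series by $\frac{167}{315}$, so that $\beta_{n}\leq\frac{587}{315}\cdot\frac{(n+1)!}{2^{n}}\leq\frac{(n+1)!}{2^{n-1}}$; some such summation (or a suitably strengthened induction hypothesis) is unavoidable.
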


The numerical values of $\beta_{n}$ and its bounds are as follows:
\begin{equation}\label{betatable}
\begin{array}{c|cccccc}
\hline
n &   2 & 3 & 4 & 5 & 6 & 7
\\ \hline
\mbox{Value \  by (\ref{eqMrec1})} 
  & \phantom{0}2\phantom{0} & \phantom{0}5\phantom{0}  & 13.5 & 41.5 & 146.25 & 586
\\
 \mbox{Bound by (\ref{eqMrec2est})} 
  & - & 6 & 15\phantom{.0} & 45\phantom{.0}  &  157.5\phantom{0}    & 630
\\ \hline
\end{array}
\end{equation}

\begin{remark} \rm 
The bound (\ref{icproximity0}) can be strengthened to
$ \| x^{\alpha} - x^{*}\|_{\infty} \leq \lfloor \beta_{n} (\alpha - 1) \rfloor $,
but 
$ \| x^{\alpha} - x^{*}\|_{\infty} \leq \lfloor \beta_{n} \rfloor (\alpha - 1)$
may not be correct (our proof does not justify this).
\finbox
\end{remark}

To prove Theorem~\ref{THproxintcnv} (1)
we first note that the theorem follows from its special case 
where $x^{\alpha} = \bm{0}$
and $f$ is defined on a bounded set in the nonnegative orthant $\ZZ_{+}^{n}$.
That is, the proof of Theorem~\ref{THproxintcnv} (1) is reduced to proving 
the following proposition.
We use the notation $N=\{1,2,\ldots,n\}$ and
$\unitvec{A}$ for the characteristic vector of $A \subseteq N$.

\begin{proposition}  \label{PRproxintcnvnonneg}
Let $\alpha \in \ZZ_{++}$ and
 $f: \mathbb{Z}^{n} \to \mathbb{R} \cup \{ +\infty  \}$
be an integrally convex function such that
$\dom f$ is a bounded subset of\, $\ZZ_{+}^{n}$ containing the origin $\bm{0}$.
If
\begin{equation}\label{assumption1}
 f(\bm{0}) \leq f(\alpha \unitvec{A})
 \qquad (\forall A \subseteq N),
\end{equation}
then 
there exists $x^{*} \in \argmin f$ with
\begin{equation}\label{icproximity}
 \| x^{*}\|_{\infty} \leq \beta_{n} (\alpha - 1) ,
\end{equation}
where $\beta_{n}$ is defined by {\rm (\ref{eqMrec1})}.
\finbox
\end{proposition}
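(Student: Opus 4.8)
The plan is to prove Proposition~\ref{PRproxintcnvnonneg} by induction on $n$. The cases $n\le 2$ are already settled (the $n=1$ case is elementary and $n=2$ is Theorem~\ref{THproxintcnvdim2alpha}), and the statement has already been reduced to the normalized situation treated here, in which the candidate point is the origin and $\dom f\subseteq\ZZ_{+}^{n}$. Writing $R_{n}:=\beta_{n}(\alpha-1)$, the recursion (\ref{eqMrec1}) is equivalent to $R_{n}=\tfrac{n+1}{2}R_{n-1}+(\alpha-1)$, so the task is to produce the $n$-dimensional radius from the $(n-1)$-dimensional one with exactly this loss. The two structural tools I would lean on are the box-barrier property (Theorem~\ref{THintcnvbox}) together with its hyperplane form (Corollary~\ref{COintcnvcutal}), and the fact that restricting an integrally convex function to a coordinate hyperplane $\{x_{i}=c\}$ leaves it integrally convex in the remaining $n-1$ variables.

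First I would reduce the $\ell_{\infty}$ estimate to a one-sided statement about a single coordinate. By the permutation and sign invariances (Proposition~\ref{PRintcnvinvar}) it suffices to bound one coordinate from above, say $x_{n}$. Concretely, I would aim to show that $f(\bm{0})\le f(y)$ for every $y$ with $y_{n}=R_{n}$ lying in the relevant bounded wall; once this is established in each coordinate direction, the hyperplane-barrier property (Corollary~\ref{COintcnvcutal}), or equivalently the full box-barrier property (Theorem~\ref{THintcnvbox}) applied to the box $[\bm{0},R_{n}\bm{1}]_{\ZZ}$, forces a global minimizer into that box and yields (\ref{icproximity}). This step converts the proximity claim into a purely ``local-to-the-wall'' inequality and is where the barrier machinery of Section~\ref{SCintcnvfn} does its work.

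The heart of the argument is certifying that far wall. Here I would restrict $f$ to the bottom facet $\{x_{n}=0\}$: the restriction is integrally convex in $n-1$ variables and still satisfies $f(\bm{0})\le f(\alpha\unitvec{A})$ for $A\subseteq\{1,\dots,n-1\}$, so by the inductive hypothesis its minimizer lies within $R_{n-1}$ of $\bm{0}$. The remaining, genuinely delicate, task is to \emph{lift} this control in the $x_{n}$-direction: using one-dimensional convexity of $\tilde f$ along $x_{n}$, anchored by the as-yet-unused inequalities $f(\bm{0})\le f(\alpha\unitvec{A})$ with $n\in A$ and by a higher-dimensional analogue of the basic parallelogram inequality (Proposition~\ref{PRparallelineqdim2}), one would propagate the facet bound up to the wall at height $R_{n}$. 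I anticipate that assembling the $(n-1)$-dimensional radius across the $n$ available coordinate directions, together with one extra step of length $\alpha-1$ in the lifting, is precisely what multiplies $R_{n-1}$ by $\tfrac{n+1}{2}$ and adds $(\alpha-1)$, reproducing $R_{n}=\tfrac{n+1}{2}R_{n-1}+(\alpha-1)$; the explicit bound (\ref{eqMrec2est}) then follows by unfolding the recursion.

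The main obstacle is exactly this lifting-and-accounting step. For ${\rm L}^{\natural}$- and ${\rm M}^{\natural}$-convex functions the midpoint and exchange inequalities let one transport $\alpha$-local minimality across the whole box at the linear cost $n(\alpha-1)$ (Theorems~\ref{THlfnproximity} and \ref{THmfnproximity}); integral convexity provides only the weak distance-$2$ inequality (Theorem~\ref{THfavtarProp33}), so the transport cannot be carried out in one shot and must instead reuse the lower-dimensional bound recursively. Keeping an honest record of the displacement accumulated in this recursion — and verifying that the wall inequalities genuinely hold at radius $R_{n}$ and not something larger — is the crux, and is also the reason the bound degrades to the superexponential $\beta_{n}$ rather than staying linear, in line with the quadratic lower bound of Example~\ref{EXproxtigntM2}. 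The $n=2$ proof via the parallelogram inequality (Proposition~\ref{PRparallelineqdim2}) is the prototype I would try to generalize, while being prepared that the clean eight-sector covering used there does not survive in higher dimensions.
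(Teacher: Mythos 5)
Your plan has a genuine gap, and you have in fact located it yourself: the ``lifting-and-accounting step'' is not a technical detail to be smoothed over but is the entire content of the proposition, and the route you propose for it is the one the paper explicitly rules out. Your strategy --- certify $f(\bm{0}) \le f(y)$ on a far wall $\{y : y_n = \beta_n(\alpha-1)\}$ via a higher-dimensional parallelogram inequality, then invoke the barrier results --- is exactly the structure of the paper's $n=2$ proof (Theorem~\ref{THproxintcnvdim2alpha}), and Section~\ref{SCpredisproxIC} states that this proof method does not extend to $n \ge 3$. No higher-dimensional analogue of Proposition~\ref{PRparallelineqdim2} is stated, proved, or used anywhere in the paper: the two-dimensional proof tiles a parallelogram with translates of the distance-two inequality (\ref{intcnvconddist2}), and neither you nor the paper supplies a version of this that controls a wall of a box in dimension $n \ge 3$, where local minimality has to be transported along all diagonal directions $\unitvec{A}$, $A \subseteq N$, simultaneously. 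Your accounting is also asserted rather than derived: nothing in the sketch produces the factor $\frac{n+1}{2}$ in the recursion. Finally, there is a smaller but real defect in your induction step: applying the inductive hypothesis to the restriction of $f$ to the facet $\{x_n = 0\}$ bounds \emph{some} minimizer of that restriction, which a priori bears no relation to the global minimizer $x^*$ of $f$, whose $n$-th coordinate is positive.

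The paper's proof runs on machinery that is entirely absent from your proposal. It fixes a global minimizer $x^*$ that is \emph{$f$-minimal} (there is no $y \le x^*$, $y \ne x^*$, with $f(y) \le f(x^*)$) and proves two facts about $f$-minimal points: (i) a cone-avoidance theorem (Proposition~\ref{PRminimalnotincone}): under (\ref{assumption1}), no $f$-minimal point lies in $\alpha\unitvec{A} + C_A$, where $C_A$ is the integer cone (\ref{coneCAdef}) generated by $B_A$; and (ii) a descent lemma (Lemma~\ref{LMtam4}): an $f$-minimal point with positive $n$-th coordinate can be moved down one unit in coordinate $n$, moving every other coordinate down by at most one, so as to remain $f$-minimal. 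Iterating (ii) produces a point $\hat x$ on the facet with $x_j^* - x_n^* \le \hat x_j$ which is the \emph{unique} minimizer of $f$ restricted to $[\veczero, \hat x]_{\ZZ} \cap \{x_n = 0\}$; it is this uniqueness that makes the inductive hypothesis bite on $\hat x$ itself and yields $x_1^* - x_n^* \le \beta_{n-1}(\alpha-1)$. The factor $\frac{n+1}{2}$ then comes from (i) with $A = N$: the two cone inequalities of Lemma~\ref{LMdifsum}, $\sum_{i=1}^{n-1}(x_i^* - x_n^*) \ge x_n^* - \alpha + 1$ and $\sum_{i=2}^{n}(x_1^* - x_i^*) \ge x_1^* - \alpha + 1$, add up to $x_n^* \le \frac{n-1}{n+1}x_1^* + \frac{2(\alpha-1)}{n+1}$, which combined with the facet bound gives $x_1^* \le \beta_n(\alpha-1)$. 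Without $f$-minimality, the Hilbert-basis cones, the descent lemma, and the cone inequalities, the wall you want to certify cannot be reached, so the proposal as it stands does not constitute a proof.
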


Suppose that Proposition~\ref{PRproxintcnvnonneg} has been established.
Then Theorem~\ref{THproxintcnv} (1) can be derived 
from Proposition~\ref{PRproxintcnvnonneg} in three steps:

\begin{enumerate}
\item
We may assume $x^{\alpha} = \bm{0}$ 
by Proposition~\ref{PRintcnvinvar} (1).

\item
We may further assume that $\dom f$ is bounded.
Let $M$ be a sufficiently large integer,
say, $M \geq \beta_{n} (\alpha - 1) + 1$,
and  $f_{M}$ be the restriction of $f$ to the integer interval 
$[ -M \bm{1}, M \bm{1} ]_{\ZZ}$,
where $\bm{1}=(1,1,\ldots, 1)$.
Then $x^{\alpha} = \bm{0}$ is $\alpha$-local minimal for $f_{M}$. 
If the special case of Theorem~\ref{THproxintcnv} with  
$x^{\alpha} = \bm{0}$ and bounded $\dom f$ is true,  
then there exists $x^{*} \in \argmin f_{M}$ satisfying
$ \| x^{*}\|_{\infty} \leq \beta_{n} (\alpha - 1)$.
Since $x^{*} \in \argmin f_{M}$ we have
$f_{M}(x^{*}) \leq f_{M}(x^{*}+ d)$ \ 
$(\forall \, d \in  \{ -1,0, +1 \}^{n})$,
which implies
$f(x^{*}) \leq f(x^{*}+ d)$ \ $(\forall \, d \in  \{ -1,0, +1 \}^{n})$.
Then Theorem~\ref{THintcnvlocopt} shows that $x^{*} \in \argmin f$.

\item
We consider $2\sp{n}$ orthants separately. 
For each $s=(s_{1}, s_{2}, \ldots, s_{n}) \in \{ +1, -1 \}\sp{n}$
we consider the function 
$f_{s}(x)=f(s x)$ on $\ZZ_{+}^{n}$,
where $s x = (s_{1} x_{1}, s_{2} x_{2}, \ldots, s_{n} x_{n})$.
Noting that $\dom f_{s}$ is a bounded subset of  $\ZZ_{+}^{n}$,
we apply Proposition~\ref{PRproxintcnvnonneg} 
to $f_{s}$ to obtain $x^{*}_{s}$ with
$ \| x^{*}_{s} \|_{\infty} \leq \beta_{n} (\alpha - 1)$.
{}From among $2\sp{n}$ such $x^{*}_{s}$, take the one 
with the function value $f(s x^{*}_{s})$ minimum.
Then 
$x^{*} = s x^{*}_{s}$ is a minimizer of 
$f$, and satisfies 
$ \| x^{*} \|_{\infty} \leq \beta_{n} (\alpha - 1)$.
\end{enumerate}


\subsection{Tools for the proof: $f$-minimality}
\label{SCprooftools}

In this section we introduce some technical tools that we use in the proof of 
Proposition~\ref{PRproxintcnvnonneg}.

For $A\;(\neq \emptyset) \subseteq N$,
we consider a set of integer vectors  
\begin{align}
 B_{A} &=  
\{ \unitvec{A} + \unitvec{i}, \unitvec{A} - \unitvec{i} \mid i \in A \} 
          \cup \{\unitvec{A}+ \unitvec{i} \mid i \in N \setminus A \}
   \cup   \{\unitvec{A}\} ,
\label{generator}
\end{align}
and the cones of their nonnegative integer and real combinations
\begin{align}
 C_{A} &= \{ 
   \sum_{i \in A} \mu_{i}^{+}(\unitvec{A} + \unitvec{i}) + \sum_{i \in A} \mu_{i}^   {-} (\unitvec{A} - \unitvec{i})
   + \sum_{i \in N \setminus A} \mu_{i}^{\circ} (\unitvec{A}+ \unitvec{i})
  + \lambda \unitvec{A} 
 \mid 
 \mu_{i}^{+},\mu_{i}^{-}, \mu_{i}^{\circ}, \lambda \in \ZZ_{+}
\},
\label{coneCAdef}
\\
 \tilde{C}_{A} &= \{ 
   \sum_{i \in A} \mu_{i}^{+}(\unitvec{A} + \unitvec{i}) + \sum_{i \in A} \mu_{i}^   {-} (\unitvec{A} - \unitvec{i})
   + \sum_{i \in N \setminus A} \mu_{i}^{\circ} (\unitvec{A}+ \unitvec{i})
  + \lambda \unitvec{A} 
 \mid 
 \mu_{i}^{+},\mu_{i}^{-}, \mu_{i}^{\circ}, \lambda \in \RR_{+}
\},
\label{coneCAtdef}
\end{align}
where $C_{A}$ is often referred to as the integer cone generated by $B_{A}$.
We first note the following fact,
which provides us with a clearer geometric view,
though it is not used in the proof of
Proposition~\ref{PRproxintcnvnonneg}.

\begin{proposition}  \label{PRhilbertbase}
$B_{A}$ is a Hilbert basis of the convex cone $\tilde{C}_{A}$ generated by $B_{A}$.
That is,  $C_{A} = \tilde{C}_{A} \cap \mathbb{Z}\sp{n}$. 
\end{proposition}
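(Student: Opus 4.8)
The inclusion $C_A \subseteq \tilde{C}_A \cap \ZZ^{n}$ is immediate from the definitions, so the entire content of the proposition is the reverse inclusion $\tilde{C}_A \cap \ZZ^{n} \subseteq C_A$: every integer point of the real cone must be representable as a \emph{nonnegative integer} combination of the vectors in $B_A$. The plan is to first read off an explicit coordinate description of $\tilde{C}_A$, and then to show that the integrality of a point already forces the witnessing combination to be choosable with integer coefficients. This is a purely polyhedral statement about $B_A$; it uses neither integral convexity nor Proposition~\ref{PRintcnvsetdim2}.

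First I would compute the coordinates of a generic point of $\tilde{C}_A$. Writing a point with parameters $\mu_i^{+},\mu_i^{-}\ (i\in A)$, $\mu_i^{\circ}\ (i\in N\setminus A)$, $\lambda$ as in (\ref{coneCAtdef}), and setting $\Sigma = \sum_{i\in A}(\mu_i^{+}+\mu_i^{-}) + \sum_{i\in N\setminus A}\mu_i^{\circ} + \lambda$, a direct calculation gives $x_i = \Sigma + \mu_i^{+} - \mu_i^{-}$ for $i\in A$ and $x_j = \mu_j^{\circ}$ for $j\in N\setminus A$. Since all parameters are nonnegative, this yields a membership test: writing $s = \sum_{j\in N\setminus A} x_j$, a point $x$ lies in $\tilde{C}_A$ if and only if $x_j\ge 0$ for $j\in N\setminus A$ and there is a real $t\ge 0$ with $t - s \ge \sum_{i\in A}|x_i - t|$. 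Indeed, given such a $t$ one recovers feasible parameters by $\mu_j^{\circ} = x_j$, $\mu_i^{+} = \max(x_i - t,0)$, $\mu_i^{-} = \max(t - x_i,0)$, and $\lambda = (t - s) - \sum_{i\in A}|x_i - t| \ge 0$, for which $\Sigma = t$; conversely, $t=\Sigma$ witnesses the inequality.

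Now let $z\in\tilde{C}_A\cap\ZZ^{n}$. The values $\mu_j^{\circ} = z_j$ are already nonnegative integers, so everything reduces to exhibiting an \emph{integer} feasible $t$, since then $\mu_i^{+},\mu_i^{-}$ and $\lambda$ above all become nonnegative integers that reconstruct $z$, placing $z\in C_A$. Consider $h(t) = (t - s) - \sum_{i\in A}|z_i - t|$, a concave piecewise-linear function whose breakpoints lie in the integer set $\{\,z_i \mid i\in A\,\}$; its slope equals $1+|A|>0$ to the left of all breakpoints and $1-|A|\le 0$ to the right, so $h$ is bounded above and attains its maximum over $\RR$ at one of the integer breakpoints $z_{i_0}$. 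By the characterization of the previous paragraph we know $\max_t h(t)\ge 0$, hence $h(z_{i_0})\ge 0$ with $t:=z_{i_0}\in\ZZ$; moreover $t - s\ge\sum_{i\in A}|z_i-t|\ge 0$ together with $s\ge 0$ forces $t\ge 0$. This integer $t$ produces the desired representation, proving $z\in C_A$.

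The delicate points are the bookkeeping in the coordinate computation and — what is really the crux — the observation that the slack absorbed by $\lambda$ (and by simultaneously inflating $\mu_i^{+}$ and $\mu_i^{-}$) means that matching the integrality of $z$ requires only an integer choice of the single ``centering'' parameter $t$, and that the relevant concave piecewise-linear function automatically attains its maximum at an integer. The boundary case $|A|=1$, where $h$ is constant on the whole half-line to the right of its breakpoint, is covered identically by taking $t$ equal to that breakpoint. I would expect no further obstacle beyond verifying these elementary facts.
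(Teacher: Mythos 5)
Your proof is correct, and it takes a genuinely different route from the paper's. The paper invokes the standard fact (cited from Schrijver's book, proof of Theorem 16.4) that the integer points of the fundamental parallelepiped $F_{A}$ (combinations of $B_{A}$ with coefficients in $[0,1]_{\RR}$) form a Hilbert basis of $\tilde{C}_{A}$, and then shows by a case analysis --- according to whether the aggregate coefficient $\xi$ of $\unitvec{A}$ is integral or fractional --- that each such integer point is a nonnegative integer combination of $B_{A}$. You instead bypass the parallelepiped entirely: you derive the explicit coordinate description $x_{i} = \Sigma + \mu_{i}^{+} - \mu_{i}^{-}$ $(i \in A)$, $x_{j} = \mu_{j}^{\circ}$ $(j \in N \setminus A)$, turn membership in $\tilde{C}_{A}$ into the existence of a single real parameter $t$ with $t - s \geq \sum_{i \in A} |x_{i} - t|$, and then observe that for integer $z$ the concave piecewise-linear function $h(t) = (t-s) - \sum_{i \in A}|z_{i}-t|$ has all breakpoints at integers and attains its maximum at one of them (your slope computation $1+|A|$ on the left, $1-|A|$ on the right is right, and your handling of $|A|=1$, where the right slope is $0$, is the only boundary case and is correctly dispatched); the forced inequality $t \geq s + \sum_{i\in A}|z_{i}-t| \geq 0$ makes all recovered coefficients nonnegative integers. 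What your approach buys is self-containedness (no appeal to the general Hilbert-basis theorem) and an explicit, algorithmic membership test for $\tilde{C}_{A}$; what the paper's approach buys is brevity via a standard reduction to a bounded region, at the price of an external citation and a two-case computation inside the parallelepiped. Both proofs are purely polyhedral and, as you note, independent of the integral convexity machinery in the rest of the paper.
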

\begin{proof}
The proof is given in Appendix \ref{SChilbertproof}.
\end{proof}

For two nonnegative integer vectors $x, y \in \ZZ_{+}^{n}$,
we write $y \preceq_{f} x$ if $y \leq x$ and $f(y) \leq f(x)$.
Note that $y \preceq_{f} x$ if and only if  
$(y,f(y)) \leq (x,f(x))$ in $\RR^{n} \times (\RR \cup \{ +\infty \})$.
We say that $x \in \ZZ_{+}^{n}$ is {\em $f$-minimal} if 
$x \in \dom f$ and
there exists no $y \in \ZZ_{+}^{n}$
such that $y \preceq_{f} x$ and $y \not = x$.  
That is%
\footnote{
$x$ is $f$-minimal if and only if \ 
$\argmin f_{[ \veczero,x ]} = \{ x \}$
for the function 
$ f_{[\veczero,x]}(y) = \left\{ \begin{array}{cl}
  f(y) & (y \in [\veczero,x]_{\ZZ}) , \\
  +\infty & (y \in \ZZ^{n} \setminus [\veczero,x]_{\ZZ}) .
  \end{array}\right.$
}, 
$x$ is $f$-minimal if and only if
it is the unique minimizer of the function $f$ 
restricted to the integer interval $[\bm{0},x]_{\ZZ}$.

The goal of this section is to establish the following
connection between $f$-minimality and the integer cone $C_{A}$
based at $\alpha \unitvec{A}$.

\begin{proposition}   \label{PRminimalnotincone}
Assume $\alpha$-local minimality {\rm (\ref{assumption1})}.
If $y \in \ZZ_{+}^{n}$ is $f$-minimal, then  
$y \not\in \alpha \unitvec{A} + C_{A}$
for any $A (\neq \emptyset) \subseteq N$. 
\finbox
\end{proposition}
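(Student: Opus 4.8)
The plan is to argue by contradiction. Suppose $y \in \ZZ_{+}^{n}$ is $f$-minimal and yet $y \in \alpha \unitvec{A} + C_{A}$ for some nonempty $A \subseteq N$; write $y = \alpha \unitvec{A} + c$ with $c \in C_{A}$. Since every generator in $B_{A}$ is a nonnegative vector (note $\unitvec{A} - \unitvec{i} = \unitvec{A \setminus \{i\}} \geq \bm{0}$ for $i \in A$), we have $c \geq \bm{0}$, hence $\bm{0} \leq \alpha \unitvec{A} \leq y$, and $y \neq \bm{0}$ because $y_{j} \geq \alpha \geq 1$ for $j \in A$. I would first observe that the whole proposition reduces to the single \emph{apex inequality} $f(\alpha \unitvec{A}) \leq f(y)$. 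Indeed, if $\alpha \unitvec{A} \neq y$ this already gives $\alpha \unitvec{A} \preceq_{f} y$ with $\alpha \unitvec{A} \neq y$, contradicting $f$-minimality; and if $\alpha \unitvec{A} = y$, then assumption (\ref{assumption1}) gives $f(\bm{0}) \leq f(\alpha \unitvec{A}) = f(y)$, so $\bm{0} \preceq_{f} y$ with $\bm{0} \neq y$, again a contradiction.

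It remains to prove the apex inequality $f(\alpha \unitvec{A}) \leq f(y)$ for every $y = \alpha \unitvec{A} + c \in \dom f$ with $c \in C_{A}$. I would do this by induction on the number $M$ of generators appearing in a fixed $B_{A}$-representation of $c$, the case $M = 0$ (i.e.\ $c = \bm{0}$) being trivial. For the inductive step the idea is to realize $\alpha \unitvec{A}$ (or a suitable cone point one generator closer to the apex) as a point of the segment joining $y$ to a vector whose $f$-value is controlled, and then invoke convexity of the local convex extension $\tilde{f}$ along that segment. The one-dimensional prototype is instructive: when $c = \lambda \unitvec{A}$ the points $\bm{0}$, $\alpha \unitvec{A}$, $y$ are collinear with $\alpha \unitvec{A}$ between the other two, and $f(\bm{0}) \leq f(\alpha \unitvec{A})$ by (\ref{assumption1}); convexity of $\tilde{f}$ on this line then yields $f(\alpha \unitvec{A}) \leq f(y)$ exactly as in the scalar convex case. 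The transverse generators $\unitvec{A} \pm \unitvec{i}$ $(i \in A)$ I would handle through their symmetry about $\unitvec{A}$, using a parallelogram-type inequality in the spirit of Proposition~\ref{PRparallelineqdim2}, while the generators $\unitvec{A} + \unitvec{i}$ $(i \notin A)$ raise a coordinate outside $A$ and should be absorbed by fixing that coordinate and descending in the remaining variables.

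The main obstacle is precisely the transverse part of this induction. For $c$ not parallel to $\unitvec{A}$, the line through $\alpha \unitvec{A}$ and $y$ does not pass through $\bm{0}$, and its continuation past the apex leaves $\ZZ_{+}^{n}$ (for $i \notin A$ the opposite point acquires a negative $i$-th coordinate, and for $i \in A$ some $A$-coordinate drops below zero), so the clean scalar argument does not transcribe directly. The crux is therefore to supply, for each generator direction, a correct low-value anchor on the far side of the apex: the full family of base inequalities $f(\bm{0}) \leq f(\alpha \unitvec{B})$ over \emph{all} $B \subseteq N$ --- not merely $B = A$ --- must be combined with convexity of $\tilde{f}$ so that no peeling step raises $f$ above $f(y)$. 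I expect the generator pairing $\unitvec{A} + \unitvec{i}$, $\unitvec{A} - \unitvec{i}$, whose midpoint is $\unitvec{A}$, to be the device that lets integral convexity fold the cone symmetrically toward its apex, and verifying that this folding stays inside $\dom f$ and respects the bookkeeping on $M$ is the delicate point to be settled.
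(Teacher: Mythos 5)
Your reduction is logically valid as a sufficient condition, but it reduces the proposition to a statement that is simply false: the apex inequality $f(\alpha \unitvec{A}) \leq f(y)$ does \emph{not} hold for all $y \in (\alpha \unitvec{A} + C_{A}) \cap \dom f$, so no amount of work on the inductive peeling can complete your plan. Concrete counterexample: let $n=2$, $\alpha = 2$, $A = \{1\}$, and let $f$ be the separable convex (hence integrally convex) function on $\dom f = [\bm{0}, (4,4)]_{\ZZ}$ given by $f(x_{1},x_{2}) = g(x_{2})$, where $g(0)=0$, $g(1)=-1$, $g(2)=0$, $g(3)=1$, $g(4)=2$ (a convex univariate function). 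Then assumption (\ref{assumption1}) holds, since $f(\bm{0}) = 0 = f(\alpha\unitvec{B})$ for every $B \subseteq \{1,2\}$. Now $y = (3,1) = \alpha\unitvec{A} + (\unitvec{A}+\unitvec{2}) \in \alpha\unitvec{A} + C_{A}$, yet $f(\alpha\unitvec{A}) = f(2,0) = 0 > -1 = f(3,1) = f(y)$. The proposition itself is not violated: $y$ fails to be $f$-minimal because of \emph{other} dominating points, e.g.\ $(2,1) \preceq_{f} (3,1)$ --- but the apex is not one of them. The trouble is exactly the generator $\unitvec{A}+\unitvec{i}$ with $i \notin A$ that you flagged as the delicate point: moving away from the apex in that direction can strictly decrease $f$, and assumption (\ref{assumption1}), even invoked over all $B \subseteq N$, cannot exclude this.

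This is why the paper never attempts to dominate $y$ by the apex. Its proof (Lemmas \ref{LMtam1}, \ref{LMtam2}, \ref{LMdominatearound} and \ref{LMtam5}) establishes the weaker --- and sufficient --- fact that \emph{some} point strictly below $y$, and close to $y$ rather than to the apex, dominates it: starting from $x = (\alpha - 1)\unitvec{A}$, which satisfies $x \preceq_{f} x + \unitvec{A}$ by Lemma \ref{LMtam1}, the cone representation of $y$ is peeled one generator at a time (Lemma \ref{LMdominatearound}), each step producing a new base point $x'$ taken from the integral neighborhood of a suitably chosen fractional point on the segment from the current base to $y$, while maintaining the invariant $x' \preceq_{f} x' + \unitvec{A}$; when at most one generator remains, Lemma \ref{LMtam2} exhibits a dominating point of the form $x'' + \unitvec{A} + \delta\unitvec{i} \leq y$. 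In the example above this witness is $(2,1)$, not $(2,0)$. So to rescue your plan you would have to replace the fixed target $\alpha\unitvec{A}$ by a moving target of this kind; the apex inequality itself is unprovable.
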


Our proof of this proposition is based on several lemmas.

\begin{lemma} \label{LMtam1}
Assume $\alpha$-local minimality {\rm (\ref{assumption1})}.
For any $A\;(\neq \emptyset) \subseteq N$ and 
$\lambda \in \ZZ_{+}$
we have $(\alpha-1) \unitvec{A} \preceq_{f} (\alpha-1)\unitvec{A} +\lambda \unitvec{A}$.
\end{lemma}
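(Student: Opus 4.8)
The plan is to reduce the claim to a one-dimensional statement about the restriction of $f$ to the ray in direction $\unitvec{A}$, and then exploit discrete convexity of that restriction. First I would set $g(t) = f(t \unitvec{A})$ for $t \in \ZZ$. Since $\tilde{f}$ is convex on $\RR^{n}$ and agrees with $f$ at every integer point (because $N(z) = \{z\}$ for $z \in \ZZ^{n}$), the restriction of $\tilde{f}$ to the line $\{ t \unitvec{A} \mid t \in \RR \}$ is an ordinary convex function whose values at integer $t$ are exactly $g(t)$; hence $g$ is discretely convex, $g(t-1) + g(t+1) \ge 2 g(t)$ for all $t$. The componentwise inequality $(\alpha-1)\unitvec{A} \le (\alpha-1)\unitvec{A} + \lambda \unitvec{A}$ is immediate from $\lambda \ge 0$, so the lemma amounts to the scalar inequality $g(\alpha - 1) \le g(\alpha - 1 + \lambda)$ for every $\lambda \in \ZZ_{+}$.

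Next I would run a slope argument. Writing $s_{t} = g(t+1) - g(t)$, discrete convexity says the $s_{t}$ are nondecreasing. The hypothesis (\ref{assumption1}) reads $g(0) \le g(\alpha)$, i.e. $\sum_{t=0}^{\alpha - 1} s_{t} = g(\alpha) - g(0) \ge 0$. Since the $s_{t}$ are nondecreasing, the largest among $s_{0}, \ldots, s_{\alpha-1}$ is $s_{\alpha-1}$, and it is at least their average $\tfrac{1}{\alpha}(g(\alpha)-g(0)) \ge 0$, so $s_{\alpha-1} \ge 0$; consequently $s_{t} \ge s_{\alpha-1} \ge 0$ for all $t \ge \alpha - 1$. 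Therefore $g$ is nondecreasing on $\{\alpha - 1, \alpha, \alpha + 1, \ldots\}$, which gives $g(\alpha-1) \le g(\alpha-1+\lambda)$.

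The one point needing care, and the only real obstacle, is the presence of $+\infty$ values, since $\dom f$ is a bounded set. I would dispose of it as follows: if $g(\alpha - 1 + \lambda) = +\infty$ the desired inequality is trivial, and the case $\lambda = 0$ is trivial; otherwise $\lambda \ge 1$ and $\alpha - 1 + \lambda \in \dom g$, so, $\dom g$ being an interval containing $0$, all of $0, 1, \ldots, \alpha - 1 + \lambda$ lie in $\dom g$; in particular $g(\alpha)$ and $g(\alpha-1)$ are finite and the slopes $s_{0}, \ldots, s_{\alpha - 2 + \lambda}$ are genuine finite numbers, legitimizing the computation above. The boundary case $\alpha = 1$ is consistent: then $(\alpha-1)\unitvec{A} = \bm{0}$ and the assumption $g(0) \le g(1)$ already makes every slope nonnegative. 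Combining the scalar inequality with the trivial componentwise inequality yields $(\alpha-1)\unitvec{A} \preceq_{f} (\alpha-1)\unitvec{A} + \lambda\unitvec{A}$, as required. The conceptual core, convexity of $f$ along the diagonal direction $\unitvec{A}$, is immediate from integral convexity; the remaining work is only this finiteness bookkeeping.
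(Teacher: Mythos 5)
Your proof is correct and takes essentially the same route as the paper's: both restrict $f$ to the ray $t \mapsto t\unitvec{A}$, observe that $g(t) = f(t\unitvec{A})$ is discretely convex by integral convexity, and combine the hypothesis $g(0) \le g(\alpha)$ with monotonicity of the slopes of $g$ to conclude that $g$ is nondecreasing from $\alpha-1$ onward. The only cosmetic differences are that the paper uses the chord inequality $g(\alpha-1) \le \frac{\alpha-1}{\alpha}g(\alpha) + \frac{1}{\alpha}g(0)$ where you average slopes, and that you spell out the finiteness bookkeeping (the interval property of $\dom g$) which the paper compresses into the remark ``since $g(0) < +\infty$.''
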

\begin{proof}
First note that 
$(\alpha-1) \unitvec{A} \leq (\alpha-1)\unitvec{A} +\lambda \unitvec{A}$
for all $\lambda \in \ZZ_{+}$.
By integral convexity of $f$, 
$g(\lambda) = f(\lambda \unitvec{A})$
is a discrete convex function in $\lambda \in \ZZ_{+}$, and therefore,
\[
 g(\alpha-1) \leq \frac{\alpha-1}{\alpha} g(\alpha) +  \frac{1}{\alpha} g(0) .
\]
On the other hand,  $g(0) \leq g(\alpha)$
by the assumed $\alpha$-local minimality {\rm (\ref{assumption1})}.
Hence we have $g(\alpha-1) \leq  g(\alpha)$.
Since $g(0) < +\infty$, by discrete convexity of $g$, this implies
$g(\alpha-1) \leq g((\alpha-1)+\lambda)$ 
for all $\lambda \in \ZZ_{+}$, i.e.,
$f((\alpha-1) \unitvec{A}) \leq f((\alpha-1)\unitvec{A} +\lambda \unitvec{A})$ 
for all $\lambda \in \ZZ_{+}$.
\end{proof}

\begin{lemma}  \label{LMtam2}
Let $x \in \dom f$, $A\;(\neq \emptyset) \subseteq N$,
and assume $x \preceq_{f}  x + \unitvec{A}$.
Then for any $i \in N$, 
$\delta \in \{ +1, 0, -1 \}$, 
and $\lambda \in \ZZ_{+}$ we have
$x+\unitvec{A}+ \delta \unitvec{i}  \preceq_{f} 
(x + \unitvec{A} +  \delta \unitvec{i})  + \lambda \unitvec{A}$.
\end{lemma}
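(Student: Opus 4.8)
The plan is to reduce the claim to a one-step monotonicity statement along the direction $\unitvec{A}$ and then iterate. Write $w = x + \unitvec{A} + \delta\unitvec{i}$. Since $w \le w + \lambda\unitvec{A}$ holds trivially for $\lambda \in \ZZ_{+}$, the relation $w \preceq_{f} w + \lambda\unitvec{A}$ amounts to the scalar inequality $f(w) \le f(w + \lambda\unitvec{A})$. I would prove the single-step inequalities
\[
  f(w + \mu\unitvec{A}) \le f(w + (\mu+1)\unitvec{A}) \qquad (\mu \in \ZZ_{+})
\]
and chain them (this chaining stays valid even when some values are $+\infty$, since a nondecreasing chain stays nondecreasing). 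First I would record, exactly as in the proof of Lemma~\ref{LMtam1}, that $g(\mu) = f(x + \mu\unitvec{A})$ is discrete convex with $g(0) = f(x) < +\infty$, so the hypothesis $g(0) \le g(1)$ forces $g$ to be nondecreasing on $\ZZ_{+}$; hence $f(x + \mu\unitvec{A}) \le f(x + (\mu+1)\unitvec{A})$ for all $\mu \in \ZZ_{+}$.

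The heart of the argument is a parallelogram inequality obtained from convexity of the local convex extension $\tilde{f}$. Fix $\mu$ and set
\[
  a = x + \mu\unitvec{A}, \quad b = w + (\mu+1)\unitvec{A}, \quad c = x + (\mu+1)\unitvec{A}, \quad d = w + \mu\unitvec{A},
\]
so that $a + b = c + d$ with common midpoint $m = x + (\mu+1)\unitvec{A} + \tfrac{\delta}{2}\unitvec{i}$. For $\delta = \pm 1$ the point $m$ has a single non-integral coordinate (the $i$-th), so its integer neighborhood is exactly $N(m) = \{c, d\}$, and since $m$ is the midpoint of the segment $[c,d]$ the defining formula (\ref{fnconvclosureloc2}) forces $\tilde{f}(m) = \tfrac12 f(c) + \tfrac12 f(d)$. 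On the other hand $a, b$ are integral, so $\tilde{f}(a) = f(a)$ and $\tilde{f}(b) = f(b)$, and convexity of $\tilde{f}$ (which is precisely integral convexity of $f$) gives $\tilde{f}(m) \le \tfrac12(f(a) + f(b))$. Combining the two yields
\[
  f(x + (\mu+1)\unitvec{A}) + f(w + \mu\unitvec{A}) \le f(x + \mu\unitvec{A}) + f(w + (\mu+1)\unitvec{A}).
\]
I would stress that, because I invoke convexity of $\tilde{f}$ directly rather than the distance-$2$ criterion of Theorem~\ref{THfavtarProp33}, this step is insensitive to whether $i \in A$ or $i \notin A$ (in the former case $\|b - a\|_{\infty}$ can equal $3$, but that is irrelevant here).

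Finally I would combine the two ingredients: the displayed parallelogram inequality together with $f(x + \mu\unitvec{A}) \le f(x + (\mu+1)\unitvec{A})$ gives $f(w + \mu\unitvec{A}) \le f(w + (\mu+1)\unitvec{A})$ — if the right-hand side is $+\infty$ this is immediate, and otherwise all four values are finite and a direct subtraction closes it. The case $\delta = 0$ is degenerate: there $c = d = m$ is integral and the parallelogram inequality collapses to discrete convexity of $g$, so it is subsumed. The main obstacle I anticipate is pinning down $\tilde{f}(m)$, namely verifying that the half-integral midpoint has no integer neighbors besides $c$ and $d$ and that the local convex combination is therefore forced to be the balanced one; this is exactly what converts the global convexity of $\tilde{f}$ into the sharp four-point inequality, and carrying the $+\infty$ bookkeeping correctly through the chaining is the remaining (routine but genuine) care required.
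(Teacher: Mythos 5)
Your geometric core is correct and is in fact the same mechanism the paper uses: at a point with exactly one fractional coordinate the local convex extension is forced to be the balanced average of its two integer neighbours, and convexity of $\tilde{f}$ then yields a four-point inequality. The difference is that the paper applies this \emph{once}, along the segment from $x$ to $(x+\unitvec{A}+\delta\unitvec{i})+\lambda\unitvec{A}$, through the point $x+\unitvec{A}+\frac{\delta}{\lambda+1}\unitvec{i}$ whose neighborhood is $\{x+\unitvec{A},\,x+\unitvec{A}+\delta\unitvec{i}\}$, whereas you tile the strip into unit parallelograms and chain. That difference is exactly where your proof has a genuine gap: your claim that when $f(b)=f(w+(\mu+1)\unitvec{A})<+\infty$ ``all four values are finite.'' Nothing you have established gives $f(a)=f(x+\mu\unitvec{A})<+\infty$: the monotonicity of $g$ bounds $g(\mu)$ from \emph{below} by $g(0)$, not from above, and the parallelogram inequality $f(c)+f(d)\le f(a)+f(b)$ is vacuous precisely when $f(a)=+\infty$. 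This case is not hypothetical. The hypothesis $x \preceq_{f} x+\unitvec{A}$ is satisfied vacuously when $x+\unitvec{A}\notin\dom f$, and then $g(\mu)=+\infty$ for every $\mu\ge 1$, so every parallelogram with $\mu\ge 1$ has right-hand side $+\infty$ and your chain cannot get past the step $\mu=0$; yet the lemma still asserts nontrivial content there (e.g.\ it implies, together with the step $\mu=0$, that $w+\lambda\unitvec{A}\notin\dom f$ for all $\lambda\ge 1$, which your tools cannot reach). So the $+\infty$ bookkeeping is not ``routine but genuine care''; it is the missing half of the argument.

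The gap is fillable, but it needs an extra, real use of integral convexity (domain propagation), not subtraction: from $x\in\dom f$ and $w+(\mu+1)\unitvec{A}\in\dom f$, look at the point $x+(\mu+1)\unitvec{A}+\frac{(\mu+1)\delta}{\mu+2}\unitvec{i}$ on the segment joining them; its integer neighborhood is exactly $\{c,d\}$, and since it lies strictly between $c$ and $d$, integral convexity forces both $c=x+(\mu+1)\unitvec{A}$ and $d=w+\mu\unitvec{A}$ into $\dom f$; iterating downward gives $x+j\unitvec{A}\in\dom f$ for all $j\le \mu+1$, which is the finiteness you asserted. Note that this patch is essentially the paper's proof in disguise: by taking the convexity bound along the single long segment, the paper's inequality has only $f(x)$ and $f((x+\unitvec{A}+\delta\unitvec{i})+\lambda\unitvec{A})$ on the large side --- both finite in the nontrivial case --- so finiteness of $f(x+\unitvec{A})$ and $f(w)$ comes out as a \emph{conclusion} rather than being needed as an input, and no induction or chaining is required at all.
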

\begin{proof}
First note that 
$x+\unitvec{A}+ \delta \unitvec{i} 
\leq (x + \unitvec{A} +  \delta \unitvec{i}) + \lambda \unitvec{A}$.
We only need to show 
$f(x+\unitvec{A}+ \delta \unitvec{i}) 
\leq f((x + \unitvec{A} +  \delta \unitvec{i}) + \lambda \unitvec{A})$
when
$f((x + \unitvec{A} +  \delta \unitvec{i}) + \lambda \unitvec{A}) < +\infty$.
By integral convexity of $f$ we have
\begin{align*}
& \frac{1}{\lambda +1}
f((x + \unitvec{A} +  \delta \unitvec{i})  + \lambda \unitvec{A})
 +
 \frac{\lambda}{\lambda +1}  f(x) 
\\
& \geq
\tilde f(x + \unitvec{A} + \frac{\delta}{\lambda +1}  \unitvec{i})
\\ & 
= \frac{1}{\lambda +1}
f(x + \unitvec{A} +  \delta \unitvec{i})
 +
 \frac{\lambda}{\lambda +1}  f(x + \unitvec{A}) ,
\end{align*}
whereas
$f(x+\unitvec{A}) \geq f(x)$
by the assumption.
Hence
$f((x + \unitvec{A} +  \delta \unitvec{i}) + \lambda \unitvec{A})
\geq 
f(x+\unitvec{A}+ \delta \unitvec{i})$.
\end{proof}

\begin{lemma}  \label{LMdominatearound}
Let  $x \in \dom f$, $A\;(\neq \emptyset) \subseteq N$,
and assume $x \preceq_{f}  x + \unitvec{A}$.
For any $\lambda \in \ZZ_{+}$,
$\mu_{i}^{+},\mu_{i}^{-} \in \ZZ_{+} \;(i \in A)$,
and $\mu_{i}^{\circ} \in \ZZ_{+}\;(i \in N \setminus A)$,
the point

\begin{equation} \label{yx1A1iA}
 y = x + \unitvec{A}
  + \sum_{i \in A} \mu_{i}^{+}(\unitvec{A} + \unitvec{i}) 
  + \sum_{i \in A} \mu_{i}^{-} (\unitvec{A} - \unitvec{i})
  + \sum_{i \in N \setminus A} \mu_{i}^{\circ} (\unitvec{A}+ \unitvec{i})
  + \lambda \unitvec{A}
\end{equation}
is not $f$-minimal.
\end{lemma}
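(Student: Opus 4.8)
The plan is to exhibit a point strictly below $y$ with no larger $f$-value, so that $y$ cannot be $f$-minimal; the natural candidate is $z = y - \unitvec{A}$. First observe that the quantity $c := y - x - \unitvec{A}$, which is the element of $C_A$ appearing in (\ref{yx1A1iA}), is componentwise nonnegative: each coordinate $j \in A$ receives only nonnegative contributions from the four groups of generators, while each coordinate $j \in N \setminus A$ receives exactly $\mu_j^{\circ} \geq 0$. Hence $z = x + c \geq \veczero$ lies in $\ZZ_+^n$, and $z \leq y$ with $z \neq y$ since $\unitvec{A} \neq \veczero$. If $y \notin \dom f$, then $y$ is not $f$-minimal by definition, so we may assume $y \in \dom f$. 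It therefore suffices to prove $f(z) \leq f(y)$, i.e. $x + c \preceq_f (x + c) + \unitvec{A}$.

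To establish this I would prove, by induction on the number $m$ of generators in the chosen representation of $c$, the stronger ray statement
\[
 f(x + c) \leq f\bigl((x + c) + \lambda \unitvec{A}\bigr) \qquad (\forall\, \lambda \in \ZZ_+),
\]
whose case $\lambda = 1$ is exactly what is needed. For the base case $m = 0$ we have $c = \veczero$; the hypothesis $x \preceq_f x + \unitvec{A}$ gives $f(x) \leq f(x + \unitvec{A})$, and since integral convexity makes $\lambda \mapsto f(x + \lambda \unitvec{A})$ a discrete convex function of $\lambda \in \ZZ_+$ (convexity along the $0/1$ direction $\unitvec{A}$), a single nondecreasing step forces the whole sequence to be nondecreasing, so the inequality propagates to all $\lambda$. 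For the inductive step, write $c = c' + g$ with $g = \unitvec{A} + \delta \unitvec{i}$ one of the generators in (\ref{generator}) (so $\delta \in \{+1,0,-1\}$) and $c'$ a sum of $m-1$ generators, and set $w = x + c'$. When $w \in \dom f$, the inductive hypothesis gives $w \preceq_f w + \unitvec{A}$, and Lemma~\ref{LMtam2} applied to $w$ (with this $i, \delta$ and arbitrary $\lambda$) yields $w + g \preceq_f (w + g) + \lambda \unitvec{A}$ for all $\lambda$, which is the assertion for $c$.

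The main obstacle is the complementary case $w \notin \dom f$, in which Lemma~\ref{LMtam2} cannot be invoked because its hypothesis requires the base point to lie in $\dom f$. Here one must argue from the inductive hypothesis directly: since $f(w) = +\infty$, the inequality $f(w) \leq f(w + \lambda \unitvec{A})$ forces the entire ray $\{ w + \lambda \unitvec{A} \mid \lambda \in \ZZ_+ \}$ to lie outside $\dom f$, and one then has to deduce that $\lambda \mapsto f\bigl((w + g) + \lambda \unitvec{A}\bigr)$ is still minimized at $\lambda = 0$. This is exactly where the geometry of the effective domain enters: because $\dom f$ is an integrally convex set and $f$ is discrete convex along $\unitvec{A}$, the shifted ray through $w + g$ cannot re-enter $\dom f$ at a strictly positive parameter while the ray through $w$ avoids $\dom f$ altogether. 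Controlling this domain bookkeeping — guaranteeing that the ray-minimality invariant survives the passage through points outside $\dom f$ — is the delicate part of the proof; the in-domain case, by contrast, is a clean repeated application of Lemma~\ref{LMtam2}. Once the induction is complete, the case $\lambda = 1$ gives $f(y - \unitvec{A}) \leq f(y)$, so $z = y - \unitvec{A}$ witnesses that $y$ is not $f$-minimal.
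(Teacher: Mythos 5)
Your reduction of the lemma to the single inequality $f(y-\unitvec{A})\leq f(y)$ is legitimate (the candidate witness $y-\unitvec{A}=x+c$ is nonnegative, below $y$, and distinct from it), and both your base case and your in-domain inductive step are correct uses of convexity along $\unitvec{A}$ and Lemma~\ref{LMtam2}. But the case $w=x+c'\notin\dom f$ is a genuine gap, not a technicality to be ``controlled'': the assertion you invoke there --- that if the ray $\{w+\lambda\unitvec{A}\mid\lambda\in\ZZ_{+}\}$ misses $\dom f$ then the shifted ray $\{w+g+\lambda\unitvec{A}\mid\lambda\geq 1\}$ misses it too --- simply does not follow from the two facts you cite. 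As a bare statement about integrally convex sets it is false: take $n=2$, $A=\{1,2\}$, $S=\{(3,2)\}$ (a single point is integrally convex), $w=(0,0)$, $g=\unitvec{A}+\unitvec{1}=(2,1)$; the ray through $w$ in direction $(1,1)$ misses $S$, yet $w+g+\unitvec{A}=(3,2)\in S$. The only thing that rules such configurations out in your induction is the extra constraint that $w$ is reachable from a point $x\in\dom f$ through the cone $C_{A}$, and the ``no re-entry'' statement under that constraint is a cone-type domain lemma of the same depth as Lemma~\ref{LMtam5} (whose proof in the paper is itself a nontrivial minimal-element/integral-neighborhood argument). Indeed, specialized to indicator functions your unproven claim essentially \emph{is} Lemma~\ref{LMdominatearound}; asserting it amounts to assuming the hard half of what you are proving, so the induction is circular in the only case that matters.

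It is worth seeing how the paper's proof is engineered precisely to avoid this trap. It never walks from $x$ to $y$ one generator at a time, because intermediate points of such a path can leave $\dom f$. Instead it re-bases: with $x,y\in\dom f$, the fractional point $z=\frac{\beta-1}{\beta}x+\frac{1}{\beta}y$ satisfies $\tilde{f}(z)<+\infty$ by integral convexity, so its integral neighborhood $N(z)$ meets $\dom f$ in at least two points; choosing $x'=x+\unitvec{A}+\unitvec{A^{+}\cap D}-\unitvec{A^{-}\cap D}+\unitvec{A^{\circ}\cap D}\in N(z)\cap\dom f$ with $D\neq\emptyset$ minimal, one shows $x'\preceq_{f}x'+\unitvec{A}$ (minimality of $D$ is what makes the midpoint inequality collapse to an equality of interpolations), and $y$ is again of the form (\ref{yx1A1iA}) over $x'$ with index reduced by $|D|$. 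Every new base point is in $\dom f$ \emph{by construction}, so Lemma~\ref{LMtam2} is always applicable at the end. If you want to salvage the generator-peeling induction, you must first prove the constrained no-re-entry claim --- and the natural proof of that claim is exactly this re-basing argument, so nothing is saved.
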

\begin{proof}
By the definition of an $f$-minimal point,
we assume $y \in \dom f$; since otherwise we are done.
Define
\begin{equation} \label{mu1A1i}
\mu = \sum_{i \in A} (\mu_{i}^{+}+\mu_{i}^{-}) 
      + \sum_{i \in N \setminus A} \mu_{i}^{\circ} ,
\end{equation}
which serves, in our proof, as an index to measure the distance between $x$ and $y$.
If $\mu \leq 1$,
then $y$ is not $f$-minimal
by Lemma~\ref{LMtam2}.
Suppose that $\mu \geq  2$.
In the following we construct a vector 
$x'$ such that
$x' \in \dom f$, $x' \preceq_{f}  x' + \unitvec{A}$,
$y$ is represented as (\ref{yx1A1iA}) with $x'$ 
in place of $x$,
and the index $\mu'$ for that representation
is strictly smaller than $\mu$.

Define $\beta = \mu + \lambda + 1$ and
\begin{align*}
A^{+} 
& = \{ i \in A \mid \mu_{i}^{+} \geq 1 \},
\\
A^{-} 
& = \{ i \in A \mid \mu_{i}^{-} \geq 1 \},
\\
A^{=}
& = \{ i \in A \mid \mu_{i}^{+} = \mu_{i}^{-} = 0 \},
\\
A^{\circ} 
& = \{ i \in N \setminus A \mid \mu_{i}^{\circ} \geq 1 \},
\end{align*}
where we may assume, without loss of generality, that
$A^{+} \cap A^{-} = \emptyset$. 
Then (\ref{yx1A1iA}) can be rewritten as
\[
y = x + \sum_{i \in A^{+}} \mu_{i}^{+}\unitvec{i} 
  - \sum_{i \in A^{-}} \mu_{i}^{-} \unitvec{i}
  + \sum_{i \in A^{\circ}} \mu_{i}^{\circ} \unitvec{i} + \beta \unitvec{A},
\]
which shows
\[
(y-x)_{i} =
\left\{
 \begin{array}{ll}
  \beta + \mu_{i}^{+} & (i \in A^{+}), \\
  \beta - \mu_{i}^{-} & (i \in A^{-}), \\
  \beta                   & (i \in A^{=}), \\
   \mu_{i}^{\circ} & (i \in A^{\circ}), \\
   0 & (\mbox{otherwise}) .
  \end{array} \right.
\]
Consider the point
\[
 z = \frac{\beta-1}{\beta} \  x + \frac{1}{\beta} \  y ,
\]
which is not contained in $\ZZ^{n}$ since
$ z = x + (y-x)/\beta$
and
$\displaystyle
1 \leq \max \big(
\max_{i \in A^{+}} \mu_{i}^{+} ,
\max_{i \in A^{-}} \mu_{i}^{-} ,
\max_{i \in A^{\circ}} \mu_{i}^{\circ} \big)
\leq \beta - 1$
with $A^{+} \cup A^{-} \cup A^{\circ} \not= \emptyset$.
Since $f$ is integrally convex and $x, y \in \dom f$,
we have
$\tilde{f}(z) \leq ((\beta-1)/\beta) f(x) + (1/\beta) f(y) < +\infty$.
On the other hand, since
\[
(z-x)_{i} =
\left\{
 \begin{array}{ll}
  1 + ({\mu_{i}^{+}}/{\beta}) & (i \in A^{+}), \\
  1 - ({\mu_{i}^{-}}/{\beta}) & (i \in A^{-}), \\
  1                   & (i \in A^{=}), \\
   {\mu_{i}^{\circ}}/{\beta} & (i \in A^{\circ}), \\
   0 & (\mbox{otherwise}) ,
  \end{array} \right.
\]
the integral neighborhood $N(z)$ of $z$ 
consists of all points
$x'$ that can be represented as 
\begin{equation} \label{xprimxADAD}
 x' = x + \unitvec{A} + \unitvec{A^{+} \cap D} - \unitvec{A^{-} \cap D}
+ \unitvec{A^{\circ} \cap D}
\end{equation}
for a subset $D$ of $A^{+} \cup A^{-} \cup A^{\circ}$.
Since $\tilde{f}(z) < +\infty$  and $z \not\in \ZZ\sp{n}$,
we must have $|N(z) \cap \dom f| \geq 2$,
which implies that 
there exists a nonempty $D$ for which $x' \in \dom f$.
Take such $D$ that is minimal with respect to set inclusion.

We claim that 
$x' \preceq_{f} x' + \unitvec{A}$.
Obviously we have $x' \leq x' + \unitvec{A}$.
To show
$f(x') \leq f(x' + \unitvec{A})$,
we may assume 
$x' + \unitvec{A} \in \dom f$.
Then we have the following chain of inequalities:
\begin{align*}
& f(x) + f(x' +\unitvec{A}) 
\\
& =
f(x) +  f(x + 2\unitvec{A} + \unitvec{A^{+} \cap D} - \unitvec{A^{-} \cap D}
+ \unitvec{A^{\circ} \cap D}) 
 \\ &\geq
 2 \tilde{f}(x + \unitvec{A} + \frac{1}{2}\unitvec{A^{+} \cap D} 
 - \frac{1}{2}\unitvec{A^{-} \cap D}
 + \frac{1}{2}\unitvec{A^{\circ} \cap D})
& [\mbox{by integral convexity of $f$}] 
\\ &=
f(x + \unitvec{A}) + f(x + \unitvec{A} + \unitvec{A^{+} \cap D} - \unitvec{A^{-} \cap D}
+ \unitvec{A^{\circ} \cap D})
& [\mbox{by minimality of $D$}] 
\\&= f(x + \unitvec{A}) + f(x')
&  [\mbox{by (\ref{xprimxADAD}) }] 
\\& \geq f(x) + f(x')
& [\mbox{by $x \preceq_{f} x + \unitvec{A}$}] 
\end{align*}
which shows $f(x' + \unitvec{A}) \geq f(x')$.
Therefore, $x' \preceq_{f} x' + \unitvec{A}$ is true.

We finally consider the index (\ref{mu1A1i}) associated with $x'$,
which we denote by $\mu'$.
The substitution of (\ref{xprimxADAD}) into (\ref{yx1A1iA}) yields
\begin{align}
 y &= x + \unitvec{A}
 + \sum_{i \in A^{+}} \mu_{i}^{+}(\unitvec{A} + \unitvec{i}) 
 + \sum_{i \in A^{-}} \mu_{i}^{-} (\unitvec{A} - \unitvec{i})
   + \sum_{i \in A^{\circ}} \mu_{i}^{\circ} (\unitvec{A}+ \unitvec{i})
   + \lambda \unitvec{A} \nonumber
\\ & = x' + \unitvec{A} + \left(
  \sum_{i \in A^{+} \setminus D} \mu_{i}^{+}(\unitvec{A} + \unitvec{i}) +
  \sum_{i \in A^{+} \cap D} (\mu_{i}^{+}-1)(\unitvec{A} + \unitvec{i}) \right) \nonumber
\\ &\quad + \left(
  \sum_{i \in A^{-} \setminus D} \mu_{i}^{-}(\unitvec{A} - \unitvec{i}) +
  \sum_{i \in A^{-} \cap D} (\mu_{i}^{-}-1)(\unitvec{A} - \unitvec{i}) \right) \nonumber
\\ &\quad + \left(
  \sum_{i \in A^{\circ} \setminus D} \mu_{i}^{\circ}(\unitvec{A} + \unitvec{i}) +
  \sum_{i \in A^{\circ} \cap D} (\mu_{i}^{\circ}-1)(\unitvec{A} + \unitvec{i}) \right)
  + (\lambda + |D| -1) \unitvec{A}. 
\label{LMdominatearound-eq1}
\end{align}
This shows $\mu' = \mu - |D| \leq \mu -1$.

The above procedure finds  
$x' \in \dom f$ such that $x' \preceq_{f}  x' + \unitvec{A}$
and $\mu'\leq \mu -1$,
when given
$x \in \dom f$ such that $x \preceq_{f}  x + \unitvec{A}$
and $\mu \geq 2$.
By repeated application of this procedure
we can eventually arrive at 
$x'' \in \dom f$ such that $x'' \preceq_{f}  x'' + \unitvec{A}$
and $\mu''\leq 1$.
Then $y$ is not $f$-minimal by Lemma~\ref{LMtam2} for $x''$.
\end{proof}

\begin{lemma} \label{LMtam5}
If $(\alpha \unitvec{A} + C_A) \cap (\dom f) \neq \emptyset$,
then $\alpha \unitvec{A} \in \dom f$.
\end{lemma}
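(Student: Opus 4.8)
The plan is to reduce the lemma to a convex-hull statement and then argue by a minimal counterexample. First I would record a reduction: it suffices to prove that $\alpha\unitvec{A}\in\overline{\dom f}$. Indeed $\dom f$ is an integrally convex set (the effective domain of an integrally convex function), so by definition every $x\in\overline{\dom f}$ satisfies $x\in\overline{\dom f\cap N(x)}$. Applying this at $x=\alpha\unitvec{A}$ and using that the integer neighborhood of a lattice point is the singleton $N(\alpha\unitvec{A})=\{\alpha\unitvec{A}\}$, we obtain $\alpha\unitvec{A}\in\overline{\dom f\cap\{\alpha\unitvec{A}\}}$, and this set is nonempty only when $\alpha\unitvec{A}\in\dom f$. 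Hence membership of the apex in the real convex hull already forces membership in $\dom f$.

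Next, to place $\alpha\unitvec{A}$ inside $\overline{\dom f}$ I would use the two points we control: the origin $\bm{0}\in\dom f$ (a standing hypothesis of Proposition~\ref{PRproxintcnvnonneg}) and a point $y=\alpha\unitvec{A}+c$ with $c\in C_{A}$ provided by the hypothesis $(\alpha\unitvec{A}+C_{A})\cap\dom f\neq\emptyset$. A direct inspection of the generators in (\ref{generator}) shows $c\geq\bm{0}$, so every point of $(\alpha\unitvec{A}+C_{A})\cap\dom f$ dominates $\alpha\unitvec{A}$ coordinatewise; moreover, since the ray $\RR_{+}\unitvec{A}$ lies in $C_{A}$, the apex $\alpha\unitvec{A}$ sits on the segment from $\bm{0}$ to any axis point $\beta\unitvec{A}$ with $\beta\geq\alpha$, so it would even suffice to exhibit one such axis point in $\overline{\dom f}$. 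More robustly, I would run a descent: the domain is bounded, so among the finitely many integer points of $(\alpha\unitvec{A}+C_{A})\cap\dom f$ choose $y^{\ast}$ minimizing $\sum_i x_i$, equivalently minimizing the total number of generators in $c^{\ast}=y^{\ast}-\alpha\unitvec{A}\in C_{A}$. If $y^{\ast}=\alpha\unitvec{A}$ we are done, so suppose $c^{\ast}\neq\bm{0}$.

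The descent step exploits integral convexity of $\dom f$ along the segment $[\bm{0},y^{\ast}]$. For small $\epsilon>0$ the point $z=(1-\epsilon)y^{\ast}$ lies in $\overline{\dom f}$, and its integer neighborhood consists exactly of the cube corners $y^{\ast}-\unitvec{D}$ with $D\subseteq\supp(y^{\ast})$. Integral convexity then expresses $z$ as a convex combination of those corners lying in $\dom f$, and matching the coefficient of each coordinate shows that for every $i\in\supp(y^{\ast})$ some feasible corner $y^{\ast}-\unitvec{D}\in\dom f$ has $i\in D$. Each such corner has strictly smaller coordinate sum than $y^{\ast}$, so if even one of them again lay in $\alpha\unitvec{A}+C_{A}$ we would contradict the minimality of $y^{\ast}$, finishing the proof. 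The main obstacle is precisely this last point: I must guarantee that one of the forced lower neighbors stays inside the integer cone, i.e. that $c^{\ast}-\unitvec{D}\in C_{A}$ for some feasible $D$. Settling this requires a careful analysis of the cone generated by $B_{A}$ — using that its generators are $\unitvec{A}$ together with the perturbations $\unitvec{A}\pm\unitvec{i}$ and $\unitvec{A}+\unitvec{i}$ — to show that $C_{A}$ is closed under the relevant unit decrements. I expect this cone bookkeeping, rather than the convex-hull reduction, to be where the real work lies.
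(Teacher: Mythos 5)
Your opening reduction is correct (for a lattice point $x$ one has $N(x)=\{x\}$, so integral convexity of $\dom f$ turns membership of $\alpha\unitvec{A}$ in $\overline{\dom f}$ into membership in $\dom f$), and your minimal-counterexample descent has the same skeleton as the paper's proof. But the step you defer as ``cone bookkeeping'' is not bookkeeping: it is the entire content of the lemma, and with your choice of auxiliary point it is unprovable. Taking $z=(1-\epsilon)y^{\ast}$ with $\epsilon$ small, the only information integral convexity yields is that the weights $\lambda_{D}$ on feasible corners $y^{\ast}-\unitvec{D}$ satisfy the marginal equations $\sum_{D\ni i}\lambda_{D}=\epsilon\, y^{\ast}_{i}$, and these equations can be met using corners that all lie \emph{outside} $\alpha\unitvec{A}+C_{A}$. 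Concretely, let $n=3$, $A=N=\{1,2,3\}$, $\alpha=2$, $y^{\ast}=(3,3,3)$, so $c^{\ast}=y^{\ast}-2\unitvec{N}=\unitvec{N}\in C_{N}$. Assign weight $3\epsilon/2$ to each of the three corners $y^{\ast}-\unitvec{\{1,2\}}=(2,2,3)$, $y^{\ast}-\unitvec{\{1,3\}}=(2,3,2)$, $y^{\ast}-\unitvec{\{2,3\}}=(3,2,2)$, and weight $1-9\epsilon/2$ to $y^{\ast}$ itself; this convex combination equals $(3,3,3)-3\epsilon(1,1,1)=z$, so all marginals match. Yet none of these corners is in $2\unitvec{N}+C_{N}$: e.g.\ $(2,2,3)-(2,2,2)=(0,0,1)\notin C_{N}$, because every generator in (\ref{generator}) is nonnegative, the only generator with first coordinate $0$ is $\unitvec{N}-\unitvec{1}$ and the only one with second coordinate $0$ is $\unitvec{N}-\unitvec{2}$, so the only element of $C_{N}$ whose first two coordinates vanish is $\bm{0}$. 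Hence every ``forced lower neighbour'' your argument produces may be bad, no contradiction with minimality of $y^{\ast}$ follows from the constraints you actually use, and no analysis of $C_{A}$ alone can close this, since the claim ``some feasible corner stays in the cone'' is simply not implied by them.

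The paper escapes exactly this trap by tying the auxiliary point to the cone representation of $y$ rather than taking it infinitesimally close to $y$: it sets $z=\frac{\beta-1}{\beta}\,y+\frac{1}{\beta}\,\bm{0}$, where $\beta=\alpha+\sum_{i}\mu_{i}^{+}+\sum_{i}\mu_{i}^{-}+\sum_{i}\mu_{i}^{\circ}+\lambda$ is the total generator count in a representation of $y\in\alpha\unitvec{A}+C_{A}$. For this $z$, every point of $N(z)$ has the form $y-\unitvec{A}-\unitvec{A^{+}\cap D}+\unitvec{A^{-}\cap D}-\unitvec{A^{\circ}\cap D}$ for some $D\subseteq A^{+}\cup A^{-}\cup A^{\circ}$; the decrement always contains a full copy of $\unitvec{A}$, and such a point provably remains in $\alpha\unitvec{A}+C_{A}$, by lowering the coefficients $\mu_{i}^{\pm},\mu_{i}^{\circ}$ for $i\in D$ by one and replacing the coefficient of $\unitvec{A}$ by $\lambda+|D|-1\geq 0$ (a nonempty $D$ with $y'\in\dom f$ exists because $z\notin\ZZ^{n}$ and $\tilde{f}(z)<+\infty$ force $|N(z)\cap\dom f|\geq 2$). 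In your example this choice of $z$ would steer the neighbourhood toward points such as $(2,2,2)$ and $(2,2,3)-\unitvec{\{3\}}$-type decrements that do stay in the cone, rather than the three bad corners. That adaptation of the convex combination to the representation of $y$ is the missing idea; without it the proposal does not prove Lemma~\ref{LMtam5}.
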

\begin{proof}
To prove by contradiction,  take 
$y \in (\alpha \unitvec{A} + C_A) \cap (\dom f)$
that is  minimal with respect to the vector ordering (componentwise ordering)
and assume that
$y \neq \alpha \unitvec{A}$.
The vector $y$ can be represented as 
\[
 y = \alpha \unitvec{A}+
\sum_{i \in A} \mu_{i}^{+}(\unitvec{A} + \unitvec{i}) + \sum_{i \in A} \mu_{i}^{-} (\unitvec{A} - \unitvec{i})
   + \sum_{i \in N \setminus A} \mu_{i}^{\circ} (\unitvec{A}+ \unitvec{i})+ \lambda \unitvec{A} 
\]
with some
$\mu_{i}^{+},\mu_{i}^{-} \in \ZZ_{+} \;(i \in A)$,
$\mu_{i}^{\circ} \in \ZZ_{+}\;(i \in N \setminus A)$,
and $\lambda \in \ZZ_{+}$, where
\[
 \beta = \alpha + \sum_{i \in A^{+}}\mu_{i}^{+}+
  \sum_{i \in A^{-}} \mu_{i}^{-} + \sum_{i \in A^{\circ}} \mu_{i}^{\circ} + \lambda 
\]
is strictly larger than $\alpha$
since $y \neq \alpha \unitvec{A}$.
Define
\begin{align*}
A^{+} 
& = \{ i \in A \mid \mu_{i}^{+} \geq 1 \},
\\
A^{-} 
& = \{ i \in A \mid \mu_{i}^{-} \geq 1 \},
\\
A^{=}
& = \{ i \in A \mid \mu_{i}^{+} = \mu_{i}^{-} = 0 \},
\\
A^{\circ} 
& = \{ i \in N \setminus A \mid \mu_{i}^{\circ} \geq 1 \},
\end{align*}
where we may assume, without loss of generality, that
$A^{+} \cap A^{-} = \emptyset$ and $A^{-} \not= A$.
We have
\[
y_{i} =
\left\{
 \begin{array}{ll}
  \beta + \mu_{i}^{+} & (i \in A^{+}), \\
  \beta - \mu_{i}^{-} & (i \in A^{-}), \\
  \beta                   & (i \in A^{=}), \\
   \mu_{i}^{\circ} & (i \in A^{\circ}), \\
   0 & (\mbox{otherwise}) .
  \end{array} \right.
\]
Consider the point
\[
 z = \frac{\beta-1}{\beta} \ y + \frac{1}{\beta} \  \veczero .
\]
Since $f$ is integrally convex and $y, \veczero \in \dom f$,
we have
$\tilde{f}(z) \leq ((\beta-1)/\beta) f(y) + (1/\beta) f(\veczero ) 
< +\infty$.
Note that
\[
z_{i} =
\left\{
 \begin{array}{ll}
  (\beta - 1) + \mu_{i}^{+} - \mu_{i}^{+}/\beta & (i \in A^{+}), \\
  (\beta - 1) - \mu_{i}^{-} + \mu_{i}^{-}/\beta & (i \in A^{-}), \\
  (\beta - 1)                   & (i \in A^{=}), \\
   \mu_{i}^{\circ} - \mu_{i}^{\circ}/\beta & (i \in A^{\circ}), \\
   0 & (\mbox{otherwise}).
  \end{array} \right.
\]
If $A^{+} \cup A^{-} \cup A^{\circ} = \emptyset$,
we are done with a contradiction. Indeed, we then have
$z = (\alpha  + \lambda -1 ) \unitvec{A}$
and
$y = (\alpha  + \lambda ) \unitvec{A}$,
and hence 
$z \leq y$, $z \not= y$,
and $z \in \dom f$ by $f(z)=\tilde{f}(z) < +\infty$.
In the following we assume 
$A^{+} \cup A^{-} \cup A^{\circ} \not= \emptyset$,
which implies $z \not\in \ZZ\sp{n}$.

The integral neighborhood $N(z)$ of $z$ 
consists of all points
$y'$ that can be represented as 
\begin{align*}
 y' &=  (\beta -1)\unitvec{A} + \left(
  \sum_{i \in A^{+} \setminus D} \mu_{i}^{+} \unitvec{i} +
  \sum_{i \in A^{+} \cap D} (\mu_{i}^{+}-1) \unitvec{i} \right) 
\nonumber \\ 
&\quad - \left(
  \sum_{i \in A^{-} \setminus D} \mu_{i}^{-} \unitvec{i} +
  \sum_{i \in A^{-} \cap D} (\mu_{i}^{-}-1) \unitvec{i} \right) 
+ \left(
  \sum_{i \in A^{\circ} \setminus D} \mu_{i}^{\circ} \unitvec{i} +
  \sum_{i \in A^{\circ} \cap D} (\mu_{i}^{\circ}-1) \unitvec{i} \right)
\end{align*}
for a subset $D$ of $A^{+} \cup A^{-} \cup A^{\circ}$.
Since $\tilde{f}(z) < +\infty$  and $z \not\in \ZZ\sp{n}$,
we must have $|N(z) \cap \dom f| \geq 2$,
which implies that 
there exists a nonempty $D$ for which $y' \in \dom f$.
Take any $y' \in N(z) \cap \dom f$ with $D \not= \emptyset$.
Then  
$y' \leq y$ and $y' \neq y$, since $A^{-} \not= A$ and
\begin{align*}
 y' - y = - \unitvec{A} 
 - \sum_{i \in A^{+} \cap D} \unitvec{i} 
 +  \sum_{i \in A^{-} \cap D} \unitvec{i} 
 -  \sum_{i \in A^{\circ} \cap D} \unitvec{i} \leq \veczero .
\end{align*}
We also have $y' \in (\alpha \unitvec{A} + C_{A})$
by an alternative expression of $y'$:
\begin{align*}
 y' 
&=  \alpha \unitvec{A} 
  + \left(
  \sum_{i \in A^{+} \setminus D} \mu_{i}^{+}(\unitvec{A} + \unitvec{i}) +
  \sum_{i \in A^{+} \cap D} (\mu_{i}^{+}-1)(\unitvec{A} + \unitvec{i}) 
    \right) 
\nonumber
\\ &\quad + \left(
  \sum_{i \in A^{-} \setminus D} \mu_{i}^{-}(\unitvec{A} - \unitvec{i}) +
  \sum_{i \in A^{-} \cap D} (\mu_{i}^{-}-1)(\unitvec{A} - \unitvec{i}) \right) 
\nonumber
\\ &\quad 
 + \left(
  \sum_{i \in A^{\circ} \setminus D} \mu_{i}^{\circ}(\unitvec{A} + \unitvec{i}) +
  \sum_{i \in A^{\circ} \cap D} (\mu_{i}^{\circ}-1)(\unitvec{A} + \unitvec{i})
 \right)
+  (\lambda +|D| -1)\unitvec{A} .
\end{align*}
Hence $y' \in (\alpha \unitvec{A} + C_{A}) \cap (\dom f)$,
a contradiction to the minimality of $y$.
\end{proof}

We are now in the position to prove Proposition \ref{PRminimalnotincone}. 
To prove the contrapositive of the claim, 
suppose that  $y \in \alpha \unitvec{A} + C_{A}$ for some $A$.
Then $y$ can be expressed as
\[
y = \alpha \unitvec{A} 
 + \sum_{i \in A} \mu_{i}^{+}(\unitvec{A} + \unitvec{i})
    + \sum_{i \in A} \mu_{i}^{-} (\unitvec{A} - \unitvec{i})
   + \sum_{i \in N \setminus A} \mu_{i}^{\circ} (\unitvec{A}+ \unitvec{i})
   +  \lambda \unitvec{A} 
\]
for some $\mu_{i}^{+},\mu_{i}^{-}, \mu_{i}^{\circ}, \lambda \in \ZZ_{+}$.
Equivalently,
\[
y = ((\alpha-1) \unitvec{A} + \unitvec{A}) + 
    \sum_{i \in A} \mu_{i}^{+}(\unitvec{A} + \unitvec{i})
    + \sum_{i \in A} \mu_{i}^{-} (\unitvec{A} - \unitvec{i})
   + \sum_{i \in N \setminus A} \mu_{i}^{\circ} (\unitvec{A}+ \unitvec{i})
   + \lambda \unitvec{A} ,
\]
which corresponds to the right-hand side of (\ref{yx1A1iA})
with $x = (\alpha-1) \unitvec{A}$. 
By Lemma~\ref{LMtam5}, we have $\alpha \unitvec{A} \in \dom f$.
Since 
$x = (\alpha-1) \unitvec{A} \preceq_{f} \alpha \unitvec{A} =  x + \unitvec{A}$
 by Lemma~\ref{LMtam1},
Lemma~\ref{LMdominatearound} shows that $y$ is not $f$-minimal.
This completes the proof of Proposition \ref{PRminimalnotincone}.

\subsection{Proof of Proposition \ref{PRproxintcnvnonneg} for $n=2$}
\label{SCproxproofn2}

In this section we prove Proposition~\ref{PRproxintcnvnonneg} for $n=2$
as an illustration of the proof method using the tools
introduced in Section~\ref{SCprooftools}.
This also gives an alternative proof of Theorem~\ref{THproxintcnvdim2alpha}.

Recall that 
$\dom f$ is assumed to be a bounded subset of $\ZZ_{+}^{2}$,
which implies, in particular, that $\argmin f \not= \emptyset$.
Take
$x^{*}=(x_{1}^{*},x_{2}^{*})  \in \argmin f$
that is $f$-minimal.
We may assume
$x_{1}^{*} \geq x_{2}^{*}$ 
by Proposition~\ref{PRintcnvinvar} (2).
Since $x^{*}$ is $f$-minimal, 
Proposition~\ref{PRminimalnotincone}
shows that $x^{*}$ belongs to
\[
 X^{*} = 
  \{  (x_{1},x_{2}) \in \ZZ_{+}^{2} \mid  x_{1} \geq x_{2} \}
 \setminus 
  \left(  (\alpha \unitvec{A} + C_{A}) \cup (\alpha \unitvec{N} + C_{N})  \right),
\]
where $A = \{ 1 \}$ and $N = \{ 1, 2 \}$. 
On noting 
\begin{align*}
C_{A} &= \{ \mu_{1} (1,0) + \mu_{12} (1,1) \mid \mu_{1},\mu_{12} \in \ZZ_{+} \},
\\
C_{N} &= \{ \mu_{1} (1,0) + \mu_{2} (0,1) \mid \mu_{1},\mu_{2} \in \ZZ_{+} \},
\end{align*}
we see that 
$X^{*}$ consists of all integer points contained in the parallelogram
with vertices $(0,0)$, 
$(\alpha -1, 0)$,
$(2\alpha -2, \alpha -1)$,
$(\alpha -1, \alpha -1)$.
Therefore,
$ \| x^{*}\|_{\infty} \leq 2(\alpha -1)$.
Thus Proposition \ref{PRproxintcnvnonneg} for $n=2$ is proved.

\subsection{Proof of Proposition \ref{PRproxintcnvnonneg} for $n \geq 3$}
\label{SCproxproofn3}

In this section we prove Proposition~\ref{PRproxintcnvnonneg} for $n \geq 3$
by induction on $n$.
Accordingly we assume that 
Proposition~\ref{PRproxintcnvnonneg} is true for every integrally convex function
in $n-1$ variables.

Let $f: \mathbb{Z}^{n} \to \mathbb{R} \cup \{ +\infty  \}$
be an integrally convex function such that
$\dom f$ is a bounded subset of\, $\ZZ_{+}^{n}$ containing the origin $\bm{0}$.
Note that $\argmin f \not= \emptyset$ and take
$x^{*}=(x_{1}^{*},x_{2}^{*},\ldots,x_{n}^{*})  \in \argmin f$
that is $f$-minimal.
Then 
\begin{equation}\label{assumption3}
  [\veczero,x^{*}]_{\ZZ} \cap \argmin f = \{x^{*}\} .
\end{equation}
We may assume
\begin{equation}\label{assumption2}
 x_{1}^{*} \geq x_{2}^{*} \geq \cdots \geq x_{n}^{*} 
\end{equation}
by Proposition~\ref{PRintcnvinvar} (2).

The following lemma reveals a significant property
of integrally convex functions that will be used here for induction on $n$.
Note that, by (\ref{assumption3}), 
$x^{*}$ satisfies the condition imposed on $x^{\bullet}$.   

\begin{lemma}  \label{LMtam4}
Let $x^{\bullet} \in \dom f$ be an $f$-minimal point.
Then for any $i \in N$ with $x^{\bullet}_i \geq 1$ 
there exists an $f$-minimal point $x^{\circ} \in \dom f$ such that
\[
 \veczero \leq x^{\circ} \leq x^{\bullet},
\quad 
 \| x^{\circ}-x^{\bullet} \|_\infty = x^{\bullet}_{i} -  x^{\circ}_{i}  = 1.
 \]
\end{lemma}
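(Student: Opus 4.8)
The three displayed relations say precisely that $x^{\circ}$ must be a vertex $x^{\bullet}-\unitvec{D}$ of the lower unit cube at $x^{\bullet}$, where $i\in D\subseteq M:=\{\,j\in N\mid x^{\bullet}_{j}\ge 1\,\}$ (the condition $D\subseteq M$ being what guarantees $x^{\circ}\ge\veczero$). Hence the geometric part of the statement is automatic, and the whole content of the lemma is the \emph{existence} of such a $D$ for which $x^{\bullet}-\unitvec{D}\in\dom f$ and is, moreover, $f$-minimal. The plan is to produce a candidate by an extremal rule and to verify $f$-minimality by a descent driven by integral convexity.

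The engine is the midpoint inequality: for $u,v\in\dom f$, integral convexity gives $\tilde f(\tfrac{u+v}{2})\le\tfrac12(f(u)+f(v))$, and since $\tilde f(\tfrac{u+v}{2})$ is a convex combination of the values on $N(\tfrac{u+v}{2})\cap\dom f$, there is an integer point $z$ lying coordinatewise between $u$ and $v$ with $f(z)\le\tfrac12(f(u)+f(v))$. I will apply this with $v=x^{\bullet}$. By the reformulation recorded before the lemma, the $f$-minimal point $x^{\bullet}$ is the \emph{unique} minimizer of $f$ on $[\veczero,x^{\bullet}]_{\ZZ}$; thus for every $u\le x^{\bullet}$ with $u\neq x^{\bullet}$ we have $f(x^{\bullet})<f(u)$, and the midpoint step returns $z$ with $u\le z\le x^{\bullet}$ that is strictly closer to $x^{\bullet}$ and satisfies $f(z)<f(u)$. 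This ``pull toward $x^{\bullet}$'' operation is what lets me trade a far-away dominator for a nearer one.

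For the candidate, take $x^{\circ}$ to be a $\preceq_{f}$-minimal minimizer of $f$ over the box $[\veczero,x^{\bullet}-\unitvec{i}]_{\ZZ}$. Such an $x^{\circ}$ is automatically $f$-minimal (any $w\le x^{\circ}$ with $w\neq x^{\circ}$ lies in the same box, whence $f(w)\ge f(x^{\circ})$, with equality excluded by $\preceq_{f}$-minimality), and it satisfies $x^{\circ}\le x^{\bullet}$. The pull-toward-$x^{\bullet}$ step then forces $x^{\circ}_{i}=x^{\bullet}_{i}-1$ exactly: if $x^{\circ}_{i}\le x^{\bullet}_{i}-2$, then the $i$-th coordinate of $m=\tfrac12(x^{\circ}+x^{\bullet})$ is at most $x^{\bullet}_{i}-1$, so the point $z\in N(m)\cap\dom f$ produced above has $z_{i}\le x^{\bullet}_{i}-1$, i.e.\ $z\in[\veczero,x^{\bullet}-\unitvec{i}]_{\ZZ}$, while $f(z)<f(x^{\circ})$, contradicting the minimality of $x^{\circ}$ on that box.

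The main obstacle is the remaining requirement $\|x^{\circ}-x^{\bullet}\|_{\infty}=1$, i.e.\ that no coordinate $k\neq i$ need drop by two or more: the box-minimizer just constructed may well sit far below $x^{\bullet}$ in some $x_{k}$, so it need not itself be the desired witness, and one must \emph{lift} it back onto the unit cube while retaining $f$-minimality and the value $x_{i}=x^{\bullet}_{i}-1$. My plan is to confine the whole argument to the slab $\{x\mid x_{i}=x^{\bullet}_{i}-1\}$—using the hyperplane-barrier property (Corollary~\ref{COintcnvcutal}) with threshold $q=x^{\bullet}_{i}$ to keep coordinate $i$ pinned—and, within that slab, to run a pull-back descent that contracts the excess coordinates one step at a time by decomposing successive midpoints through the integral neighborhood exactly as in the proof of Lemma~\ref{LMdominatearound}; boundedness of $\dom f$ gives termination, and the terminal point, now a cube vertex beating the extremal candidate, yields the contradiction. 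The delicate point—that the midpoint-with-$x^{\bullet}$ step tends to raise coordinate $i$ back to $x^{\bullet}_{i}$, so that pinning and contraction must be made to coexist—is the technical heart of the proof.
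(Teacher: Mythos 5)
You prove two of the three required properties of your candidate, but not the third, and the third is where the lemma's entire difficulty sits. Taking $x^{\circ}$ to be a componentwise-minimal minimizer of $f$ over the box $[\veczero,x^{\bullet}-\unitvec{i}]_{\ZZ}$ does make $x^{\circ}$ $f$-minimal, and your midpoint argument correctly forces $x^{\circ}_{i}=x^{\bullet}_{i}-1$. But nothing in the proposal establishes $\|x^{\circ}-x^{\bullet}\|_{\infty}=1$: your candidate may a priori sit two or more units below $x^{\bullet}$ in some coordinate $k\neq i$, and the same midpoint trick cannot rule this out, because the integer point $z\in N\bigl(\tfrac12(x^{\circ}+x^{\bullet})\bigr)\cap\dom f$ with $f(z)<f(x^{\circ})$ may have $z_{i}=x^{\bullet}_{i}$ (the gap in coordinate $i$ is only one, so the integral neighborhood rounds it both ways); such a $z$ escapes the box over which $x^{\circ}$ was chosen minimal, and no contradiction results. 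You name this difficulty yourself, but what follows is a plan, not a proof: ``confine to the slab $x_{i}=x^{\bullet}_{i}-1$ via Corollary~\ref{COintcnvcutal} and run a pull-back descent as in Lemma~\ref{LMdominatearound}'' specifies neither the descent step, nor why its iterates stay in the slab with decreasing $f$-value, nor why its terminal point is a cube vertex. Since the lemma is used in Section~\ref{SCproxproofn3} precisely through the bound (\ref{txjxjxn}), which needs every coordinate to drop by at most one per application, the missing property is not a cosmetic detail but the load-bearing part; the proposal is incomplete exactly at what you call its ``technical heart.''

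For comparison, the paper resolves the tension by reversing which half is built in and which half is proved. It defines $x^{\circ}$ as a minimal minimizer of $f$ over the face $X=\{x\in\ZZ^{n}\mid \veczero\le x\le x^{\bullet},\ \|x-x^{\bullet}\|_{\infty}=1,\ x_{i}=x^{\bullet}_{i}-1\}$ of the unit cube, so both geometric conditions hold by construction, and then proves $f$-minimality: if some $z\le x^{\circ}$, $z\neq x^{\circ}$ had $f(z)\le f(x^{\circ})$, then with $\ell=\|z-x^{\bullet}\|_{\infty}\ge 2$ one considers $y=\tfrac{\ell-1}{\ell}x^{\bullet}+\tfrac1{\ell}z$, at $\ell_{\infty}$-distance one from $x^{\bullet}$, and decomposes $\tilde f(y)$ over $N(y)$. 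Generators whose $i$-th coordinate equals $x^{\bullet}_{i}-1$ lie in $X$ and hence have value at least $f(x^{\circ})$, while generators whose $i$-th coordinate bounces back to $x^{\bullet}_{i}$ lie in $[\veczero,x^{\bullet}]_{\ZZ}\setminus\{x^{\bullet}\}$ and hence have value strictly greater than $f(x^{\bullet})$ by the $f$-minimality of $x^{\bullet}$; in either case the inequality $\tilde f(y)\le\tfrac{\ell-1}{\ell}f(x^{\bullet})+\tfrac1{\ell}f(z)$ is violated. The bounce-back points --- the very phenomenon that blocks your argument --- are thus controlled not by the choice of $x^{\circ}$ but by the hypothesis on $x^{\bullet}$, which is the idea your proposal lacks. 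Incidentally, this same two-case argument shows that every minimizer of $f$ over your larger box $[\veczero,x^{\bullet}-\unitvec{i}]_{\ZZ}$ in fact lies in $X$, so your candidate does satisfy the lemma; but proving that is essentially the paper's proof, and it appears nowhere in your write-up. The direct repair is to replace your box by the face $X$ and supply the segment argument above.
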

\begin{proof}
Let $x^{\circ}$ be
a minimizer of $f(x)$ among those $x$ which belong to
$X = \{ x \in \ZZ^{n} \mid \veczero \leq x \leq x^{\bullet}, \ 
\| x-x^{\bullet} \|_\infty = 1, \  x_{i} = x^{\bullet}_{i} -1 \}$;
in case of multiple minimizers, 
we choose a minimal minimizer 
with respect to the vector ordering 
(componentwise ordering).
To prove  $f$-minimality of $x^{\circ}$,
suppose, to the contrary, that 
there exists
$z \in [\veczero,x^{\circ}]_{\ZZ} \setminus \{x^{\circ}\}$
with $f(z) \leq f(x^{\circ})$.
We have
$\ell = \| z-x^{\bullet} \|_\infty \geq 2$,
since otherwise $z \in X$ and this contradicts the minimality of $x^{\circ}$.

Consider $y \in \RR_{+}^{n}$ defined by 
\begin{equation}\label{prop4eq1}
  y = \frac{\ell-1}{\ell} x^{\bullet} + \frac{1}{\ell}z .
\end{equation}
The value of the local convex extension  $\tilde{f}$ of $f$ at $y$ can be represented as
\[
  \tilde{f}(y) = \sum_{y^{j} \in Y}\lambda_{j} f(y^{j})
\]
with some set $Y \subseteq N(y) \cap \dom f$ and positive
coefficients $\lambda_{j}$ such that
\begin{equation}\label{prop4eq25}
y = \sum_{y^{j} \in Y}\lambda_{j} y^{j},
\qquad 
\sum_{y^{j} \in Y} \lambda_{j} = 1.
\end{equation}
Since $\| y-x^{\bullet} \|_\infty = 1$ and $y \leq x^{\bullet}$
by (\ref{prop4eq1}),
either 
$y^{j}_{i} = x^{\bullet}_{i}-1$
or $y^{j}_{i} = x^{\bullet}_{i} $
holds for each  $y^{j} \in Y$.
Define
\[
Y^{<} = \{ y^{j} \in Y \mid y^{j}_{i} = x^{\bullet}_{i}-1 \},
\qquad
Y^{=} = \{ y^{j} \in Y \mid y^{j}_{i} = x^{\bullet}_{i} \}.
\]
Then we see
\begin{equation}\label{prop4eq3}
 \sum_{y^{j} \in Y^{<}} \lambda_{j} = \frac{x^{\bullet}_{i}-z_{i}}{\ell},
\qquad
 \sum_{y^{j} \in Y^{=}} \lambda_{j} = 1- \frac{x^{\bullet}_{i}-z_{i}}{\ell}
\end{equation}
from
\[
 y_{i} =  \frac{\ell-1}{\ell} x^{\bullet}_{i} + \frac{1}{\ell}z_{i} = 
  x^{\bullet}_{i} - \frac{x^{\bullet}_{i}-z_{i}}{\ell} .
\]

On the other hand, we have 
\begin{equation}\label{prop4intcnv}
  \tilde{f}(y) \leq \frac{\ell-1}{\ell} f(x^{\bullet}) + \frac{1}{\ell}f(z) 
\end{equation}
by integral convexity of $f$.
We divide into cases to derive a contradiction to this inequality.

Case 1 ($x^{\bullet}_{i}-z_{i} = \ell$):
We have $Y = Y^{<}$ 
by (\ref{prop4eq25}) and (\ref{prop4eq3}) and then 
$f(x^{\circ}) \leq f(y^{j})$ for all $y^{j} \in Y$
by the definition of $x^{\circ}$.
Hence 
\begin{equation}\label{prop4case1a}
   f(x^{\circ}) \leq  \sum_{y^{j} \in Y}\lambda_{j} f(y^{j}) = \tilde{f}(y) .
\end{equation}
For the right-hand side of (\ref{prop4intcnv}), 
note first that
 the $f$-minimality of $x^{\bullet}$
and $x^{\circ} \in [\veczero,x^{\bullet}]_{\ZZ} \setminus \{x^{\bullet}\}$
imply $f(x^{\bullet}) < f(x^{\circ})$.
Then it follows from 
$f(x^{\bullet}) < f(x^{\circ})$ and $f(z) \leq f(x^{\circ})$
that 
\begin{equation}\label{prop4case1b}
\frac{\ell-1}{\ell} f(x^{\bullet}) + \frac{1}{\ell}f(z) <   f(x^{\circ}) .
\end{equation}
But (\ref{prop4case1a}) and (\ref{prop4case1b}) together
contradict (\ref{prop4intcnv}).

Case 2 ($x^{\bullet}_{i}-z_{i} < \ell$):
In this case $Y^{=}$ is nonempty.
Since $x^{\bullet} \not\in Y$ by $\| x^{\bullet} - y \|_\infty = 1$,
every $y^{j} \in Y$ is distinct from $x^{\bullet}$,
whereas 
$y^{j} \in [\veczero,x^{\bullet}]_{\ZZ}$.
Then the assumed $f$-minimality of $x^{\bullet}$ implies
\begin{equation} \label{prm2}
f(y^{j}) > f(x^{\bullet}) 
\qquad (\forall \, y^{j} \in Y=Y^{=} \cup Y^{<}) .
\end{equation}
We also have
\begin{equation} \label{prm1}
f(y^{j}) \geq f(x^{\circ}) \geq f(z) 
\qquad (\forall \, y^{j} \in Y^{<}) ,
\end{equation}
which is obvious from the definitions of $x^{\circ}$ and $z$.
Then we have
\begin{align*}
 \tilde{f}(y) &=
 \sum_{y^{j} \in Y^{=}} \lambda_{j} f(y^{j})
+ \sum_{y^{j} \in Y^{<}} \lambda_{j} f(y^{j}) 
\\ & & [\mbox{by (\ref{prm2}), (\ref{prm1}), $Y^{=} \not= \emptyset$}] 
\\ &> 
\sum_{y^{j} \in Y^{=}} \lambda_{j}  f(x^{\bullet}) + 
 \sum_{y^{j} \in Y^{<}} \lambda_{j} f(z) 
\\ & & [\mbox{by (\ref{prop4eq3})}]
\\ &=
(1-\frac{x^{\bullet}_{i}-z_{i}}{\ell}) f(x^{\bullet})
 + \frac{x^{\bullet}_{i}-z_{i}}{\ell}f(z) 
\\ & & [\mbox{by \ } 
 \frac{x^{\bullet}_{i}-z_{i}}{\ell} \geq \frac{1}{\ell}, \   f(x^{\bullet}) \leq f(z)]
\\ & \geq
\frac{\ell-1}{\ell} f(x^{\bullet}) + \frac{1}{\ell}f(z) .
\end{align*}
This is a contradiction to (\ref{prop4intcnv}).
\end{proof}

Lemma~\ref{LMtam4} can be applied repeatedly, since the resulting point
$x^{\circ}$ satisfies the condition imposed on the initial point $x^{\bullet}$.   
Starting with
$x^{\bullet} = x^{*}$
we apply Lemma~\ref{LMtam4} repeatedly with $i = n$.
After $x_{n}^{*}$ applications, we arrive at a point
$\hat{x} = (\hat{x}_{1},\hat{x}_{2},\ldots,\hat{x}_{n-1},0)$.
This point $\hat{x}$ is $f$-minimal and
\begin{equation}\label{txjxjxn}
x_{j}^{*} - x_{n}^{*} \leq  \hat{x}_{j} 
\qquad (j=1,2,\ldots,n-1).
\end{equation}

We now consider a function $\hat{f}: \ZZ^{n-1} \to \RR \cup \{+\infty\}$
defined by 
\[
\hat{f}(x_{1},x_{2},\ldots,x_{n-1}) = 
\left\{ 
\begin{array}{ll}
f(x_{1},x_{2},\ldots,x_{n-1},0)
& (0 \leq x_{j} \leq \hat{x}_{j} \ (j=1,2,\ldots,n-1) ), 
\\
+\infty
& \mbox{(otherwise)}.
\end{array} \right.
\]
This function $\hat{f}$ is an integrally convex function
in $n-1$ variables,
and the origin $\veczero$ is $\alpha$-local minimal for $\hat{f}$.
By the induction hypothesis, we can apply Proposition~\ref{PRproxintcnvnonneg} to $\hat{f}$
to obtain
\begin{equation}\label{proxbdn1}
 \| \hat{x} \|_{\infty} \leq \beta_{n-1} (\alpha - 1).
\end{equation}
Note that $\hat{x}$ is the unique minimizer of $\hat{f}$.

Combining (\ref{txjxjxn})  and (\ref{proxbdn1}) we obtain
\begin{equation}\label{eq1}
 x_{1}^{*} - x_{n}^{*} \leq \beta_{n-1} (\alpha - 1) .
\end{equation}
We also have
\begin{equation}\label{eqM2a}
  x_{n}^{*} \leq \frac{n-1}{n+1} x_{1}^{*}  + \frac{2(\alpha - 1)}{n+1} 
\end{equation}
as a consequence of $f$-minimality of $x^{*}$; see Lemma~\ref{LMdifsum} below.
It follows from  (\ref{eq1}) and (\ref{eqM2a}) that
\[
 x_{1}^{*}  
\ \leq \ 
  x_{n}^{*} + \beta_{n-1} (\alpha - 1) 
\ \leq \ 
 \frac{n-1}{n+1} x_{1}^{*}  + \frac{2(\alpha - 1)}{n+1} + \beta_{n-1} (\alpha - 1) .
\]
This implies
\[ 
 x_{1}^{*}   \leq  
\left(\frac{n+1}{2} \beta_{n-1} + 1 \right) (\alpha - 1)
= \beta_{n} (\alpha - 1),
\] 
where the recurrence relation
\[ 
 \beta_{n} =  \frac{n+1}{2} \beta_{n-1}  + 1
\] 
is used.

It remains to derive inequality (\ref{eqM2a}) from $f$-minimality of $x^{*}$.

\begin{lemma}  \label{LMdifsum}
The following inequalities hold for $x^{*}$ and $\alpha$.
\\
{\rm (1)} 
\vspace{-0.5\baselineskip}
\begin{equation}\label{coneineqn}
 \sum_{i=1}^{n-1}(x_{i}^{*} - x_{n}^{*}) \geq x_{n}^{*} - \alpha + 1.
\end{equation}
{\rm (2)} 
\vspace{-0.5\baselineskip}
\begin{equation}\label{coneineq1}
 \sum_{i=2}^{n}(x_{1}^{*} - x_{i}^{*}) \geq x_{1}^{*} - \alpha + 1.
\end{equation}
{\rm (3)} 
\vspace{-0.5\baselineskip}
\begin{equation}\label{eqM2}
  x_{n}^{*} \leq \frac{n-1}{n+1} x_{1}^{*}  + \frac{2(\alpha - 1)}{n+1} .
\end{equation}
\end{lemma}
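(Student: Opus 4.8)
The plan is to derive the first two inequalities from the $f$-minimality of $x^{*}$ via Proposition~\ref{PRminimalnotincone}, and then to obtain (3) as a purely arithmetic consequence of (1) and (2). The key observation is that both (1) and (2) are encoded in a single instance of Proposition~\ref{PRminimalnotincone}, namely the choice $A=N$: since $x^{*}$ is $f$-minimal, we have $x^{*}\notin\alpha\unitvec{N}+C_{N}$, where $\unitvec{N}=\bm{1}$ and, by (\ref{generator})--(\ref{coneCAdef}), $C_{N}$ is the integer cone generated by the vectors $\bm{1}+\unitvec{i}$ and $\bm{1}-\unitvec{i}$ $(i\in N)$ together with $\bm{1}$. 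I would prove (1) and (2) by contraposition: assuming the relevant inequality fails, I exhibit an explicit nonnegative integer combination placing $x^{*}$ in $\alpha\unitvec{N}+C_{N}$, contradicting Proposition~\ref{PRminimalnotincone}.

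For (1), first note that it holds trivially when $x_{n}^{*}\leq\alpha-1$, since then its right-hand side is nonpositive while its left-hand side is nonnegative by (\ref{assumption2}). So suppose instead that $\sum_{i=1}^{n-1}(x_{i}^{*}-x_{n}^{*})\leq x_{n}^{*}-\alpha$; the nonnegativity of the left-hand side then forces $x_{n}^{*}\geq\alpha$. Writing $c_{i}=x_{i}^{*}-x_{n}^{*}\geq 0$ $(i=1,\dots,n-1)$ and setting $\lambda=(x_{n}^{*}-\alpha)-\sum_{i=1}^{n-1}c_{i}\geq 0$, a direct coordinatewise check shows
\[
 x^{*}=\alpha\unitvec{N}+\sum_{i=1}^{n-1}c_{i}(\bm{1}+\unitvec{i})+\lambda\,\bm{1},
\]
which places $x^{*}$ in $\alpha\unitvec{N}+C_{N}$, a contradiction. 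Hence (1) holds.

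Inequality (2) is handled symmetrically, using the generators $\bm{1}-\unitvec{i}$ in place of $\bm{1}+\unitvec{i}$. It is trivial when $x_{1}^{*}\leq\alpha-1$; otherwise, assuming $\sum_{i=2}^{n}(x_{1}^{*}-x_{i}^{*})\leq x_{1}^{*}-\alpha$ (which forces $x_{1}^{*}\geq\alpha$) and putting $d_{i}=x_{1}^{*}-x_{i}^{*}\geq 0$ $(i=2,\dots,n)$ and $\lambda=(x_{1}^{*}-\alpha)-\sum_{i=2}^{n}d_{i}\geq 0$, one verifies
\[
 x^{*}=\alpha\unitvec{N}+\sum_{i=2}^{n}d_{i}(\bm{1}-\unitvec{i})+\lambda\,\bm{1},
\]
again contradicting $x^{*}\notin\alpha\unitvec{N}+C_{N}$.

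Finally, (3) follows by eliminating the sum $S=\sum_{i=1}^{n}x_{i}^{*}$: rewriting (1) as $S\geq(n+1)x_{n}^{*}-\alpha+1$ and (2) as $S\leq(n-1)x_{1}^{*}+\alpha-1$, and comparing the two bounds on $S$, gives $(n+1)x_{n}^{*}\leq(n-1)x_{1}^{*}+2(\alpha-1)$, which is exactly (\ref{eqM2}). The only real content lies in the two representations above; the main point to get right is the choice of generators ($\bm{1}+\unitvec{i}$ for the smallest coordinate $x_{n}^{*}$, and $\bm{1}-\unitvec{i}$ for the largest coordinate $x_{1}^{*}$) and the verification that the slack coefficient $\lambda$ is nonnegative precisely when the corresponding inequality fails, so that all coefficients are genuine nonnegative integers and Proposition~\ref{PRminimalnotincone} indeed applies.
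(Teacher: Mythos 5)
Your proposal is correct and follows essentially the same route as the paper: both (1) and (2) are proved by exhibiting the identical explicit representations of $x^{*}$ in $\alpha \unitvec{N} + C_{N}$ (using generators $\bm{1}+\unitvec{i}$ and $\bm{1}-\unitvec{i}$, respectively) and invoking Proposition~\ref{PRminimalnotincone}, exactly as the paper does. Your derivation of (3) by eliminating $S=\sum_{i}x_{i}^{*}$ is just a cosmetic rearrangement of the paper's step of adding (1) and (2) via the identity $\sum_{i=1}^{n-1}(x_{i}^{*}-x_{n}^{*})+\sum_{i=2}^{n}(x_{1}^{*}-x_{i}^{*})=n(x_{1}^{*}-x_{n}^{*})$.
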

\begin{proof}
(1)
To prove by contradiction, suppose that
 $\displaystyle \sum_{i=1}^{n-1}(x_{i}^{*} - x_{n}^{*}) \leq x_{n}^{*} - \alpha$.
Then the expression
\[
 x^{*} = \alpha \unitvec{N} + \sum_{i=1}^{n-1}(x_{i}^{*} - x_{n}^{*})(\unitvec{N}+\unitvec{i}) + 
 \left( x_{n}^{*}-\alpha-\sum_{i=1}^{n-1}(x_{i}^{*} - x_{n}^{*})\right) \unitvec{N}
\]
shows $x^{*} \in \alpha \unitvec{N} + C_{N}$.
By Proposition~\ref{PRminimalnotincone},
 this contradicts the fact that $x^{*}$ is $f$-minimal.

(2) 
To prove by contradiction, suppose that
$\displaystyle \sum_{i=2}^{n}(x_{1}^{*} - x_{i}^{*}) \leq x_{1}^{*} - \alpha$.
Then the expression
\[
 x^{*} = \alpha \unitvec{N} + \sum_{i=2}^{n}(x_{1}^{*} - x_{i}^{*})(\unitvec{N}-\unitvec{i}) + 
 \left( x_{1}^{*}-\alpha-\sum_{i=2}^{n}(x_{1}^{*} - x_{i}^{*})\right) \unitvec{N}
\]
shows $x^{*} \in \alpha \unitvec{N} + C_{N}$.
By Proposition~\ref{PRminimalnotincone},
 this contradicts the fact that $x^{*}$ is $f$-minimal.

(3)
Since
\[
 \sum_{i=1}^{n-1}(x_{i}^{*} - x_{n}^{*}) + \sum_{i=2}^{n}(x_{1}^{*} - x_{i}^{*}) = n (x_{1}^{*} - x_{n}^{*}) ,
\]
the addition of (\ref{coneineqn}) and (\ref{coneineq1}) yields
\[
 n (x_{1}^{*} - x_{n}^{*}) \geq x_{1}^{*} +x_{n}^{*} - 2(\alpha - 1) ,
\]
which is equivalent to (\ref{eqM2}).
\end{proof}

This completes the proof of
Proposition~\ref{PRproxintcnvnonneg}, and hence that of Theorem~\ref{THproxintcnv} (1).

\subsection{Estimation of $\beta_{n}$}
\label{SCproxproofbeta}

The estimate of $\beta_{n}$ 
given in Theorem~\ref{THproxintcnv} (2) is derived in this section.

The recurrence relation (\ref{eqMrec1}) can be rewritten as
\[
 \frac{2^{n}}{(n+1)!} \beta_{n} =
 \frac{2^{n-1}}{n!} \beta_{n-1} +  \frac{2^{n}}{(n+1)!} ,
\]
from which follows
\begin{align}
 \frac{2^{n}}{(n+1)!} \beta_{n} 
& =
 \frac{2^{2}}{3!} \beta_{2}
  +  \sum_{k=3}^{n}\frac{2^{k}}{(k+1)!} 
=   \frac{4}{3} +  \sum_{k=3}^{n}\frac{2^{k}}{(k+1)!} . 
\label{prfbbd1}
\end{align}
For the last term we have

\begin{align}
  \sum_{k=3}^{n}\frac{2^{k}}{(k+1)!} 
& \leq
  \sum_{k=3}^{7}\frac{2^{k}}{(k+1)!} 
 +  \sum_{k=8}^{\infty}\frac{2^{k}}{(k+1)!} 
 \leq
\frac{167}{315} 
\label{prfbbd2}
\end{align}
since 
\begin{align*}
  \sum_{k=3}^{7}\frac{2^{k}}{(k+1)!} 
& =
\frac{2^{3}}{4!} 
+ \frac{2^{4}}{5!} 
+ \frac{2^{5}}{6!} 
+ \frac{2^{6}}{7!} 
+ \frac{2^{7}}{8!}
 = \frac{166}{315}  
\ ( \approx 0.53),
\\
  \sum_{k=8}^{\infty}\frac{2^{k}}{(k+1)!} 
& \leq 
  \frac{2^{8}}{9!}  \sum_{k=1}^{\infty}\left( \frac{2}{10} \right)^{k-1} 
 = \frac{320}{9!} 
\ (\approx 8.8 \times 10^{-4} ) 
 < \frac{1}{315} . 
\end{align*}
Substitution of (\ref{prfbbd2}) into (\ref{prfbbd1}) yields
\begin{align*}
\beta_{n} 
& \leq 
 \left(   \frac{4}{3} + \frac{167}{315}     \right)
 \frac{(n+1)!}{2^{n}}  
= \frac{587}{315} \times
 \frac{(n+1)!}{2^{n}} 
\leq 
 \frac{(n+1)!}{2^{n-1}} .
\end{align*}
Thus the upper bound (\ref{eqMrec2est}) is proved.

\section{Optimization of Integrally Convex Functions}
\label{SCalgimpl}

In spite of the facts that the factor
$\beta_{n}$ of the proximity bound is superexponential in $n$ 
and that integral convexity is not stable under scaling,
we can design a proximity-scaling type algorithm
for minimizing integrally convex functions
with bounded effective domains.
The algorithm runs in 
$C(n) \log_{2} K_{\infty}$ time
for some constant $C(n)$ depending only on $n$,
where 
$K_{\infty}$ $(>0)$ 
denotes the $\ell_{\infty}$-size of the effective domain. 
This means that, if the dimension $n$ is fixed
and treated as a constant,
the algorithm is polynomial in the problem size.
Note that no algorithm 
for integrally convex function minimization
can be polynomial in $n$,
since any function  
on the unit cube $\{ 0,1 \}\sp{n}$ is integrally convex.

The proposed algorithm is a modification of the generic
proximity-scaling algorithm given in the Introduction.
In Step~S1, we replace  
the function $\tilde f(y) = f(x + \alpha y)$
with its restriction to the 
discrete rectangle  
$\{ y \in \mathbb{Z}\sp{n} \mid 
\| \alpha y \|_{\infty} \leq \beta_{n} (2\alpha - 1) \}$,
which is denoted by $\hat f(y)$.
Then a local minimizer of $\hat f(y)$ is found to update $x$ to $x+ \alpha y$. 
Note that a local minimizer of $\hat f(y)$ 
can be found, 
e.g., by any descent method (the steepest descent method, in particular).

\begin{tabbing}     
\= {\bf Proximity-scaling algorithm for integrally convex functions}%
\\
\> \quad  S0: 
   \= Find an initial vector $x$ with $f(x) < +\infty$, and set 
   $\alpha := 2\sp{\lceil \log_{2} K_{\infty} \rceil}$.
\\
\> \quad  S1:
   \>  Find an integer vector $y$  that locally minimizes  
\\ \> \> \quad 
      $\hat f(y) = 
   \left\{ \begin{array}{ll}  
  f(x + \alpha y) 
    & (\| \alpha y \|_{\infty} \leq \beta_{n} (2\alpha - 1) ) , \\
   +\infty & (\mbox{otherwise}) , \\
 \end{array}\right.
$
\\ \> \>
    in the sense of   
$\hat f(y) \leq \hat f(y +  d)$ ($\forall d \in  \{ -1, 0, +1 \}^{n}$)
\\ \> \>
   (e.g., by the steepest descent method),  
   and set $x:= x+ \alpha y$.  \\
\> \quad  S2: 
    \> If $\alpha = 1$, then stop \
       ($x$ is a minimizer of $f$).             
\\
\> \quad  S3: 
  \> Set  $\alpha:=\alpha/2$, and go to S1.  
\end{tabbing}

\begin{tabbing}     
\= {\bf The steepest descent method to locally minimize $\hat f(y)$}%
\\
\> \quad  D0: 
   \= Set $y := \bm{0}$.
\\
\> \quad  D1:
   \>  Find $d \in  \{ -1, 0, +1 \}^{n}$ that minimizes $\hat f(y +  d)$.
\\
\> \quad  D2: 
    \> If $\hat f(y) \leq \hat f(y +  d)$, then stop \
       ($y$ is a local minimizer of $\hat f$).             
\\
\> \quad  D3: 
  \> Set  $y:= y+d$, and go to D1.  
\end{tabbing}

The correctness of the algorithm can be shown as follows.
We first assume that $f$ has a unique (global) minimizer $x\sp{*}$.
Let $x\sp{2\alpha}$ denote 
the vector $x$ at the beginning of Step~S1, and define
\begin{align*}
 f\sp{(\alpha)}(x) &= 
   \left\{ \begin{array}{ll}  
  f(x)  & (\| x - x\sp{2\alpha} \|_{\infty} \leq \beta_{n} (2\alpha - 1) ) , \\
   +\infty & (\mbox{otherwise}) , \\
 \end{array}\right.
\\
\hat f\sp{(\alpha)}(y) &= 
   \left\{ \begin{array}{ll}  
  f(x\sp{2\alpha} + \alpha y) 
           & (\| \alpha y \|_{\infty} \leq \beta_{n} (2\alpha - 1) ) , \\
   +\infty & (\mbox{otherwise}) . \\
 \end{array}\right.
\end{align*}
Note that $f\sp{(\alpha)}$ is integrally convex, whereas
$\hat f\sp{(\alpha)}$ is not necessarily so. 
Let $\hat y\sp{\alpha}$ be the output of Step~S1
and 
$x\sp{\alpha} = x\sp{2\alpha} + \alpha \hat y\sp{\alpha}$.
Then
$\hat y\sp{\alpha}$ is a local minimizer of $\hat f\sp{(\alpha)}$
and
$x\sp{\alpha} - x\sp{2\alpha} 
= \alpha \hat y\sp{\alpha} \in (\alpha \mathbb{Z})\sp{n}$.

\begin{lemma}  \label{LMoptindomalpha}
$x\sp{*} \in \dom f\sp{(\alpha)}$
for all $\alpha$.
\end{lemma}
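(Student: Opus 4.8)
The plan is to prove the lemma by induction on the successive scaling phases, that is, on the decreasing sequence of scale units $\alpha = \alpha_0, \alpha_0/2, \ldots, 1$ produced by the algorithm, where $\alpha_0 = 2^{\lceil \log_{2} K_{\infty} \rceil}$. Since $f\sp{(\alpha)}(x) = f(x)$ exactly on the box $\{ x \in \ZZ\sp{n} \mid \|x - x\sp{2\alpha}\|_{\infty} \leq \beta_{n}(2\alpha - 1) \}$ and $x\sp{*} \in \dom f$ always, the claim $x\sp{*} \in \dom f\sp{(\alpha)}$ is equivalent to the distance estimate $\|x\sp{*} - x\sp{2\alpha}\|_{\infty} \leq \beta_{n}(2\alpha - 1)$. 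Thus the whole proof reduces to controlling how far $x\sp{*}$ can be from the current iterate $x\sp{2\alpha}$.

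For the base case $\alpha = \alpha_0$, the iterate $x\sp{2\alpha_0}$ is the initial point found in Step~S0, so $x\sp{2\alpha_0}, x\sp{*} \in \dom f$ and hence $\|x\sp{*} - x\sp{2\alpha_0}\|_{\infty} \leq K_{\infty}$ by the definition of $K_{\infty}$. Since $\beta_{n} \geq 1$ and $\alpha_0 \geq K_{\infty} \geq 1$, we have $\beta_{n}(2\alpha_0 - 1) \geq 2\alpha_0 - 1 \geq \alpha_0 \geq K_{\infty}$, which settles the base case. For the inductive step I would assume $x\sp{*} \in \dom f\sp{(2\alpha)}$ and deduce $x\sp{*} \in \dom f\sp{(\alpha)}$. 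The key observation is that $x\sp{2\alpha}$ is the output of the previous phase, namely $x\sp{2\alpha} = x\sp{4\alpha} + 2\alpha \hat y\sp{2\alpha}$ with $\hat y\sp{2\alpha}$ a local minimizer of $\hat f\sp{(2\alpha)}$. Translating this local minimality through the identity $x\sp{2\alpha} + 2\alpha d = x\sp{4\alpha} + 2\alpha(\hat y\sp{2\alpha} + d)$ shows that $f\sp{(2\alpha)}(x\sp{2\alpha}) \leq f\sp{(2\alpha)}(x\sp{2\alpha} + 2\alpha d)$ for all $d \in \{ -1, 0, +1 \}\sp{n}$; that is, $x\sp{2\alpha}$ is a genuine $2\alpha$-local minimizer of $f\sp{(2\alpha)}$.

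Now $f\sp{(2\alpha)}$ is integrally convex, being the restriction of $f$ to an integer box (an integrally convex set), so Theorem~\ref{THproxintcnv}~(1) applies to $f\sp{(2\alpha)}$ with scale unit $2\alpha$ and $2\alpha$-local minimizer $x\sp{2\alpha}$: there is a minimizer $x\sp{**}$ of $f\sp{(2\alpha)}$ with $\|x\sp{2\alpha} - x\sp{**}\|_{\infty} \leq \beta_{n}(2\alpha - 1)$. The induction hypothesis $x\sp{*} \in \dom f\sp{(2\alpha)}$ together with the fact that $f\sp{(2\alpha)} = f$ on its effective domain forces $x\sp{*}$ to be a minimizer of $f\sp{(2\alpha)}$, and uniqueness of the global minimizer of $f$ gives $x\sp{**} = x\sp{*}$. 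Hence $\|x\sp{2\alpha} - x\sp{*}\|_{\infty} \leq \beta_{n}(2\alpha - 1)$, which is precisely $x\sp{*} \in \dom f\sp{(\alpha)}$, completing the induction. The main obstacle, and the only place where care is really needed, is the two-fold use of the induction hypothesis in the inductive step: first to certify that $x\sp{*}$ actually lies in the box on which $f\sp{(2\alpha)}$ coincides with $f$, so that $x\sp{*}$ is the unique minimizer of $f\sp{(2\alpha)}$ and may be identified with the proximity theorem's $x\sp{**}$; and second in correctly converting the box-restricted local optimality of the descent output $\hat y\sp{2\alpha}$ into unconstrained $2\alpha$-local optimality of $x\sp{2\alpha}$ for the integrally convex function $f\sp{(2\alpha)}$.
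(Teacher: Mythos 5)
Your proof is correct and takes essentially the same approach as the paper's: an induction on descending scale units in which the output of the previous phase is recognized as a $2\alpha$-local minimizer of the integrally convex restriction $f\sp{(2\alpha)}$, Theorem~\ref{THproxintcnv} is applied to that restriction, and uniqueness of the global minimizer identifies the theorem's minimizer with $x\sp{*}$. The only differences are cosmetic: the paper indexes the inductive step as $x\sp{*} \in \dom f\sp{(\alpha)} \Rightarrow x\sp{*} \in \dom f\sp{(\alpha/2)}$ rather than your $x\sp{*} \in \dom f\sp{(2\alpha)} \Rightarrow x\sp{*} \in \dom f\sp{(\alpha)}$, and you spell out the base case and the identification $x\sp{**}=x\sp{*}$ more explicitly than the paper does.
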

\begin{proof}
This is obviously true in the initial phase with 
$\alpha = 2\sp{\lceil \log_{2} K_{\infty} \rceil}$.
To prove 
$x\sp{*} \in \dom f\sp{(\alpha)}$
by induction on descending $\alpha$,
we show that
$x\sp{*} \in \dom f\sp{(\alpha)}$
implies 
$x\sp{*} \in \dom f\sp{(\alpha/2)}$.
Since 
$x\sp{*} \in \dom f\sp{(\alpha)}$
and 
$x\sp{*} \in \argmin f$, we have  
$x\sp{*} \in \argmin f\sp{(\alpha)}$.
On the other hand, $x\sp{\alpha}$ is an $\alpha$-local minimizer of $f\sp{(\alpha)}$, since 
$\hat y\sp{\alpha}$ is a local minimizer of $\hat f\sp{(\alpha)}$.
Then,
by the proximity theorem  (Theorem \ref{THproxintcnv}) 
for $f\sp{(\alpha)}$,
 we obtain
$\|x\sp{\alpha} - x\sp{*} \|_{\infty} \leq  \beta_{n} (\alpha - 1)$,
which shows $x\sp{*} \in \dom f\sp{(\alpha/2)}$.
\end{proof}

In the final phase with $\alpha = 1$,
$f\sp{(\alpha)}$ is an integrally convex function,
and hence, by Theorem \ref{THintcnvlocopt},
an $\alpha$-local minimizer of $f\sp{(\alpha)}$
is a global minimizer of $f\sp{(\alpha)}$.
This observation, with Lemma~\ref{LMoptindomalpha} above,
shows that the output of the algorithm is a global minimizer of $f$.

The complexity of the algorithm can be analyzed as follows.
The number of iterations in the descent method
is bounded by the total number of points in  
$Y = \{ y\in \mathbb{Z}\sp{n} \mid 
     \| \alpha y \|_{\infty} \leq \beta_{n} (2\alpha -1) \}$,
which is bounded by
$(4\beta_{n})\sp{n}$.
For each $y$ 
we examine all of its $3\sp{n}$ neighboring points
to find a descent direction or verify its local minimality.
Thus Step~S1, which updates $x\sp{2\alpha}$ to $x\sp{\alpha}$,
can be done with at most $(12\beta_{n})\sp{n}$ function evaluations.
The number of scaling phases is $\log_{2} K_{\infty}$.
Therefore, the time complexity (or the number of function evaluations)
is bounded by
$(12\beta_{n})\sp{n}\log_{2} K_{\infty}$.
For a fixed $n$, this gives a polynomial bound 
$O(\log_{2} K_{\infty})$ in the problem size.

Finally, we describe how to get rid of the uniqueness assumption
of the minimizer.
Consider a perturbed function
$f_{\varepsilon}(x) 
= f(x) +
\sum_{i=1}\sp{n} \varepsilon\sp{i} x_{i}$
with a sufficiently small $\varepsilon >0$.
By the assumed boundedness of the effective domain of $f$,
the perturbed function has a minimizer, which is unique
as a result of the perturbation.
To find the minimum of $f_{\varepsilon}$ 
it is not necessary to explicitly introduce parameter $\varepsilon$
into the algorithm,  but 
a lexicographically smallest local minimizer $y$ of 
$\hat f(y)$ should be found in Step~S1.

\begin{remark} \rm 
Some technical points are explained here.
By working with $f\sp{(\alpha)}$, we can 
bound the number of iterations for finding 
an $\alpha$-local minimizer
in terms of the number of integer vectors contained in 
$\dom \hat f\sp{(\alpha)}$.
The vector $x\sp{\alpha}$ is an $\alpha$-local minimizer 
for $f\sp{(\alpha)}$, but not necessarily for the original function $f$.
This is why we apply the proximity theorem to $f\sp{(\alpha)}$ 
in the proof of Lemma~\ref{LMoptindomalpha}.
\finbox
\end{remark}

\begin{remark} \rm 
The proximity bound $\beta_{n} (\alpha - 1)$
in Theorem~\ref{THproxintcnv} is linear in $\alpha$.
This linear dependence on $\alpha$ is critical for the 
complexity $O(\log_{2} K_{\infty})$ of the algorithm when $n$ is fixed.
Suppose, for example, that the proximity bound is 
$\beta_{n} (\alpha\sp{m} - 1)$
for some $m >1$. Then 
in the above analysis,
$(2\alpha -1)$ should be replaced by $((2\alpha)\sp{m} - 1)$,
and the total number of points in  
$Y = \{ y\in \mathbb{Z}\sp{n} \mid 
     \| \alpha y \|_{\infty} \leq \beta_{n} ((2\alpha)\sp{m} -1) \}$
is bounded by
$(2\sp{m+1}\beta_{n})\sp{n} \alpha\sp{(m-1)n}$.
The sum of $\alpha\sp{(m-1)n}$ over 
$\alpha = 1, 2, 2\sp{2},\ldots, 2\sp{\lceil \log_{2} K_{\infty} \rceil}$
is of the order of $K_{\infty}\sp{(m-1)n}$.
Then the proposed algorithm will not 
be polynomial in $\log_{2} K_{\infty}$.
Thus the particular form 
 $\beta_{n} (\alpha - 1)$ of our proximity bound
is important for our algorithm.
\finbox
\end{remark}

\section{Concluding Remarks}
\label{SCconrem}

As shown in this paper, 	
the nice properties of ${\rm L}^{\natural}$-convex functions
such as stability under scaling 
and the proximity bound $n(\alpha -1)$ 
are not shared by integrally convex functions in general.
Two subclasses of integrally convex functions
which still enjoy these nice properties
have been introduced in \cite{MMTT17dmpc}
based on discrete midpoint convexity 
(\ref{lnatfmidconv})
for every pair $(x, y) \in \ZZ\sp{n} \times \ZZ\sp{n}$
with $\| x - y \|_{\infty} \geq 2$
or $\| x - y \|_{\infty} = 2$.
Both classes of such functions
are superclasses of ${\rm L}^{\natural}$-convex functions, 
subclasses of integrally convex functions,
and closed under scaling for all $n$
and admit a proximity theorem with the bound $n (\alpha -1)$ 
for all $n$.
See \cite{MMTT17dmpc} for details.

\subsection*{Acknowledgements}
The authors thank Yoshio Okamoto for communicating a relevant reference.
This research was initiated at
the Trimester Program ``Combinatorial Optimization''
at Hausdorff Institute of Mathematics, 2015.
This work was supported by The Mitsubishi Foundation, 
CREST, JST, Grant Number JPMJCR14D2, Japan, and 
JSPS KAKENHI Grant Numbers 26350430, 26280004, 16K00023, 17K00037.


\appendix

\section{An Alternative Proof of Theorem~\ref{THfavtarProp33}}
\label{SCintcnvD2proof}

Here is a proof of Theorem~\ref{THfavtarProp33} 
(local characterization of integral convexity)
that is shorter than the original proof in \cite{FT90}
and valid for functions defined on general integrally convex sets
rather than discrete rectangles.

Obviously, (a) implies (b). 
The proof for the converse,
(b) $\Rightarrow$ (a) , is given by the following two lemmas,
where integral convexity of $\dom f$ and condition {\rm (b)} are assumed.

\begin{lemma} \label{LMboxD2}
Let $B \subseteq \RR\sp{n}$ be a box of size two with integer vertices,
i.e.,
$B = [ {a}, {a} + 2 \bm{1} ]_{\RR}$ for some ${a} \in \ZZ\sp{n}$.
Then $\tilde{f}$ is convex on $B \cap \overline{\dom f}$.
\end{lemma}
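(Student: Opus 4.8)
The plan is to prove the lemma by induction on the dimension $n$, using condition~(b) together with the fact, immediate from the definition of $\tilde{f}$, that $\tilde{f}$ is continuous and convex (piecewise linear) on each of the $2^{n}$ unit subcubes $[b, b+\bm{1}]_{\RR}$ $(b \in \{0,1\}^{n})$ into which $B$ splits. After normalizing $a = \bm{0}$ so that $B = [\bm{0}, 2\bm{1}]_{\RR}$, the base case $n = 1$ is precisely condition~(b): on $[0,2]$ the function $\tilde{f}$ is the piecewise-linear interpolation of $f(0), f(1), f(2)$, and $f(0) + f(2) \geq 2 f(1)$ is (b) for $x = 0$, $y = 2$ (the cases where part of $\{0,1,2\}$ lies outside $\dom f$ are trivial, since a one-dimensional integrally convex domain is an integer interval).

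For the inductive step I would reduce global convexity on $B \cap \overline{\dom f}$ to convexity across the $n$ central hyperplanes $H_{k} = \{x \in \RR^{n} \mid x_{k} = 1\}$. Since $\tilde{f}$ is continuous and convex on every unit subcube, and the only interfaces between subcubes interior to $B$ lie in the $H_{k}$, restricting $\tilde{f}$ to an arbitrary line shows it suffices to verify, for each $k$ and each pair $p, q \in B \cap \overline{\dom f}$ with $p_{k} \leq 1 \leq q_{k}$ and midpoint $m = (p+q)/2$ satisfying $m_{k} = 1$, the single crossing inequality $\tilde{f}(m) \leq \frac{1}{2}(\tilde{f}(p) + \tilde{f}(q))$. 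Indeed such $p, q$ are exactly symmetric neighbours of a breakpoint of the line restriction, and a continuous function that is convex on each side of every breakpoint and satisfies this inequality there is convex.

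Two ingredients feed the crossing inequality. First, a key observation: for any $w$ with $w_{k} = 1$ the integer neighbourhood $N(w)$ contains only points whose $k$-th coordinate equals $1$, so $\tilde{f}$ restricted to $H_{k}$ coincides with the local convex extension of the $(n-1)$-variable restriction of $f$ to $\{x_{k} = 1\}$. This restriction has an integrally convex effective domain and inherits condition~(b), so by the induction hypothesis $\tilde{f}$ is convex on $H_{k} \cap B \cap \overline{\dom f}$. Second, I would write $\tilde{f}(p) = \sum_{i} \lambda_{i} f(u_{i})$ and $\tilde{f}(q) = \sum_{j} \mu_{j} f(v_{j})$ as optimal representations over $u_{i} \in N(p) \cap \dom f$ and $v_{j} \in N(q) \cap \dom f$, where $u_{i,k} \in \{0,1\}$ and $v_{j,k} \in \{1,2\}$. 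Because $p_{k} + q_{k} = 2$ forces the level-$0$ mass among the $u_{i}$ to equal the level-$2$ mass among the $v_{j}$, and likewise the level-$1$ masses to match, one can construct a coupling $\pi_{ij} \geq 0$ with marginals $\lambda, \mu$ supported on pairs with $u_{i,k} + v_{j,k} = 2$. Each such pair has midpoint $(u_{i}+v_{j})/2 \in H_{k} \cap B$ and $\|u_{i} - v_{j}\|_{\infty} \leq 2$; when the distance is $2$, condition~(b) gives $\tilde{f}((u_{i}+v_{j})/2) \leq \frac{1}{2}(f(u_{i})+f(v_{j}))$, while when it is at most $1$ the two points share a unit subcube and the same inequality follows from convexity there. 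Summing against $\pi$ and applying Jensen for the (by induction) convex function $\tilde{f}$ on $H_{k} \cap B \cap \overline{\dom f}$ at $m = \sum_{ij}\pi_{ij}(u_{i}+v_{j})/2$ yields the crossing inequality.

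The hard part is this crossing step: reconciling condition~(b), which controls only integer points at $\ell_{\infty}$-distance exactly $2$ (whose midpoints are integer points of $H_{k}$), with an arbitrary crossing whose midpoint $m$ is a generic point of $H_{k}$. The coupling construction is precisely what bridges this gap, and its validity hinges on the matching of the $k$-level masses of the two corner representations; the induction hypothesis is then exactly what is needed to recombine the per-pair inequalities by Jensen. Minor care is also required for the bookkeeping with $\overline{\dom f}$ (all corners used lie in $\dom f$ and all midpoints in $\overline{\dom f}$, by finiteness of the relevant $\tilde{f}$-values) and for degenerate breakpoints where the line meets several hyperplanes simultaneously, which I would handle by selecting a coordinate $k$ in which the line crosses transversally.
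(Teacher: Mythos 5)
Your proposal is correct, but it takes a genuinely different route from the paper's. The paper proves the lemma by contradiction, with no induction on the dimension: it takes a single point $x \in B \cap \overline{\dom f}$ together with a convex-combination representation $x = \sum_{i} \lambda_{i} y\sp{i}$ violating convexity, applies condition (b) to a pair of generators with $j$-th coordinates $0$ and $2$, and replaces their mass by points of $N\bigl((y\sp{1}+y\sp{2})/2\bigr)$, all of which have $j$-th coordinate $1$; the potential $|I_{j}\sp{0}|+|I_{j}\sp{2}|$ strictly decreases, so sweeping $j=n,n-1,\ldots,1$ eventually puts all generators in $[\bm{0},\bm{1}]_{\ZZ}$, contradicting the definition of $\tilde{f}(x)$. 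You instead induct on $n$, reduce convexity to crossing inequalities at the hyperplanes $H_{k}=\{x \in \RR\sp{n} \mid x_{k}=1\}$ via a one-dimensional gluing argument, and replace the paper's within-one-representation substitution by a coupling between the optimal representations of the two endpoints, recombined by Jensen using the inductive convexity of $\tilde{f}$ on the slice. Both proofs share the same engine — condition (b) applied to integer pairs whose $k$-th coordinates are $0$ and $2$ — but your organization buys modularity: it isolates exactly where (b) enters and produces the slice convexity as a reusable intermediate fact, at the cost of hereditary bookkeeping that the paper's one-pass argument avoids entirely. Specifically, you must verify that $\dom f \cap \{x_{k}=1\}$ is integrally convex in $\ZZ\sp{n-1}$ and inherits (b), and that the midpoints of your $(0,2)$-coupled pairs lie in $\overline{\dom f \cap H_{k}}$ (not merely in $H_{k} \cap \overline{\dom f}$), since that is the set on which the induction hypothesis gives convexity for Jensen; all three facts do hold, each following from the observation $N(w) \subseteq H_{k}$ for $w \in H_{k}$ combined with integral convexity of $\dom f$, but they deserve explicit statement, as your closing remark mentions only membership in $\overline{\dom f}$.
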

\begin{proof}
First, the assumed integral convexity of $\dom f$
implies that 
$B \cap \overline{\dom f} = \overline{B \cap \dom f}$
and that every point in $B \cap \overline{\dom f}$ can be represented 
as a convex combination of points in $B \cap \dom f$.
We may assume $B = [ \bm{0}, 2 \bm{1} ]_{\RR}$.
To prove by contradiction, 
assume that there exist
$x \in B \cap \overline{\dom f}$
and $y\sp{1},\ldots, y\sp{m} \in B \cap \dom f$ such that
\begin{equation} \label{xyifxfyi}
 x =  \sum_{i=1}\sp{m} \lambda_{i} y\sp{i} ,  
\qquad
 \tilde{f}(x) > \sum_{i=1}\sp{m} \lambda_{i} f(y\sp{i}) ,
\end{equation}
where
$\sum_{i=1}\sp{m}  \lambda_{i} = 1$ and 
$\lambda_{i} > 0 \ (i=1,\ldots, m)$.
We may also assume 
$x \in [ \bm{0}, \bm{1} ]_{\RR}$ 
without loss of generality.
For each $j=1,\ldots, n$, we look at the 
$j$-th component of the generating points $y\sp{i}$
to define
\[ 
 I_{j}\sp{0} = \{ i \mid  y\sp{i}_{j} = 0 \},
\qquad
 I_{j}\sp{2} = \{ i \mid  y\sp{i}_{j} = 2 \}.
\] 
Since
$ x_{j} =  \sum_{i=1}\sp{m} \lambda_{i} y\sp{i}_{j} \leq 1$,
if $I_{j}\sp{2} \not= \emptyset$, then
$I_{j}\sp{0} \not= \emptyset$.

Let $j=n$ and suppose that $I_{n}\sp{2} \not= \emptyset$.
Then $I_{n}\sp{0} \not= \emptyset$.
We may assume
$y\sp{1}_{n} = 0$, $y\sp{2}_{n} = 2$; $\lambda_{1} > 0$, $\lambda_{2} > 0$.
By (\ref{intcnvconddist2}) for $(y\sp{1}, y\sp{2})$ 
and the definition of $\tilde{f}$ we have
\[ 
 f(y\sp{1}) + f(y\sp{2})
 \geq 2 \tilde{f}\, \bigg(\frac{y\sp{1} + y\sp{2}}{2} \bigg) 
 = 2 \sum_{k=1}\sp{l} \mu_{k} f(z\sp{k}) ,
\] 
where
\begin{equation} \label{y1y2zk}
\frac{y\sp{1} + y\sp{2}}{2} =  \sum_{k=1}\sp{l}  \mu_{k} z\sp{k},
\qquad
z\sp{k} \in N \bigg( \frac{y\sp{1} + y\sp{2}}{2} \bigg) \cap \dom f
\quad
(k=1,\ldots, l)
\end{equation}
with 
$\mu_{k} > 0$ \  $(k=1,\ldots, l)$ and
$\sum_{k=1}\sp{l}  \mu_{k} = 1$.
This implies, with notation $\lambda = \min(\lambda_{1}, \lambda_{2})$, that 
\[
 \lambda_{1} f(y\sp{1}) + \lambda_{2} f(y\sp{2}) 
\geq
 (\lambda_{1} - \lambda ) f(y\sp{1}) + (\lambda_{2}-\lambda) f(y\sp{2})
 + 2 \lambda \sum_{k=1}\sp{l} \mu_{k} f(z\sp{k}) .
\]
Hence
\[
 \sum_{i=1}\sp{m} \lambda_{i} f(y\sp{i}) 
\geq
 (\lambda_{1} - \lambda ) f(y\sp{1}) + (\lambda_{2}-\lambda) f(y\sp{2})
 + 2 \lambda \sum_{k=1}\sp{l} \mu_{k} f(z\sp{k}) 
+  \sum_{i=3}\sp{m} \lambda_{i} f(y\sp{i}) .
\]
Since  
\[
 x =  (\lambda_{1} - \lambda ) y\sp{1} + (\lambda_{2}-\lambda) y\sp{2}
 + 2 \lambda \sum_{k=1}\sp{l} \mu_{k} z\sp{k}
+  \sum_{i=3}\sp{m} \lambda_{i} y\sp{i} ,
\]
we have obtained another representation of the form 
(\ref{xyifxfyi}).
With reference to this new representation
define
$\hat{I}_{n}\sp{0}$ (resp., $\hat{I}_{n}\sp{2}$)
to be the set of indices of the generators whose $n$-th component is equal to 0 (resp., 2).
Since $z\sp{k}_{n} = 1$ for all $k$ as a consequence of (\ref{y1y2zk})
with
$(y\sp{1}_{n}+ y\sp{2}_{n})/2 = (0 + 2)/2 = 1$,
we have
$\hat{I}_{n}\sp{0} \subseteq I_{n}\sp{0}$,
$\hat{I}_{n}\sp{2} \subseteq I_{n}\sp{2}$
and
$|\hat{I}_{n}\sp{0}| +  |\hat{I}_{n}\sp{2}|  \leq
|I_{n}\sp{0}| +  |I_{n}\sp{2}| - 1$.

By repeating the above process with $j=n$, 
 we eventually arrive at 
a representation of the form of (\ref{xyifxfyi})
with $I_{n}\sp{2} = \emptyset$, which means that 
$y\sp{i}_{n} \in \{ 0,1 \}$ for all generators $y\sp{i}$.

Then we repeat the above process for $j=n-1,n-2, \ldots,1$,
to obtain a representation of the form of (\ref{xyifxfyi})
with $y\sp{i} \in [ \bm{0}, \bm{1} ]_{\ZZ}$
for all generators $y\sp{i}$.
This contradicts the definition of $\tilde{f}$.
\end{proof}

\begin{lemma} \label{LMlineseg}
For any $x, y \in \overline{\dom f}$,
$\tilde{f}$ is convex on the line segment connecting $x$ and $y$.
\end{lemma}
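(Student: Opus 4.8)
The plan is to lift the box-wise convexity established in Lemma~\ref{LMboxD2} to the entire segment, the decisive point being that \emph{every} point of the segment can be caught in the open interior of some size-two integer box, which supplies a genuine two-sided neighborhood along the segment.

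First I would record that, since $\dom f$ is integrally convex, $\overline{\dom f}$ is convex; hence the whole segment connecting $x$ and $y$ lies in $\overline{\dom f}$ and $\tilde{f}$ is finite there. Parametrize the segment by $\phi(t) = (1-t)x + ty$ for $t \in [0,1]$ and set $h(t) = \tilde{f}(\phi(t))$. On each unit hypercube $\tilde{f}$ is the convex envelope of finitely many values, so it is piecewise linear there; consequently $h$ is a piecewise-linear function of $t$ with finitely many breakpoints $0 = t_{0} < t_{1} < \cdots < t_{m} = 1$, because the compact segment meets only finitely many cells. To prove that $h$ is convex it then suffices to verify at each interior breakpoint $t_{k}$ that the left slope of $h$ does not exceed its right slope.

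Fix such a breakpoint $t_{k}$ and put $p = \phi(t_{k})$. I claim there is a box $B = [a, a + 2\bm{1}]_{\RR}$ with $a \in \ZZ^{n}$ containing $p$ in its interior: for each coordinate $j$ the open interval $(p_{j}-2,\, p_{j})$ has length two and hence contains an integer, and taking $a_{j}$ to be such an integer gives $a_{j} < p_{j} < a_{j}+2$. With $p$ in the open box $B$, a relative neighborhood $\phi((t_{k}-\delta,\, t_{k}+\delta))$ of $p$ along the segment is contained in $B \cap \overline{\dom f}$, on which $\tilde{f}$ is convex (hence continuous) by Lemma~\ref{LMboxD2}. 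Therefore $h$ is convex on $(t_{k}-\delta,\, t_{k}+\delta)$, which yields the required slope inequality at $t_{k}$. Since this holds at every interior breakpoint and $h$ is piecewise linear and continuous across each breakpoint by the same local convexity, $h$ is convex on $[0,1]$, i.e.\ $\tilde{f}$ is convex on the line segment connecting $x$ and $y$.

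The main obstacle—and precisely the reason Lemma~\ref{LMboxD2} is stated for boxes of size \emph{two} rather than unit cubes—is the passage across cell boundaries: a breakpoint $p$ typically lies on a facet shared by several unit cubes, so no single unit cube contains a two-sided neighborhood of $p$ along the segment, and convexity on unit cubes alone would fail to glue. Catching $p$ in the interior of a size-two box circumvents this, the only quantitative input being the elementary fact that an open interval of length two always contains an integer. Everything else reduces to the standard observation that a continuous piecewise-linear function on an interval that is convex near each of its finitely many breakpoints is globally convex.
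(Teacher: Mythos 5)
Your proof is correct and takes essentially the same route as the paper: both lift Lemma~\ref{LMboxD2} to the segment by observing that every interior point of the segment can be enclosed in the \emph{interior} of a size-two integer box, which supplies two-sided local convexity of $\tilde{f}$ along the segment. The only difference is the final gluing step: the paper invokes an external local-to-global convexity lemma (citing Tuy), whereas you make the gluing self-contained by noting that $\tilde{f}$ is piecewise linear along the segment and checking the slope inequality at each of the finitely many breakpoints.
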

\begin{proof}
Let $L$ denote the (closed) line segment connecting $x$ and $y$,
and consider the boxes $B$, as in Lemma~\ref{LMboxD2}, that intersect $L$.
There exists a finite number of such boxes,
say,
$B_{1}, \ldots, B_{m}$,
and $L$ is covered by the line segments $L_{j} = L \cap B_{j}$
$(j=1,\ldots, m)$.
That is,  $ L = \bigcup_{j=1}\sp{m} L_{j}$.
For each point $z \in L \setminus \{ x, y \}$,
there exists some $L_{j}$ that contains  $z$ in its interior.
Since $L_{j} \subseteq L \subseteq \overline{\dom f}$,
$\tilde{f}$ is convex on $L_{j}$ by Lemma~\ref{LMboxD2}.  Hence%
\footnote{
See  H. Tuy: D.C. optimization: Theory, methods and algorithms,
in: R. Horst and P. M. Pardalos, eds., {\em Handbook of Global Optimization},
Kluwer Academic Publishers, Dordrecht, 1995, 149--216;
Lemma 2 to be specific.
} 
$\tilde{f}$ is convex on $L$.
\end{proof}

\section{Proof of Proposition \ref{PRhilbertbase}}
\label{SChilbertproof}

It is known (cf.~\cite[proof of Theorem 16.4]{Sch86}) that
the set of integer vectors contained in
\[
 F_{A}  = \left\{ 
 \sum_{i \in A} \mu_{i}^{+}(\unitvec{A} + \unitvec{i}) + 
    \sum_{i \in A} \mu_{i}^{-} (\unitvec{A} - \unitvec{i})
 + \sum_{i \in N \setminus A} \mu_{i}^{\circ} (\unitvec{A}+ \unitvec{i})
 + \lambda \unitvec{A} 
    \:\begin{array}{|l}
      \mu_{i}^{+}, \mu_{i}^{-} \in [0,1]_{\RR}\;(i \in A); \\
      \mu_{i}^{\circ} \in [0,1]_{\RR}\; (i \in N \setminus A) ; \\
      \lambda \in [0,1]_{\RR}
      \end{array}\right\}
\]
forms a Hilbert basis of $\tilde{C}_{A}$.
Let $z$ be an integer vector in $F_{A}$.
That is, $z \in \ZZ^{n}$ and 
\begin{align}
 z &=  
 \sum_{i \in A} \mu_{i}^{+}(\unitvec{A} + \unitvec{i})
 + \sum_{i \in A} \mu_{i}^{-} (\unitvec{A} - \unitvec{i})
   + \sum_{i \in N \setminus A} \mu_{i}^{\circ} (\unitvec{A}+ \unitvec{i})  
   + \lambda \unitvec{A} 
\label{hilbasprf0}
 \\
   &= 
   \sum_{i \in A} (\mu_{i}^{+}-\mu_{i}^{-})\unitvec{i} 
  + \sum_{i \in N \setminus A} \mu_{i}^{\circ} (\unitvec{A} + \unitvec{i}) 
  + \left( \lambda + \sum_{i \in A} (\mu_{i}^{+} + \mu_{i}^{-}) \right) \unitvec{A} 
\label{hilbasprf1}
\end{align}
for some 
$\mu_{i}^{+}, \mu_{i}^{-} \in [0,1]_{\RR}\;(i \in A)$;\; 
$\mu_{i}^{\circ} \in [0,1]_{\RR}\; (i \in N \setminus A)$;\; 
$\lambda \in [0,1]_{\RR}$.
Our goal is to show that $z$ can be represented as 
a nonnegative integer combination of vectors in $B_{A}$.

First note that $\mu_{i}^{\circ} \in \{ 0, 1 \}$ for each $i \in N \setminus A$;
define $A^{\circ} = \{ i \in N \setminus A \mid \mu_{i}^{\circ}=1 \}$.
We denote the coefficient of $\unitvec{A}$ in (\ref{hilbasprf1}) as
\[
\xi = \lambda + \sum_{i \in A} (\mu_{i}^{+} + \mu_{i}^{-}) 
\]
and divide into cases according to whether $\xi$ is an integer or not.

Case 1  ($\xi \in \ZZ$):
Using $\xi$ we rewrite  (\ref{hilbasprf1}) as
\begin{align*}
 z  &= 
   \sum_{i \in A} (\mu_{i}^{+}-\mu_{i}^{-})\unitvec{i}
 + \sum_{i \in N \setminus A} \mu_{i}^{\circ} (\unitvec{A} + \unitvec{i}) 
  +  \xi  \unitvec{A} ,
\end{align*}
in which $\xi$ is an integer.
For each $i \in A$,
$\mu_{i}^{+}-\mu_{i}^{-}$ must be an integer, which is equal to $0$, $1$ or $-1$.
Accordingly we define
\begin{align*}
A^{=} 
& = \{ i \in A \mid \mu_{i}^{+}-\mu_{i}^{-} = 0 \} ,
\\
A^{>} 
& = \{ i \in A \mid \mu_{i}^{+}-\mu_{i}^{-} = 1 \}
  = \{ i \in A \mid \mu_{i}^{+}=1, \ \mu_{i}^{-} = 0 \},
\\
A^{<}
& = \{ i \in A \mid \mu_{i}^{+}-\mu_{i}^{-} = -1 \}
  = \{ i \in A \mid \mu_{i}^{+}=0, \ \mu_{i}^{-} = 1 \}
\end{align*}
to rewrite (\ref{hilbasprf0}) as
\begin{equation}\label{hilbasprf2}
 z =  
 \sum_{i \in A^{>}} (\unitvec{A} + \unitvec{i}) + \sum_{i \in A^{<}} (\unitvec{A} - \unitvec{i})
   + \sum_{i \in A^{\circ}} (\unitvec{A}+ \unitvec{i}) 
+ \left( \lambda 
  + \sum_{i \in A^{=}} (\mu_{i}^{+} + \mu_{i}^{-}) \right)\unitvec{A} .
\end{equation}
Here the coefficient of $\unitvec{A}$ is integral, since
\[
 \lambda + \sum_{i \in A^{=}} (\mu_{i}^{+} + \mu_{i}^{-}) 
 = \xi -  \sum_{i \in A^{>} } 1  - \sum_{i \in A^{<}} 1 .
\]
Hence (\ref{hilbasprf2}) gives a representation of $z$ 
as a nonnegative integer combination of vectors in $B_{A}$.

Case 2  ($\xi \not\in \ZZ$):
Let $\eta$ denote the fractional part of $\xi$, i.e.,
$\eta = \xi - \lfloor \xi \rfloor$ with $0 < \eta < 1$.
We rewrite (\ref{hilbasprf1}) as
\begin{equation}\label{hilbasprf4}
 z  =  
    \sum_{i \in A} (\mu_{i}^{+}-\mu_{i}^{-} + \eta)\unitvec{i}
 + \sum_{i \in N \setminus A} \mu_{i}^{\circ} (\unitvec{A} + \unitvec{i}) 
 + \lfloor \xi \rfloor \unitvec{A}.
\end{equation}
For each $i \in A$,
$\mu_{i}^{+}-\mu_{i}^{-} + \eta$ must be an integer, which is equal to $1$ or $0$.
Accordingly we define
\begin{align*}
A^{+} 
& = \{ i \in A \mid \mu_{i}^{+}-\mu_{i}^{-} + \eta = 1 \} ,
\\
A^{-} 
& = \{ i \in A \mid \mu_{i}^{+}-\mu_{i}^{-} + \eta = 0 \}.
\end{align*}
Then  
\begin{equation*}  
\lfloor \xi \rfloor \geq \min (|A^{+}|, |A^{-}| ),
\end{equation*}
which follows from
\begin{align*}
& \mu_{i}^{+} + \mu_{i}^{-} 
\left\{ 
\begin{array}{ll}
 = 2\mu_{i}^{-} + 1-\eta \geq 1 - \eta & (i \in A^{+}) 
\\
=  2\mu_{i}^{+} + \eta \geq \eta & (i \in A^{-}) , 
\end{array} \right.
\\ &
\xi = \lambda + \sum_{i \in A} (\mu_{i}^{+} + \mu_{i}^{-}) 
\geq  (1 - \eta) |A^{+}| + \eta |A^{-}| 
\geq \min (|A^{+}|, |A^{-}| ).
\end{align*}

In the case of $|A^{+}| \leq |A^{-}|$, we see from (\ref{hilbasprf4}) that
\begin{align*}
z &= \sum_{i \in A^{+}} \unitvec{i} 
\phantom{ (\unitvec{A}+ {}) }  
+ \sum_{i \in A^{\circ}} (\unitvec{A}+ \unitvec{i})
+ \lfloor \xi \rfloor \unitvec{A} 
\\
 &=  
    \sum_{i \in A^{+}} (\unitvec{A} + \unitvec{i}) 
   + \sum_{i \in A^{\circ}} (\unitvec{A}+ \unitvec{i})
  +  (\lfloor \xi \rfloor - |A^{+}|) \unitvec{A} ,
\end{align*}
which is a nonnegative integer combination of vectors in $B_{A}$.
In the other case with $|A^{+}| > |A^{-}|$, we have an alternative expression
\begin{align*}
z &= 
 - \sum_{i \in A^{-}} \unitvec{i}
\phantom{ (\unitvec{A} ) }  
 + \sum_{i \in A^{\circ}} (\unitvec{A}+ \unitvec{i})
 + (\lfloor \xi \rfloor+1) \unitvec{A}
\\
 &=  
    \sum_{i \in A^{-}} (\unitvec{A} - \unitvec{i}) 
   + \sum_{i \in A^{\circ}} (\unitvec{A}+ \unitvec{i})
  + (\lfloor \xi \rfloor + 1 - |A^{-}|) \unitvec{A} ,
\end{align*}
which is also a nonnegative integer combination of vectors in $B_{A}$.
This completes the proof of Proposition \ref{PRhilbertbase}.

\end{document}